\newcommand{\bel}[1]{\begin{equation*}\label{#1}}
	\newcommand{\be}{\begin{equation}}
		\newcommand{\ba}{\begin{eqnarray}}
			\newcommand{\ea}{\end{eqnarray}}
		\newcommand{\qe}{\end{equation}}
	\newcommand{\R}{{\mathbb R}}
	\newcommand{\N}{{\mathbb N}}
	\newcommand{\Z}{{\mathbb Z}}
	\newcommand{\C}{{\mathbb C}}
	\newcommand{\eg}{\begin{example}}
		\newcommand{\egd}{\end{example}}
	\newcommand{\tm}{\begin{thm}}
		\newcommand{\tmd}{\end{thm}}
	\newcommand{\co}{\begin{coro}}
		\newcommand{\cod}{\end{coro}}
	\newcommand{\enu}{\begin{enumerate}}
		\newcommand{\enud}{\end{enumerate}}
	\newcommand{\rmk}{\begin{rem}}
		\newcommand{\rmkd}{\end{rem}}
	\theoremstyle{theorem}
	\newtheorem{thm}{Theorem}[section]
	\newtheorem{prop}[thm]{Proposition}
	\theoremstyle{example}
	\newtheorem{example}[thm]{Example}
	\newtheorem{coro}[thm]{Corollary}
	\theoremstyle{lemma}
	\newtheorem{lemma}[thm]{Lemma}
	\theoremstyle{definition}
	\newtheorem{defi}[thm]{Definition}
	\theoremstyle{proof}
	\theoremstyle{remark}
	\newtheorem{rem}[thm]{Remark}
	\theoremstyle{remark}
\begin{document}

		\title[The well-posedness and scattering theory of nonlinear Schr\"{o}dinger equations on lattice graphs]{The well-posedness and scattering theory of nonlinear Schr\"{o}dinger equations on lattice graphs }

		\author{Jiajun Wang}
		\address{Jiajun Wang: School of Mathematical Sciences,
			Fudan University, Shanghai 200433, China.}
		\email{21300180146@m.fudan.edu.cn}
		
		\begin{abstract}
		In this paper, we introduce a novel first-order derivative for functions on a lattice graph, which extends the discrete Laplacian and generalizes the theory of discrete PDEs on lattices. First, we establish the well-posedness of generalized discrete quasilinear Schr\"{o}dinger equations and give a new proof of the global well-posedness of discrete semilinear Schr\"{o}dinger equations. Then we provide explicit expressions of higher-order derivatives of the solution map and prove the analytic dependence between the solution and the initial data. At the end, we show the existence of the wave operator and prove the asymptotic completeness of the solutions with the small data.
		\end{abstract}

		\maketitle
		\numberwithin{equation}{section}
		\section{Introduction}
		The discrete nonlinear Schr\"{o}dinger equation (DNLS) is an important model. In physics, the DNLS has strong connections with amorphous materials \cite{11,12} and discrete self-trapped beams \cite{13,14}. In mathematics, the importance of the DNLS is reflected in two aspects. The first aspect is that it's a direct analogue of the nonlinear Schr\"{o}dinger equation (NLS), which possesses a substantial amount of interesting theories, like the scattering and soliton theories\cite{17,18,19,15,16}, and has a wide range of applications in describing water waves \cite{20,21} and polarized waves in optical ﬁbers \cite{22,23}. Another aspect is that an increasing number of mathematicians are engaged in the study of the DNLS. For example, R.Grande studies the continuum limit of discrete fractional semilinear Schr\"{o}dinger equations in 1 dimension \cite{24}. B.Choi and A.Aceves extend it to 2 dimension, with sharp dispersive
		estimates \cite{25}.
		\par 
		In this paper, we first introduce the following novel first-order derivative, which we call the discrete partial derivative. 
		\begin{defi}
			The discrete partial derivative $\partial_{j}u$ for $u\in \bigcup_{0<p\le \infty}\ell^{p}(\Z^{d})$  is defined by the convolution operator $u\ast \varphi_{j}$, where $\varphi_{j}$ is given by 
			\begin{equation*}
				\varphi_{j}(m) :=\left\{
				\begin{array}{ll}
					\frac{-4i}{\pi (4a^{2}-1)}, \ & \mathrm{if}\  m=a e_j, a\in \Z,\\
					0,\ &\mathrm{otherwise,}
				\end{array}\right.
			\end{equation*}
			where $\lbrace e_{j} \rbrace_{j=1}^{d}$ is the standard coordinate basis of $\Z^{d}$.
		\end{defi}
		For the motivation of this definition, we shall briefly recall the discrete Fourier transform $\mathcal{F}$ and its inverse $\mathcal{F}^{-1}$. 
		\begin{defi}
			For $u\in \ell^1(\Z^d)$ and $g\in L^{1}(\mathbb{T}^{d})$, the discrete Fourier transform $\mathcal{F}$ and the inverse discrete Fourier transform $\mathcal{F}^{-1}$ are defined as
			\begin{equation*}
				\mathcal{F}(u)(x):=\sum_{k\in \Z^{d}}u(k)e^{-ikx}, \quad \forall x\in \mathbb{T}^{d},
			\end{equation*}
			\begin{equation*}
				\mathcal{F}^{-1}(g)(k):=\frac{1}{(2\pi)^{d}}\int_{\mathbb{T}^d} g(x)e^{ikx}dx, \quad \forall k\in \Z^{d},
			\end{equation*}
			where $\mathbb{T}^{d}$ is the $d$-dimensional torus parametrized by $[0,2\pi)^d$. Besides, the discrete Fourier transform $\mathcal{F}$ can be extended to an isometric isomorphism between $\ell^{2}(\Z^{d})$ and $L^{2}(\mathbb{T}^{d})$.
		\end{defi} 
		Then we point out the connection between the discrete partial derivative $\partial_{j}$ and the discrete Laplacian $\Delta$, see Proposition 2.1 in \cite{26}.
		\begin{prop}
			For any $u\in C_0(\Z^d)$,
			\begin{equation*}
				\partial_{j}u= \mathcal{F}^{-1}\left(2i\cdot \sin(\frac{x_{j}}{2})\mathcal{F}(u)(x)\right),\quad     j=1,\cdots ,d,
			\end{equation*} 
			where $x=(x_{1},\cdots, x_{d})$, $C_0(\Z^d)$ denotes the set of finitely supported functions on $\Z^{d}$. In particular, we have $\Delta=\sum_{j=1}^{d}\partial_{j}\circ\partial_{j}$.
		\end{prop}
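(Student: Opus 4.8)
The plan is to pass to the Fourier side and read off the multiplier attached to $\partial_j$. Since $u\in C_0(\Z^d)$ we have $u\in\ell^1(\Z^d)$, and a direct estimate shows $\varphi_j\in\ell^1(\Z^d)$ because $\sum_{a\in\Z}|4a^2-1|^{-1}<\infty$; hence $u\ast\varphi_j$ is a well-defined element of $\ell^1(\Z^d)$ and the convolution theorem yields $\mathcal{F}(\partial_j u)=\mathcal{F}(u)\cdot\mathcal{F}(\varphi_j)$ as continuous functions on $\T^d$. Everything therefore reduces to identifying the symbol $\mathcal{F}(\varphi_j)(x)=\sum_{a\in\Z}\tfrac{-4i}{\pi(4a^2-1)}e^{-iax_j}$.

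Rather than resumming the series directly, I would compute $\mathcal{F}^{-1}\bigl(2i\sin(x_j/2)\bigr)$ and verify that it equals $\varphi_j$. The inverse transform factorizes over the coordinates: for $\ell\neq j$ the factor $\tfrac{1}{2\pi}\int_0^{2\pi}e^{ik_\ell x_\ell}\,dx_\ell=\delta_{k_\ell,0}$ forces the output to be supported on the $e_j$-axis, and along that axis one is left with $\tfrac{1}{2\pi}\int_0^{2\pi}2i\sin(x/2)e^{iax}\,dx$. Writing $2i\sin(x/2)=e^{ix/2}-e^{-ix/2}$ and integrating the two exponentials $e^{i(a\pm 1/2)x}$ over $[0,2\pi)$ (note $a\pm\tfrac12\neq 0$), then collecting over a common denominator, produces exactly $\tfrac{-4i}{\pi(4a^2-1)}$. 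Thus $\mathcal{F}^{-1}(2i\sin(x_j/2))=\varphi_j$; since $2i\sin(x_j/2)$ lies in $L^2(\T^d)$ this shows $\mathcal{F}(\varphi_j)(x)=2i\sin(x_j/2)$, and combining with the previous paragraph, $\partial_j u=\mathcal{F}^{-1}\bigl(2i\sin(x_j/2)\,\mathcal{F}(u)\bigr)$.

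The one point requiring care is that the multiplier is $2i\sin(x_j/2)$ and not $2i|\sin(x_j/2)|$: this is precisely where the parametrization $\T^d=[0,2\pi)^d$ enters, since for $x_j\in[0,2\pi)$ one has $x_j/2\in[0,\pi)$ and hence $\sin(x_j/2)\ge 0$. (Equivalently, one can recognize the series above as $2i$ times the classical, uniformly convergent Fourier series of $t\mapsto|\sin(t/2)|$, whose coefficients are $O(a^{-2})$, so convergence and continuity are automatic.) I do not expect any serious obstacle here; the only genuine computation is the elementary integral.

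For the final assertion, iterate the formula. Since $\partial_j u\in\ell^1(\Z^d)$ it lies in the domain of $\partial_j$, and by associativity of convolution together with the convolution theorem, $\mathcal{F}(\partial_j\circ\partial_j u)=\bigl(2i\sin(x_j/2)\bigr)^2\mathcal{F}(u)=-4\sin^2(x_j/2)\,\mathcal{F}(u)=(2\cos x_j-2)\,\mathcal{F}(u)$. Summing over $j$, $\mathcal{F}\bigl(\sum_{j=1}^d\partial_j\circ\partial_j u\bigr)=\bigl(2\sum_{j=1}^d\cos x_j-2d\bigr)\mathcal{F}(u)$, which is exactly $\mathcal{F}(\Delta u)$ for the discrete Laplacian $\Delta u(k)=\sum_{j=1}^d\bigl(u(k+e_j)+u(k-e_j)-2u(k)\bigr)$. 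Since $\mathcal{F}$ is injective on $\ell^1(\Z^d)\subset\ell^2(\Z^d)$, this gives $\Delta=\sum_{j=1}^d\partial_j\circ\partial_j$ on $C_0(\Z^d)$.
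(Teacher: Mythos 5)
Your proof is correct. The paper itself gives no argument for this proposition (it only cites Proposition 2.1 of \cite{26}), and your computation — verifying $\mathcal{F}^{-1}\bigl(2i\sin(x_j/2)\bigr)=\varphi_j$ via the elementary integral $\frac{1}{2\pi}\int_0^{2\pi}\bigl(e^{i(a+1/2)x}-e^{i(a-1/2)x}\bigr)\,dx=\frac{-4i}{\pi(4a^2-1)}$, invoking the $\ell^1$ convolution theorem, and then squaring the symbol to get $-4\sin^2(x_j/2)=2\cos x_j-2$ for the Laplacian — is exactly the standard argument the cited reference carries out, including the correct attention to $\sin(x_j/2)\ge 0$ on the chosen parametrization of $\T^d$.
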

		\begin{rem}
			We recall that the standard difference operator $D_{j}$ is defined as 
			\begin{equation*}
				D_j u(m)=u(m+e_j)-u(m),\quad u:\Z^d\to \C,\; m\in \Z^d.
			\end{equation*}
			However, direct calculation shows that $\Delta\ne \sum_{j=1}^{d} D_{j}\circ D_{j}$, which is one of the major reasons for introducing the discrete partial derivative $\partial_{j}$. Besides, in some sense, the standard difference operator $D_{j}$ and the discrete partial derivative $\partial_{j}$ are equivalent. To be more concrete, we have
			\begin{equation*}
				\|\partial_{j}u\|_{\ell^{p}(\Z^{d})}\approx \|D_{j}u\|_{\ell^{p}(\Z^{d})},\quad	\|\partial u\|_{\ell^{p}(\Z^{d})}\approx \|Du\|_{\ell^{p}(\Z^{d})}, \quad 1<p<\infty,
			\end{equation*}
			where $\partial=(\partial_{1},\partial_{2},\cdots,\partial_{d})$, $D=(D_{1},D_{2},\cdots,D_{d})$. For more details, we refer to Theorem 2.6 in \cite{26} or \cite{27,28}.
		\end{rem}
		\begin{rem}
			The discrete partial derivative $\partial_{j}$ has a direct connection with the famous fractional discrete Laplacian $(-\Delta)^{\frac{\alpha}{2}}$, which is extensively studied, for example, in \cite{25,29}.
		\end{rem}
		Before we present our main results, we shall briefly introduce the definition of the weighted $\ell^{p}(\Z^{d})$ space.
		\begin{defi}
			For $0<p\le \infty$, $\alpha\in \R$, 
			the $\ell^{p,\alpha}$ norm is defined as
			$$\|f\|_{\ell^{p,\alpha}(\Z^{d})}:=\|f_\alpha\|_{\ell^p(\Z^d)},$$ where $f_\alpha(m):=f(m)\langle m \rangle^\alpha, m\in\Z^d.$ The definition of $\langle m \rangle$ refers to the end of this introduction. We write
			\begin{equation*}
				\ell^{p,\alpha}(\Z^{d}):=\lbrace f:\Z^d\to \C; \|f\|_{\ell^{p,\alpha}(\Z^{d})}<\infty \rbrace.
			\end{equation*}
		\end{defi}
		\begin{rem}\label{ee}
			Notice that, $\ell^{2,s}(\Z^{d})$ space has an isomorphism with the Sobolev $H^{s}(\mathbb{T}^{d})$ space, when $s\in \N$, see Theorem 2.10 in  \cite{26}. 
		\end{rem}
		Now, in the regime of $\ell^{2,s}(\Z^{d})$ space, we prove the local well-posedness of  generalized discrete quasilinear Schr\"{o}dinger equations and give a new proof of the global well-posedness of discrete semilinear  Schr\"{o}dinger equations.
		\begin{thm}\label{1}
			For the following generalized discrete quasilinear Schr\"{o}dinger equation
			\begin{equation}\label{nonlinear}
				\left\{
				\begin{aligned}
					& i\partial_{t} u(x,t)+ g^{jk}(u,\partial u)\partial_{jk} u(x,t) =F(u,\partial u),  \\
					& u(x,0) = u_{0}(x), \quad (x,t)\in\Z^d\times \R,
				\end{aligned}
				\right.
			\end{equation}
			where we follow the Einstein's summation convention, with $\partial_{jk}=\partial_{j}\circ\partial_{k}$. Then if $g^{jk}\in C^{1}(\mathbb{R}\times \mathbb{R}^{d})$, $F\in C^{1}(\mathbb{R}\times \mathbb{R}^{d})$, $F(0,0)=0$ and the initial data $u_{0}\in \ell^{2,s}(\Z^{d})$, $s\in [0,1]$, then there exists $T>0$ and the equation (\ref{nonlinear}) has a unique classical solution $u\in C^{1}([0,T];\ell^{2,s}(\Z^{d}))$. Furthermore, we have the continuation criterion that if the maximal existence time $T^{\ast}$ is finite, then $\|u(\cdot,t)\|_{\ell^{\infty}(\Z^{d})}$ is unbounded in $\left[0,T^{\ast}\right)$.
		\end{thm}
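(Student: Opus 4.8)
The plan is to run a contraction-mapping (Banach fixed point) argument in the space $X_T := C([0,T];\ell^{2,k}(\Z^d))$, exploiting the crucial structural fact that on $\Z^d$ the operators $\partial_j$ are bounded on every $\ell^{p,\alpha}(\Z^d)$ (they are convolutions with the fixed kernels $\varphi_j$, and one checks $\varphi_j\in\ell^1$, indeed $\sum_a |4/(\pi(4a^2-1))|<\infty$; the weight $\langle m\rangle^\alpha$ only costs a constant depending on the finite "spread" of $\varphi_j$ along $e_j$). Consequently $\Delta$ and each $\partial_{jk}$ are bounded operators on $\ell^{2,k}(\Z^d)$, and the discrete setting makes the quasilinear equation behave, for fixed-point purposes, essentially like an ODE in the Banach space $\ell^{2,k}(\Z^d)$. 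This is the reason one can afford the rough nonlinearity $g^{jk},F\in C^1$ with only $\ell^{2,k}$ data, with no smoothing, no dispersive estimates, and no loss of derivatives.

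First I would recast \eqref{nonlinear} in Duhamel/integral form. Write $L_u := g^{jk}(u,\partial u)\,\partial_{jk}$, a bounded linear operator on $\ell^{2,k}$ whose operator norm is controlled by $\|g^{jk}(u,\partial u)\|_{\ell^\infty}$ (multiplication by an $\ell^\infty$ sequence is bounded on $\ell^{2,k}$, and $\partial_{jk}$ is bounded); since $g^{jk}\in C^1$ and $u,\partial u\in\ell^{2,k}\subset\ell^\infty$, this norm is bounded on balls of $X_T$. Similarly $F(u,\partial u)\in\ell^{2,k}$ because $F(0,0)=0$ and $F\in C^1$ gives the pointwise bound $|F(u,\partial u)(m)|\lesssim |u(m)|+|\partial u(m)|$ on bounded sets, so $\|F(u,\partial u)\|_{\ell^{2,k}}\lesssim \|u\|_{\ell^{2,k}}+\|\partial u\|_{\ell^{2,k}}\lesssim \|u\|_{\ell^{2,k}}$. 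Define the map $\Phi(u)(t) := u_0 - i\int_0^t \bigl(L_{u(s)}u(s) - F(u(s),\partial u(s))\bigr)\,ds$ (or, to get the classical-solution regularity cleanly, freeze the coefficient and use the propagator $e^{-i(t-s)L}$ generated by a fixed bounded self-... — but since everything is bounded, the naive Picard iteration above already suffices). I would show $\Phi$ maps the ball $B_R := \{u\in X_T:\|u\|_{X_T}\le R\}$ with $R:=2\|u_0\|_{\ell^{2,k}}$ into itself and is a contraction there, for $T$ small depending only on $R$ and the $C^1$-norms of $g^{jk},F$ on the relevant compact set. The self-map bound is immediate from the estimates above ($\|\Phi(u)-u_0\|_{X_T}\le T\,C(R)$). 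For the contraction estimate one subtracts: $L_{u}u-L_{v}v = g^{jk}(u,\partial u)\partial_{jk}(u-v) + \bigl(g^{jk}(u,\partial u)-g^{jk}(v,\partial v)\bigr)\partial_{jk}v$, and the first term is bounded by $C(R)\|u-v\|_{\ell^{2,k}}$ while the second uses the mean value theorem together with $\|g^{jk}(u,\partial u)-g^{jk}(v,\partial v)\|_{\ell^\infty}\lesssim_{R}\|u-v\|_{\ell^\infty}+\|\partial u-\partial v\|_{\ell^\infty}\lesssim_R \|u-v\|_{\ell^{2,k}}$ and $\partial_{jk}v\in\ell^{2,k}$; the $F$-difference is handled the same way. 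Hence $\|\Phi(u)-\Phi(v)\|_{X_T}\le T\,C(R)\|u-v\|_{X_T}$, and choosing $T<1/C(R)$ gives a unique fixed point in $B_R$, which is the unique solution; uniqueness in all of $X_T$ (not just $B_R$) follows by a standard Gronwall argument on $\|u-v\|_{\ell^{2,k}}$. That $u\in C^1([0,T];\ell^{2,k})$ is then read off from the integral equation, since the integrand $t\mapsto L_{u(t)}u(t)-F(u(t),\partial u(t))$ is continuous into $\ell^{2,k}$ (continuity of $t\mapsto u(t)$ composed with the locally Lipschitz maps just estimated).

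For the blow-up criterion I would argue by contradiction: suppose $T^*<\infty$ but $M:=\sup_{t\in[0,T^*)}\|u(t)\|_{\ell^\infty(\Z^d)}<\infty$. The key point is that on $\Z^d$ the $\ell^\infty$ norm controls the coefficient: $\|g^{jk}(u(t),\partial u(t))\|_{\ell^\infty}$ and the pointwise Lipschitz constants of $F$ are bounded in terms of $\sup\bigl(|u(t)(m)|+|\partial u(t)(m)|\bigr)$, and $\|\partial u(t)\|_{\ell^\infty}=\|u(t)\ast\varphi_j\|_{\ell^\infty}\le\|\varphi_j\|_{\ell^1}\|u(t)\|_{\ell^\infty}\lesssim M$ by Young's inequality. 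So along $[0,T^*)$ the operator norm of $L_{u(t)}$ on $\ell^{2,k}$ and the growth constant in $\|F(u,\partial u)\|_{\ell^{2,k}}\le C(M)\|u\|_{\ell^{2,k}}$ are uniformly bounded by some $C(M)$. Taking the $\ell^{2,k}$ inner product of the equation with $u$ (or differentiating $\|u(t)\|_{\ell^{2,k}}^2$ and bounding the right-hand side by $2C(M)\|u(t)\|_{\ell^{2,k}}^2$, noting the term $\langle -iL_u u,u\rangle$ is also $\le C(M)\|u\|_{\ell^{2,k}}^2$ in absolute value — we do not need it to be purely imaginary, only bounded) yields $\frac{d}{dt}\|u(t)\|_{\ell^{2,k}}^2 \le 2C(M)\|u(t)\|_{\ell^{2,k}}^2$, so by Gronwall $\|u(t)\|_{\ell^{2,k}}\le e^{C(M)T^*}\|u_0\|_{\ell^{2,k}}$ stays bounded as $t\uparrow T^*$. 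But then the local existence time from the fixed-point step depends only on this uniform bound, so the solution extends past $T^*$, contradicting maximality. The main obstacle — and the place the discrete structure does the real work — is precisely this step of closing the a priori $\ell^{2,k}$ estimate for a \emph{quasilinear} equation: on $\R^d$ one would need the principal part to be (skew-)self-adjoint to avoid derivative loss, whereas here $\partial_{jk}$ is a bounded operator so the commutator/symmetrization difficulties evaporate and a crude Gronwall inequality suffices; I would make sure the bound $\|g^{jk}(u,\partial u)\partial_{jk}u\|_{\ell^{2,k}}\le C(\|u\|_{\ell^\infty})\|u\|_{\ell^{2,k}}$ is stated carefully, since it is the engine of both the contraction and the continuation criterion.
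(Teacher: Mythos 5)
Your proposal is correct in substance but follows a genuinely different route from the paper. The paper never sets up a fixed point for the full quasilinear map: it first proves existence for the \emph{linear} variable-coefficient equation (\ref{generalized}) by a duality/Hahn--Banach argument (Theorem \ref{foundation}), then runs a frozen-coefficient iteration $i\partial_t u_m + g^{jk}(u_{m-1},\partial u_{m-1})\partial_{jk}u_m = F(u_{m-1},\partial u_{m-1})$, establishes uniform bounds via the energy estimate of Theorem \ref{energy estimate2}, and shows the iterates are Cauchy through the telescoping bound $C_m(t)\le C\int_0^t C_{m-1}(s)\,ds$, i.e.\ $C_m(t)\lesssim (Ct)^m/m!$. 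You instead treat the principal part as part of the nonlinearity and contract directly in $C([0,T];\ell^{2,k}(\Z^d))$, which is legitimate precisely because $\partial_{jk}$ is bounded on $\ell^{2,k}(\Z^d)$ — the same fact (Theorem 2.14 of [26]) that powers the paper's energy estimates. Your argument is shorter and dispenses with Theorem \ref{foundation} entirely; the paper's linearize-then-iterate scheme is the one that survives in the continuum, where the principal part cannot be absorbed into the Lipschitz nonlinearity, and it also produces the linear solvability result as a standalone tool. Your treatment of the continuation criterion (control $\|g^{jk}(u,\partial u)\|_{\ell^\infty}$ and the Lipschitz constant of $F$ by $\|u\|_{\ell^\infty}$ via Young's inequality for $\partial_j u = u\ast\varphi_j$, then close a Gronwall inequality for $\|u\|_{\ell^{2,k}}$) matches the paper's Step 4.

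One justification in your write-up is wrong even though the conclusion it supports is true: you say the weight $\langle m\rangle^\alpha$ "only costs a constant depending on the finite spread of $\varphi_j$," but $\varphi_j$ is supported on the entire line $\Z e_j$, and the crude weighted Young bound would require $\sum_a |\varphi_j(ae_j)|(1+|a|)<\infty$, which fails (the summand is of order $1/|a|$). The boundedness of $\partial_j$ on $\ell^{2,1}(\Z^d)$ does hold — e.g.\ because on the Fourier side it is multiplication by $2i\sin(x_j/2)$, a Lipschitz function on $\T^d$, hence bounded on $H^1(\T^d)\cong\ell^{2,1}(\Z^d)$ — and the paper simply cites Theorem 2.14 of [26] for it; you should do the same or give the Fourier-side argument rather than the finite-support one. (There is also a harmless sign slip in your Duhamel map: from $i\partial_t u + g^{jk}\partial_{jk}u=F$ one gets $\Phi(u)(t)=u_0+i\int_0^t(L_{u(s)}u(s)-F(u(s),\partial u(s)))\,ds$.)
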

		\begin{thm}\label{2}
			For the following discrete semilinear  Schr\"{o}dinger equation
			\begin{equation}\label{NLS}
				\left\{
				\begin{aligned}
					& i\partial_{t} u(x,t)+ \dfrac{1}{2}\Delta u(x,t) =\mu|u|^{p-1}u,\;  \mu=\pm 1,\; p>1,\\
					& u(x,0) = u_{0}(x), \quad (x,t)\in\Z^d\times \R,
				\end{aligned}
				\right.
			\end{equation}
			where the case $\mu=-1$ is called the focusing case and $\mu=1$ is called the defocusing case, if $u_{0}\in \ell^{2,s}(\Z^{d})$, $s\in [0,1]$, then we have a unique classical solution $u\in C^{1}(\left[0,+\infty\right);\ell^{2,s}(\Z^{d}))$.
		\end{thm}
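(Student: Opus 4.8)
The plan is to read \eqref{NLS} as an ordinary differential equation in the Hilbert space $\ell^{2,k}(\Z^{d})$, exploiting the fact that on the lattice the discrete Laplacian is a \emph{bounded} operator: local well-posedness then reduces to a routine Picard--Lindel\"of argument, and the global statement follows from an a priori bound obtained by differentiating $\|u(t)\|_{\ell^{2,k}}^{2}$. No dispersive estimate is needed, which is why the conclusion holds for all $p>1$ in both the focusing and the defocusing case.

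First I would record the two functional-analytic facts that make the ODE approach work. (i) $\Delta$ is bounded on $\ell^{2,k}(\Z^{d})$ for $k=0,1$: indeed $\Delta u$ is a finite linear combination of the shifts $u(\cdot\pm e_{j})$ and of $u$, and since $\langle m\rangle\approx\langle m\pm e_{j}\rangle$ the shift operators are bounded on $\ell^{2,k}$. (ii) The nonlinearity $N(u):=-i\mu|u|^{p-1}u$ maps $\ell^{2,k}(\Z^{d})$ into itself and is Lipschitz on bounded sets: using $\ell^{2,k}(\Z^{d})\hookrightarrow\ell^{\infty}(\Z^{d})$ together with the elementary inequality $\big||a|^{p-1}a-|b|^{p-1}b\big|\le C_{p}(|a|^{p-1}+|b|^{p-1})|a-b|$ for $p>1$ (the mean value inequality for $z\mapsto|z|^{p-1}z$ on $\C$, whose real derivative is $O(|z|^{p-1})$ precisely when $p>1$), one gets $\|N(u)-N(v)\|_{\ell^{2,k}}\le C_{p}(\|u\|_{\ell^{\infty}}^{p-1}+\|v\|_{\ell^{\infty}}^{p-1})\|u-v\|_{\ell^{2,k}}$. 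Hence $u\mapsto\tfrac{i}{2}\Delta u+N(u)$ is locally Lipschitz from $\ell^{2,k}$ to $\ell^{2,k}$, and the Banach-space Cauchy--Lipschitz theorem yields a unique maximal classical solution $u\in C^{1}([0,T^{*});\ell^{2,k}(\Z^{d}))$ with the blow-up alternative $T^{*}=+\infty$ or $\|u(t)\|_{\ell^{2,k}}\to\infty$ as $t\uparrow T^{*}$. (Alternatively, the local theory is essentially a special case of Theorem \ref{1}.)

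To globalize, I would differentiate the squared norm. With $\langle f,g\rangle_{k}:=\sum_{m}\langle m\rangle^{2k}f(m)\overline{g(m)}$, the solution being $C^{1}$ gives $\tfrac{d}{dt}\|u(t)\|_{\ell^{2,k}}^{2}=2\,\mathrm{Re}\,\langle\tfrac{i}{2}\Delta u+N(u),u\rangle_{k}$. The nonlinear contribution vanishes for free by gauge invariance: $\langle N(u),u\rangle_{k}=-i\mu\sum_{m}\langle m\rangle^{2k}|u(m)|^{p+1}\in i\R$, so its real part is $0$. The linear contribution is controlled by Cauchy--Schwarz and (i): $\big|2\,\mathrm{Re}\,\langle\tfrac{i}{2}\Delta u,u\rangle_{k}\big|=\big|\mathrm{Im}\,\langle\Delta u,u\rangle_{k}\big|\le\|\Delta\|_{\mathcal{L}(\ell^{2,k})}\|u\|_{\ell^{2,k}}^{2}$. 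Thus $\tfrac{d}{dt}\|u(t)\|_{\ell^{2,k}}^{2}\le C\|u(t)\|_{\ell^{2,k}}^{2}$, and Gr\"onwall gives $\|u(t)\|_{\ell^{2,k}}^{2}\le e^{Ct}\|u_{0}\|_{\ell^{2,k}}^{2}$, finite on every bounded interval; this is incompatible with finite-time blow-up, so $T^{*}=+\infty$. (When $k=0$ the term $\langle\Delta u,u\rangle_{\ell^{2}}$ is actually real by self-adjointness of $\Delta$, so one even recovers the exact conservation of mass $\|u(t)\|_{\ell^{2}}=\|u_{0}\|_{\ell^{2}}$, and one could instead conclude via the $\ell^{\infty}$ continuation criterion of Theorem \ref{1}.)

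The point requiring care — rather than a genuine obstacle, since boundedness of $\Delta$ removes the dispersive analysis of the continuum theory — is twofold: that the weight $\langle m\rangle^{2k}$ destroys neither the cancellation of the nonlinear term (it does not, since $|u(m)|^{p-1}u(m)\overline{u(m)}=|u(m)|^{p+1}$ is real irrespective of the weight) nor the boundedness of $\Delta$; and that for $1<p<2$ the nonlinearity $z\mapsto|z|^{p-1}z$ is merely $C^{1}$ and not smooth, so the Lipschitz estimate must come from the mean value inequality with the bound $\|D(|z|^{p-1}z)\|\le C_{p}|z|^{p-1}$ rather than from a Taylor expansion.
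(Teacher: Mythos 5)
Your argument is correct, but it takes a genuinely different and more self-contained route than the paper's. The paper's proof of Theorem \ref{2} is two lines long: it invokes the continuation criterion established in Theorem \ref{1} (if the maximal time $T^{\ast}$ is finite then $\|u(\cdot,t)\|_{\ell^{\infty}(\Z^{d})}$ must be unbounded) and then notes that the mass conservation of Theorem \ref{conservation} keeps $\|u(\cdot,t)\|_{\ell^{2}(\Z^{d})}$, hence $\|u(\cdot,t)\|_{\ell^{\infty}(\Z^{d})}$, bounded for all time --- which is precisely the alternative you mention parenthetically at the end of your globalization step. You instead rebuild the local theory from scratch as a Banach-space ODE, which is legitimate because $\Delta$ is a finite combination of shifts and hence bounded on $\ell^{2,k}(\Z^{d})$ and the nonlinearity is locally Lipschitz via $\ell^{2,k}(\Z^{d})\hookrightarrow\ell^{\infty}(\Z^{d})$, and you globalize by differentiating $\|u(t)\|_{\ell^{2,k}(\Z^{d})}^{2}$ directly: the nonlinear term cancels by gauge invariance regardless of the weight, and the failure of self-adjointness of $\Delta$ in the weighted inner product is absorbed by Cauchy--Schwarz and Gr\"onwall. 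What this buys is independence from the iteration machinery of Section 3 and the conservation laws of Section 2, together with a one-stroke bound on the weighted norm for $k=1$ rather than a detour through the $\ell^{\infty}$ criterion; what it gives up is exact conservation (you only get $e^{Ct}$ growth of $\|u(t)\|_{\ell^{2,1}(\Z^{d})}$, though as you observe the $k=0$ case is exactly conservative). Your attention to the mere $C^{1}$ regularity of $z\mapsto|z|^{p-1}z$ when $1<p<2$ is well placed and is not made explicit in the paper.
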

		Besides, we also give an explicit expression of the higher order derivatives of the solution map.
		\begin{thm}\label{3}
			For any $T>0$, the map  $\Phi:\ell^{2,s}(\Z^{d}) \to C([0,T];\ell^{2,s}(\Z^{d}))$, $s\in[0,1]$, is defined as the solution map of the equation (\ref{NLS}), i.e. for any  $u_{0}\in \ell^{2,s}(\Z^{d})$, $\Phi(u_{0})$ satisfies 
			\begin{equation*}
				\left\{
				\begin{aligned}
					& i\partial_{t} \Phi(u_{0})(x,t)+ \dfrac{1}{2}\Delta \Phi(u_{0})(x,t) =\mu|\Phi(u_{0})|^{p-1}\Phi(u_{0}), \;  \mu=\pm 1,\; p>1, \\
					& \Phi(u_{0})(x,0) = u_{0}(x), \quad (x,t)\in\Z^d\times \R.
				\end{aligned}
				\right.
			\end{equation*}
			If $p$ is a positive odd integer, then $\Psi^{k}$, see Definition \ref{higher order}, is the $k^{th}$-order Fr\'{e}chet derivative of $\Phi$ and, for any fixed $u_{0}\in \ell^{2,s}(\Z^{d})$, $\Psi_{u_{0}}^{k}=\Psi^{k}(u_{0})$ is a bounded $k$-linear map from $\underbrace{\ell^{2,s}(\Z^{d})\times\cdots \times \ell^{2,s}(\Z^{d})}_{k\; times}$ to $C([0,T];\ell^{2,s}(\Z^{d}))$.
		\end{thm}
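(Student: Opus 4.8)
The plan is to exploit that, for $p$ an odd integer, the nonlinearity is a genuine \emph{polynomial} map, so that smoothness of the solution map follows from an implicit–function/fixed–point argument, and the explicit formula of Definition~\ref{higher order} is then forced by differentiating the Duhamel identity.

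\textbf{Step 1: the nonlinearity is a bounded polynomial map.} Write $p=2\ell+1$, so $\mu|u|^{p-1}u=\mu\,u^{\ell+1}\overline u^{\,\ell}=:N(u)$. Since $\ell^{2}(\Z^{d})\hookrightarrow\ell^{\infty}(\Z^{d})$ one has $\|fg\|_{\ell^{2,m}(\Z^{d})}\le\|g\|_{\ell^{\infty}(\Z^{d})}\|f\|_{\ell^{2,m}(\Z^{d})}\le\|f\|_{\ell^{2,m}(\Z^{d})}\|g\|_{\ell^{2,m}(\Z^{d})}$ for $m\in\{0,1\}$, so $\ell^{2,m}(\Z^{d})$ is a Banach algebra, and $N$ is, as a map of real Banach spaces, a diagonal restriction of a bounded real $p$-linear form (complex conjugation being a bounded $\R$-linear isometry), hence a polynomial of degree $p$, hence $C^{\infty}$; its $j$-th derivative $N^{(j)}(u)[\cdot,\dots,\cdot]$ is a symmetrized finite sum of monomials obtained by replacing $j$ of the $p$ factors $u$ or $\overline u$ by the corresponding argument or its conjugate, a bounded symmetric $\R$-$j$-linear map, with $N^{(j)}\equiv0$ for $j>p$. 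Passing to the time variable, the superposition $u(\cdot)\mapsto N(u(\cdot))$ is likewise a polynomial map of degree $p$ from $C([0,T];\ell^{2,m}(\Z^{d}))$ into itself, hence $C^{\infty}$.

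\textbf{Step 2: the fixed–point formulation and $C^\infty$ dependence.} By Theorem~\ref{2} the solution exists on $[0,T]$ and satisfies $\Phi(u_{0})=\mathcal{L}(u_{0},\Phi(u_{0}))$, where $\mathcal{L}(u_{0},u)(t):=e^{\frac{it}{2}\Delta}u_{0}-i\mu\int_{0}^{t}e^{\frac{i(t-s)}{2}\Delta}N(u(s))\,ds$. The free propagator $e^{\frac{it}{2}\Delta}$ is unitary on $\ell^{2}$ and bounded on $\ell^{2,1}$, uniformly for $t\in[0,T]$ (the commutator estimate for $[\langle\cdot\rangle,\Delta]$ already used for Theorem~\ref{2}); hence $\mathcal{D}f(t):=\int_{0}^{t}e^{\frac{i(t-s)}{2}\Delta}f(s)\,ds$ is a bounded operator on $C([0,T];\ell^{2,m}(\Z^{d}))$ which, composed with multiplication by any fixed coefficient, is of Volterra type, hence quasi-nilpotent. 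Thus $\mathcal{L}(u_{0},\cdot)$ is $C^{\infty}$ and $\partial_{u}\bigl(u-\mathcal{L}(u_{0},u)\bigr)=\mathrm{Id}+i\mu\,\mathcal{D}\circ N'(\Phi(u_{0}))$ is of the form $\mathrm{Id}+(\text{Volterra})$, hence invertible. The implicit function theorem in Banach spaces gives $\Phi\in C^{\infty}\bigl(\ell^{2,m}(\Z^{d});C([0,T];\ell^{2,m}(\Z^{d}))\bigr)$; in particular each $D^{k}\Phi(u_{0})$ is automatically a bounded symmetric $k$-linear map into $C([0,T];\ell^{2,m}(\Z^{d}))$.

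\textbf{Step 3: identifying $D^{k}\Phi$ with $\Psi^{k}$.} Differentiate $\Phi(u_{0})=\mathcal{L}(u_{0},\Phi(u_{0}))$ $k$ times in $u_{0}$; since $e^{\frac{it}{2}\Delta}u_{0}$ is linear in $u_{0}$, only its first derivative survives, and the Fa\`a di Bruno formula applied to the polynomial superposition, after peeling off the single-block partition, shows that $W:=D^{k}\Phi(u_{0})[v_{1},\dots,v_{k}]$ is the unique $C([0,T];\ell^{2,m})$ solution of the linear equation
\begin{equation*}
i\partial_{t}W+\tfrac12\Delta W-\mu\,N'(\Phi(u_{0}))[W]=\mu\!\!\sum_{\substack{\pi\in\mathcal{P}(\{1,\dots,k\})\\ |\pi|\ge2}}\!\! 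N^{(|\pi|)}(\Phi(u_{0}))\bigl[\,D^{|B|}\Phi(u_{0})[v_{B}]:B\in\pi\,\bigr],
\end{equation*}
with initial datum $v_{1}$ if $k=1$ and $0$ if $k\ge2$. Because the right-hand side involves only $D^{j}\Phi$ with $j<k$, an induction on $k$, combined with the Gr\"onwall bound for this Volterra equation (constant depending only on $T$ and $\sup_{[0,T]}\|\Phi(u_{0})\|_{\ell^{\infty}}\le\sup_{[0,T]}\|\Phi(u_{0})\|_{\ell^{2,m}}<\infty$), identifies $W$ with $\Psi^{k}_{u_{0}}[v_{1},\dots,v_{k}]$ of Definition~\ref{higher order} and re-proves that $\Psi^{k}_{u_{0}}$ is a bounded $k$-linear map $\underbrace{\ell^{2,m}(\Z^{d})\times\cdots\times\ell^{2,m}(\Z^{d})}_{k}\to C([0,T];\ell^{2,m}(\Z^{d}))$.

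\textbf{Main obstacle.} The real work is twofold: (i) matching the Fa\`a di Bruno combinatorics — in particular bookkeeping which arguments are conjugated — so that the recursion of Definition~\ref{higher order} coincides term by term with the output of implicit differentiation; here $p$ odd is indispensable, since for non-integer $p$ the map $u\mapsto|u|^{p-1}u$ fails to be $C^{\lceil p\rceil}$ near $0$ and the whole scheme collapses; and (ii) the uniform-in-$u_{0}$ Volterra/Gr\"onwall estimates, which underlie both the invertibility of $\partial_{u}(u-\mathcal{L})$ and, if one wishes to bypass the implicit function theorem, a direct induction showing the remainder $\Psi^{k-1}_{u_{0}+h}-\Psi^{k-1}_{u_{0}}-\Psi^{k}_{u_{0}}[\,\cdot\,,h]$ is $o(\|h\|_{\ell^{2,m}})$ in $C([0,T];\ell^{2,m})$, which additionally requires local uniform boundedness and continuity in $u_{0}$ of all the $\Psi^{j}_{u_{0}}$.
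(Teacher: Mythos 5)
Your proposal is correct, but it takes a genuinely different route from the paper. The paper proves the statement by direct verification of the definition of the Fr\'{e}chet derivative: it writes the Duhamel formulas for $\Phi(v_0)-\Phi(u_0)$ and for $\Psi^1_{u_0}(v_0-u_0)$, inserts the first-order Taylor expansion $|z+\Delta z|^{p-1}(z+\Delta z)=|z|^{p-1}z+\frac{p+1}{2}|z|^{p-1}\Delta z+\frac{p-1}{2}|z|^{p-3}z^2\overline{\Delta z}+o(|\Delta z|)$, and closes with Gr\"{o}nwall to get the $o(\|v_0-u_0\|)$ bound; it then proves boundedness of $\Psi^1_{u_0}$ by another Duhamel--Gr\"{o}nwall estimate, and dismisses all $k\ge 2$ (and $m=1$) as ``similar and even simpler.'' You instead observe that for odd $p$ the nonlinearity is a restriction of a bounded real $p$-linear form on the Banach algebra $\ell^{2,m}(\Z^d)$, recast Duhamel as a fixed-point equation whose linearization is $\mathrm{Id}$ plus a quasi-nilpotent Volterra operator, and invoke the implicit function theorem to get $\Phi\in C^\infty$ with all $D^k\Phi(u_0)$ automatically bounded symmetric $k$-linear maps; the identification $D^k\Phi=\Psi^k$ then follows by Fa\`{a} di Bruno plus uniqueness for the resulting linear equations. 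Your approach buys all orders $k$ at once and makes explicit the induction and combinatorial bookkeeping that the paper's ``similar'' conceals (for $k\ge 2$ the verification is in fact more, not less, involved than $k=1$, precisely because of the partition/signal sums in Definition~\ref{higher order}); it also sidesteps a small subtlety in the paper's argument, namely that the $o(|\Delta z|)$ remainder must be controlled uniformly across lattice sites when summed in $\ell^2$, which is automatic in the polynomial formulation where the remainder is explicitly quadratic. The paper's approach is more elementary and self-contained (no Banach-space IFT), and its $k=1$ computation is what actually pins down the coefficients $\frac{p+1}{2}$ and $\frac{p-1}{2}$ appearing in equation~(\ref{r}). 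The one piece of work your outline defers --- checking term by term that the Fa\`{a} di Bruno output of the polynomial superposition reproduces the recursion of Definition~\ref{higher order}, including which factors carry conjugates --- is real but routine, and you have correctly identified it as the crux rather than glossed over it.
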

		In a special form, we can even prove the analytic dependence on parameter $\varepsilon$.
		\begin{thm}\label{4}
			Let $p$ be a positive odd integer and $u^{(\varepsilon)}\in C^{1}([0,T];\ell^{2,s}(\Z^{d}))$, $s\in [0,1]$, $\forall \varepsilon\in \mathbb{R}$, be the classical solution of the equation (\ref{NLS}) with the initial data $\varepsilon u_{0}(x)$, i.e.
			\begin{equation}\label{p}
				\left\{
				\begin{aligned}
					& i\partial_{t} u^{(\varepsilon)}(x,t)+ \dfrac{1}{2}\Delta u^{(\varepsilon)}(x,t) =\mu|u^{(\varepsilon)}|^{p-1}u^{(\varepsilon)},\;  \mu=\pm 1,\; p>1,\\
					& u^{(\varepsilon)}(x,0) = \varepsilon u_{0}(x), \quad (x,t)\in\Z^d\times \R,
				\end{aligned}
				\right.
			\end{equation}
			where $u_{0}\in \ell^{2,s}(\Z^{d})$ is fixed. Then $u^{(\varepsilon)}$ is a real analytic function with respect to $\varepsilon$, which shows the solution will analytically depend on initial data.
		\end{thm}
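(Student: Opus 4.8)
The plan is to fix an arbitrary $\varepsilon_{0}\in\R$ and a time $T>0$, recast (\ref{p}) as an equation $\mathcal{G}(\varepsilon,u)=0$ for a \emph{polynomial} (hence real-analytic) map $\mathcal{G}$ between Banach spaces, and then apply the analytic implicit function theorem at the point $(\varepsilon_{0},u^{(\varepsilon_{0})})$. The observation that makes everything polynomial is that, since $p$ is a positive odd integer, $N(u):=\mu|u|^{p-1}u=\mu\,u^{(p+1)/2}\,\bar u^{(p-1)/2}$ is a monomial of degree $p$ in the pointwise powers of $u$ and $\bar u$; because $\ell^{2,m}(\Z^{d})$, $m=0$ or $1$, is a Banach algebra under pointwise multiplication (equivalently, $H^{m}(\mathbb{T}^{d})$ is an algebra under convolution, cf.\ Remark \ref{ee}), the induced map $N\colon \ell^{2,m}(\Z^{d})\to \ell^{2,m}(\Z^{d})$ is a bounded polynomial map of degree $p$, in particular real-analytic. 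Note that the conjugation in $N$ forbids holomorphy in a complexified $\varepsilon$, so real-analyticity (as in the statement) is the natural conclusion.

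First I would work on $X_{T}:=C([0,T];\ell^{2,m}(\Z^{d}))$ and write the Duhamel form of (\ref{p}),
\begin{equation*}
u=\Lambda_{\varepsilon}(u),\qquad \Lambda_{\varepsilon}(v)(t):=\varepsilon\,e^{\frac{it}{2}\Delta}u_{0}-i\int_{0}^{t}e^{\frac{i(t-s)}{2}\Delta}N(v(s))\,ds ,
\end{equation*}
using that $e^{\frac{it}{2}\Delta}$ is bounded on $\ell^{2,m}(\Z^{d})$ uniformly for $t\in[0,T]$ (on the Fourier side it is multiplication by the smooth unimodular symbol $e^{-it\sum_{j}\sin^{2}(x_{j}/2)}$, which preserves $H^{m}(\mathbb{T}^{d})$ with norm $\le 1+CT$). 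Set $\mathcal{G}\colon \R\times X_{T}\to X_{T}$, $\mathcal{G}(\varepsilon,v):=v-\Lambda_{\varepsilon}(v)$. Then $\mathcal{G}$ is a polynomial map (affine in $\varepsilon$, polynomial of degree $p$ in $(v,\bar v)$, postcomposed with bounded linear operators), hence jointly real-analytic, and $\mathcal{G}(\varepsilon_{0},u^{(\varepsilon_{0})})=0$ by the very definition of $u^{(\varepsilon_{0})}$, which exists globally by Theorem \ref{2}.

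The crux is the invertibility of the partial Fr\'{e}chet derivative
\begin{equation*}
D_{v}\mathcal{G}(\varepsilon_{0},u^{(\varepsilon_{0})})[h](t)=h(t)+i\int_{0}^{t}e^{\frac{i(t-s)}{2}\Delta}\big(DN(u^{(\varepsilon_{0})}(s))[h(s)]\big)\,ds=:(I-K)[h].
\end{equation*}
Although $[0,T]$ may be large and no smallness is available, $K$ is a Volterra (causal) operator whose operator-valued kernel is bounded on $[0,T]$: indeed $\sup_{[0,T]}\|u^{(\varepsilon_{0})}\|_{\ell^{2,m}}=:M<\infty$ by Theorem \ref{2} (for $m=0$ this is conservation of mass, $M=|\varepsilon_{0}|\,\|u_{0}\|_{\ell^{2}}$; for $m=1$ it follows from the continuation criterion), and $\|DN(w)[h]\|_{\ell^{2,m}}\le C\|w\|_{\ell^{2,m}}^{p-1}\|h\|_{\ell^{2,m}}$ by the algebra property. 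Iterating the Volterra estimate over a simplex gives $\|K^{n}\|_{X_{T}\to X_{T}}\le (CM^{p-1}T)^{n}/n!$, so the Neumann series $\sum_{n\ge 0}K^{n}$ converges and $I-K=D_{v}\mathcal{G}(\varepsilon_{0},u^{(\varepsilon_{0})})$ is a bounded isomorphism of $X_{T}$. The analytic implicit function theorem in Banach spaces then yields a neighbourhood $U\ni\varepsilon_{0}$ and a real-analytic map $U\ni\varepsilon\mapsto\tilde u(\varepsilon)\in X_{T}$ with $\mathcal{G}(\varepsilon,\tilde u(\varepsilon))=0$ and $\tilde u(\varepsilon_{0})=u^{(\varepsilon_{0})}$; the uniqueness part of Theorem \ref{2} forces $\tilde u(\varepsilon)=u^{(\varepsilon)}$ on $U$, and since $\varepsilon_{0}$ was arbitrary and $T$ arbitrary, $\varepsilon\mapsto u^{(\varepsilon)}$ is real-analytic on $\R$. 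Differentiating $\mathcal{G}(\varepsilon,u^{(\varepsilon)})=0$ repeatedly recovers the multilinear expressions of Theorem \ref{3} as the Taylor coefficients. The main obstacle is precisely this invertibility of the linearisation over a fixed, not necessarily small, time interval; the Volterra structure is what resolves it. The same idea underlies a more hands-on alternative: solve on short steps $[jT_{1},(j+1)T_{1}]$ by the analytic uniform contraction principle, obtaining step-solution maps analytic in $\varepsilon$ and in the datum at the left endpoint, and compose the finitely many steps, whose number is controlled by the a priori $\ell^{2,m}$-estimate from Theorem \ref{2}.
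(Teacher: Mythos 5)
Your proof is correct, but it follows a genuinely different route from the paper. You recast the Duhamel formulation as $\mathcal{G}(\varepsilon,v)=v-\Lambda_{\varepsilon}(v)=0$, observe that for odd $p$ the nonlinearity $\mu u^{(p+1)/2}\bar u^{(p-1)/2}$ is a bounded degree-$p$ polynomial on the (real) Banach algebra $\ell^{2,m}(\Z^{d})$, and invoke the analytic implicit function theorem, with the invertibility of the linearisation $I-K$ on the full interval $[0,T]$ secured by the Volterra bound $\|K^{n}\|\le (CM^{p-1}T)^{n}/n!$; uniqueness then identifies the implicit solution with $u^{(\varepsilon)}$. The paper instead argues by the method of majorants: it first uses Theorem \ref{3} to get smoothness in $\varepsilon$, writes the formal Taylor expansion $u^{(\lambda+\varepsilon)}=\sum_{k}\varepsilon^{k}u_{k}^{(\lambda)}$, derives the hierarchy of linear equations for the coefficients, proves inductively that $\|u_{n+1}^{(\lambda)}\|\le C_{n+1}\|u_{0}\|^{n+1}$ with $C_{n+1}=K\sum_{k_{1}+k_{2}+k_{3}=n+1}C_{k_{1}}C_{k_{2}}C_{k_{3}}$, and shows the generating function $\sum C_{k}x^{k}$ has positive radius of convergence by realising it as the analytic solution of the scalar ODE coming from $Ay-By^{3}=Cx+D$ (Lemma \ref{CK}), whence the Lagrange remainder vanishes. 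Your argument is shorter and more conceptual: it handles all odd $p$ uniformly, does not rely on Theorem \ref{3} or on the combinatorics of the coefficient recursion, and directly yields analyticity of $\varepsilon\mapsto u^{(\varepsilon)}$ as a $C([0,T];\ell^{2,m}(\Z^{d}))$-valued map. The price is the reliance on the real-analytic implicit function theorem in Banach spaces (which requires complexifying the real polynomial $\mathcal{G}$, legitimate since conjugation is real-multilinear). The paper's computation, by contrast, is elementary and self-contained, produces an explicit lower bound on the radius of convergence (uniform in $x$ and in $t$ on compact intervals, as its subsequent remark records), and exhibits the Taylor coefficients concretely as the iterated derivatives $\Psi^{k}$ of Theorem \ref{3} — information you recover only implicitly by differentiating $\mathcal{G}(\varepsilon,u^{(\varepsilon)})=0$.
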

		Before we state other results, we shall introduce the definitions of the scattering, the wave operator and the asymptotic completeness.
		\begin{defi}
			We say a global classical solution $u\in C^{1}(\left[0,+\infty\right);\ell^{2}(\Z^{d}))$ of discrete semilinear Sch\"{o}dinger equation (\ref{NLS}) with the initial data $u(x,0)=u_{0}(x)\in \ell^{2}(\Z^{d})$ scatters in $\ell^{2}(\Z^{d})$ to a solution of free discrete Sch\"{o}dinger equation with the initial data $u_{+}\in \ell^{2}(\Z^{d})$ as $t\to +\infty$ if 
			\begin{equation*}
				\|u(\cdot,t)-e^{it\Delta/2}u_{+}\|_{\ell^{2}(\Z^{d})} \to 0, \quad t\to \infty,
			\end{equation*}
			or equivalently 
			\begin{equation*}
				\|e^{-it\Delta/2}u(\cdot,t)-u_{+}\|_{\ell^{2}(\Z^{d})} \to 0, \quad t\to \infty.
			\end{equation*}
		\end{defi}
		\begin{defi}
			If for every asymptotic state $u_{+}\in \ell^{2}(\Z^{d})$, there exists a unique initial data $u_{0}\in \ell^{2}(\Z^{d})$, whose corresponding global classical solution $u\in C^{1}(\left[0,+\infty\right);\ell^{2}(\Z^{d}))$ scatters to $e^{it\Delta/2}u_{+}$ as $t\to +\infty$. Then the wave operator $\Omega_{+}:\ell^{2}(\Z^{d})\to \ell^{2}(\Z^{d})$ is defined by $\Omega_{+}(u_{+}):=u_{0}$.
		\end{defi}
		\begin{defi}
			If for a classical solution $u\in C^{1}(\left[0,+\infty\right);\ell^{2}(\Z^{d}))$ of discrete semilinear Sch\"{o}dinger equation (\ref{NLS}), there exists $u_{+}\in \ell^{2}(\Z^{d})$ and $u$ scatters to $e^{it\Delta/2}u_{+}$, then we say that we have the asymptotic completeness of $u$.
		\end{defi}
		Now, we state our results about the existence of the wave operator and the asymptotic completeness.
		\begin{thm}\label{5}
			If $6\le (p-1)d$, then the wave operator $\Omega_{+}:\ell^{2}(\Z^{d})\to \ell^{2}(\Z^{d})$ exists and it's continuous in the $\ell^{2}(\Z^{d})$-norm.
		\end{thm}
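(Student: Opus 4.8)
We outline the approach. The plan is to construct, for each asymptotic state $u_{+}\in\ell^{2}(\Z^{d})$, the scattering solution by solving the Duhamel equation backward from $t=+\infty$ on a half-line $[T,\infty)$ on which the free evolution is already small, and then to propagate it back to $t=0$ using the global well-posedness of Theorem \ref{2}; the threshold $6\le(p-1)d$ is exactly what makes the underlying Strichartz contraction close. The first ingredient is the dispersive estimate for the discrete group: since the Fourier multiplier of $e^{it\Delta/2}$ factorizes over coordinates, the kernel of $e^{it\Delta/2}$ is a $d$-fold product of one-dimensional kernels, and a degenerate stationary-phase argument in one dimension (van der Corput of order three, since the phase $n\theta+t\cos\theta$ on the torus has stationary points at which the third derivative is nonvanishing) gives $\|e^{it\Delta/2}\|_{\ell^{1}(\Z^{d})\to\ell^{\infty}(\Z^{d})}\lesssim(1+|t|)^{-d/3}$. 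Interpolating (Riesz--Thorin) with the $\ell^{2}$-unitarity yields $\|e^{it\Delta/2}\|_{\ell^{r'}\to\ell^{r}}\lesssim|t|^{-\sigma_{r}}$ with $\sigma_{r}:=\tfrac{d}{3}(1-\tfrac{2}{r})$, $r\ge2$, whence a $TT^{\ast}$ argument together with the Hardy--Littlewood--Sobolev inequality and the Christ--Kiselev lemma gives, for all admissible pairs ($\tfrac{2}{q}=\sigma_{r}$, $r\ge2$, the endpoint $(q,r)=(\infty,2)$ included),
\begin{equation*}
\|e^{it\Delta/2}f\|_{L^{q}_{t}([0,\infty);\ell^{r})}\lesssim\|f\|_{\ell^{2}},\qquad \Bigl\|\int_{t}^{\infty}e^{i(t-s)\Delta/2}F(s)\,ds\Bigr\|_{L^{q}_{t}([0,\infty);\ell^{r})}\lesssim\|F\|_{L^{\tilde{q}'}_{t}([0,\infty);\ell^{\tilde{r}'})}.
\end{equation*}

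Fixing $u_{+}\in\ell^{2}(\Z^{d})$, I would choose admissible pairs $(q,r)$ and $(\tilde{q},\tilde{r})$ with $p\tilde{r}'=r$ and $p\tilde{q}'=q$; a short computation with the admissibility relations shows such a choice exists precisely when $6\le(p-1)d$, and at the threshold $(p-1)d=6$ it is forced to be the diagonal pair $q=r=p+1$. For such a choice the identity $\||u|^{p-1}u\|_{\ell^{\tilde{r}'}}=\|u\|_{\ell^{p\tilde{r}'}}^{p}=\|u\|_{\ell^{r}}^{p}$ integrates up to the nonlinear estimate $\||u|^{p-1}u\|_{L^{\tilde{q}'}_{t}\ell^{\tilde{r}'}_{x}}\lesssim\|u\|_{L^{q}_{t}\ell^{r}_{x}}^{p}$, and its difference version follows from $\bigl||u|^{p-1}u-|v|^{p-1}v\bigr|\lesssim(|u|^{p-1}+|v|^{p-1})|u-v|$ (valid for all $p>1$). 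Since $e^{it\Delta/2}u_{+}\in L^{q}_{t}([0,\infty);\ell^{r})$, pick $T=T(u_{+})$ with $\|e^{it\Delta/2}u_{+}\|_{L^{q}_{t}([T,\infty);\ell^{r})}$ small; then
\begin{equation*}
\mathcal{A}(u)(t):=e^{it\Delta/2}u_{+}-i\mu\int_{t}^{\infty}e^{i(t-s)\Delta/2}|u|^{p-1}u(s)\,ds
\end{equation*}
is, by the Strichartz and nonlinear estimates, a contraction on a small ball of $C([T,\infty);\ell^{2})\cap L^{q}_{t}([T,\infty);\ell^{r})$. Its fixed point $u$ solves (\ref{NLS}) on $[T,\infty)$ --- automatically a classical solution, since $\Delta$ is bounded on $\ell^{2}(\Z^{d})$ and $v\mapsto|v|^{p-1}v$ is locally Lipschitz there --- and rewriting Duhamel as $e^{-it\Delta/2}u(t)=u_{+}-i\mu\int_{t}^{\infty}e^{-is\Delta/2}|u|^{p-1}u\,ds$, with the integral converging in $\ell^{2}$ as $t\to\infty$ (Strichartz again, since $|u|^{p-1}u\in L^{\tilde{q}'}_{t}([T,\infty);\ell^{\tilde{r}'})$), gives $\|e^{-it\Delta/2}u(t)-u_{+}\|_{\ell^{2}}\to0$; thus $u$ scatters to $u_{+}$.

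Since $u(T)\in\ell^{2}(\Z^{d})$, Theorem \ref{2} together with the time-reversal symmetry $u(t)\mapsto\overline{u(-t)}$ of (\ref{NLS}) extends $u$ to a global classical solution on $[0,\infty)$, which by uniqueness still scatters to $u_{+}$; I set $\Omega_{+}(u_{+}):=u(0)$. For well-definedness, two global solutions scattering to the same $u_{+}$ have small Strichartz norm on some $[T',\infty)$ (a short bootstrap off the dispersive decay), so the difference estimate forces them to coincide there, hence everywhere by Theorem \ref{2}; thus $u_{0}$ is unique. For continuity of $\Omega_{+}$ at $u_{+}$: the $T$ above depends only on $u_{+}$, so for $v_{+}$ close to $u_{+}$ in $\ell^{2}$ the same $T$ controls $e^{it\Delta/2}v_{+}$ and yields the scattering solution $v$ in the same small ball; writing $w=u-v$ and applying the Strichartz and Lipschitz estimates, the small nonlinear contribution is absorbed and $\|u(T)-v(T)\|_{\ell^{2}}\lesssim\|u_{+}-v_{+}\|_{\ell^{2}}$, after which the continuous dependence of (\ref{NLS}) on its datum over the fixed interval $[0,T]$ (Gronwall for the $\ell^{2}$-ODE) gives $\|\Omega_{+}(u_{+})-\Omega_{+}(v_{+})\|_{\ell^{2}}\to0$ as $v_{+}\to u_{+}$.

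The main obstacle is the second step: identifying the genuinely discrete dispersive rate $|t|^{-d/3}$ (in place of the continuum $|t|^{-d/2}$), which is where the number $6=2\cdot 3$ enters, and then checking that the Strichartz exponent bookkeeping closing the contraction is solvable exactly under $6\le(p-1)d$, with equality corresponding to the diagonal admissible pair $q=r=p+1$.
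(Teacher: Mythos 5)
Your proposal is correct and follows essentially the same route as the paper: solve the Duhamel equation backward from $t=+\infty$ by a Strichartz-based contraction on $[T,\infty)$ with the diagonal-type exponents made admissible exactly by $6\le(p-1)d$, then extend to $[0,\infty)$ via the global (time-reversed) well-posedness of Theorem \ref{2}, with uniqueness and continuity of $\Omega_{+}$ extracted from the same estimates. The only differences are cosmetic: you sketch the derivation of the $|t|^{-d/3}$ dispersive bound and the Strichartz estimates, which the paper simply cites, and you phrase the iteration as a Banach fixed point rather than the paper's explicit induction on the iterates.
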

		\begin{thm}\label{6}
			If $6\le (p-1)d$, then there exists $\varepsilon=\varepsilon(d)>0$, such that whenever $\|u_{0}\|_{\ell^{2}(\Z^{d})}\le \varepsilon$, we have the asymptotic completeness of $u$.
		\end{thm}
		As an extension of the above well-posedness and the scattering results, we also studied the long-time well-posedness theory and the soliton solution or the ground state of the DNLS.
		\begin{thm}\label{7}
			For the following generalized discrete quasilinear Schr\"{o}dinger equation with the small initial data
			 \begin{equation}\label{small data}
			 	\left\{
			 	\begin{aligned}
			 		& i\partial_{t} u(x,t)+ g^{jk}(u,\partial u)\partial_{jk} u(x,t)= F(u,\partial u), \\
			 		& u(x,0) = \varepsilon u_{0}(x), \quad (x,t)\in\Z^d\times \R,
			 	\end{aligned}
			 	\right.
			 \end{equation}
			 if $s\in[0,1]$,  $u_{0}\in \ell^{2,s}(\Z^{d})$, then there exist $\delta>0$ and constant $K=K(g^{jk},F,u_{0},d)>0$ such that $ \forall 0<\varepsilon<\delta$, the maximal existence time $T^{\ast}\ge K\log(\log(\frac{1}{\varepsilon}))$.
	    \end{thm}
	    \begin{thm}\label{8}
	    	For the following ground state equation, derived from the soliton solution of the focusing DNLS,  
	    	\begin{equation}\label{ground}
	    		-\Delta Q(x)+2\tau Q(x)=2|Q|^{p-1}Q,\; x\in \Z^{d}, \; \tau>0,
	    	\end{equation}
	    	the three tables below illustrate the uniformly lower bound property of the ground state $Q$, where, for simplicity, we denote $\sigma:=\frac{p-1}{2}$, $q:=\frac{2d(2\sigma+1)}{d(2\sigma+1)+3}$, $\sigma_{\ast}$ as the positive root of $\frac{\sigma(2\sigma+1)}{\sigma+1}=\frac{3}{d}$.
	    	\begin{table}[htb]   
	    		\captionsetup{skip=3pt} 
	    		\caption{\Large{$d=1$}}  
	    		\label{table:1} 
	    		\begin{center} 
	    			\begin{tabular}{|m{1.8cm}<{\centering}|m{2.8cm}<{\centering}|m{2.9cm}<{\centering}|m{1.8cm}<{\centering}|m{2.9cm}<{\centering}|}   
	    				\hline   \textbf{} & \textbf{$0<\sigma<\sigma_{\ast}$} & \textbf{$\sigma_{\ast}\le \sigma< \frac{2}{d}$} & \textbf{$\frac{2}{d}\le \sigma <\frac{3}{d}$} & \textbf{$\sigma\ge \frac{3}{d}$}\\   
	    				\hline   $\|\cdot\|_{\ell^{2}(\Z^{d})}$ & $\forall v >0, \exists \|Q\|=v$ & $\forall v >0, \exists \|Q\|=v$  & $\times$ & $\exists\varepsilon(d),\|Q\|\ge \varepsilon>0$\\ 
	    				\hline $\|\cdot\|_{\ell^{q}(\Z^{d})}$   & $\times$ & $\exists\varepsilon(d),\|Q\|\ge \varepsilon>0$  & $\times$ & $\exists\varepsilon(d),\|Q\|\ge \varepsilon>0$\\  
	    				\hline   
	    			\end{tabular} 
	    		\end{center}   
	    	\end{table}
	    		\begin{table}[htb]   
	    		\captionsetup{skip=3pt} 
	    		\caption{\Large{$d=2$}}  
	    		\label{table:2} 
	    		\begin{center} 
	    			\begin{tabular}{|m{1.8cm}<{\centering}|m{2.8cm}<{\centering}|m{2.9cm}<{\centering}|m{2.9cm}<{\centering}|}   
	    				\hline   \textbf{} & \textbf{$0<\sigma<\sigma_{\ast}=\frac{2}{d}$} & \textbf{$\sigma_{\ast}=\frac{2}{d}\le \sigma< \frac{3}{d}$}  & \textbf{$\sigma\ge \frac{3}{d}$}\\   
	    				\hline   $\|\cdot\|_{\ell^{2}(\Z^{d})}$ & $\forall v >0, \exists \|Q\|=v$ & $\forall v >0, \exists \|Q\|=v$ & $\exists\varepsilon(d),\|Q\|\ge \varepsilon>0$\\ 
	    				\hline $\|\cdot\|_{\ell^{q}(\Z^{d})}$ 																						 & $\times$  & $\exists\varepsilon(d),\|Q\|\ge \varepsilon>0$ & $\exists\varepsilon(d),\|Q\|\ge \varepsilon>0$\\  
	    				\hline   
	    			\end{tabular} 
	    		\end{center}   
	    	\end{table}
	    	\begin{table}[htb]   
	    		\captionsetup{skip=3pt} 
	    		\caption{\Large{$d\ge 3$}}  
	    		\label{table:3} 
	    		\begin{center} 
	    			\begin{tabular}{|m{1.8cm}<{\centering}|m{2.8cm}<{\centering}|m{1.8cm}<{\centering}|m{2.9cm}<{\centering}|m{2.9cm}<{\centering}|}   
	    				\hline   \textbf{} & \textbf{$0<\sigma<\frac{2}{d}$} & \textbf{$\frac{2}{d}\le \sigma<\sigma_{\ast}$} & \textbf{$\sigma_{\ast}\le \sigma <\frac{3}{d}$} & \textbf{$\sigma\ge \frac{3}{d}$}\\   
	    				\hline   $\|\cdot\|_{\ell^{2}(\Z^{d})}$ & $\forall v >0, \exists \|Q\|=v$ & $\times$  & $\times$ & $\exists\varepsilon(d),\|Q\|\ge \varepsilon>0$\\ 
	    				\hline $\|\cdot\|_{\ell^{q}(\Z^{d})}$   & $\times$ & $\times$  & $\exists\varepsilon(d),\|Q\|\ge \varepsilon>0$ & $\exists\varepsilon(d),\|Q\|\ge \varepsilon>0$\\  
	    				\hline   
	    			\end{tabular} 
	    		\end{center}   
	    	\end{table}
	    \end{thm}   
		We organize the paper as follows. In Section 2, we establish the useful conservation law and the energy estimate, which play a central role in establishing the well-posedness theory of generalized DNLS. In Section 3, we give the detailed proofs of Theorem \ref{1} and Theorem \ref{2}. In Section 4, we present the explicit expression of $\Psi^{k}$ and give the proofs of Theorem \ref{3} and Theorem \ref{4}. In Section 5, we finish the proofs of Theorem \ref{5} and Theorem \ref{6}. In section 6, the proofs of Theorem \ref{7} and Theorem \ref{8} will be presented.

		\noindent
		\textbf{Notation.}
		\begin{itemize}
			\item By $u\in C^{k}([0,T]; B)( or \; L^{p}([0,T];B))$ for a Banach space $B,$ we mean $u$ is a $C^{k}(or \; L^{p})$ map from $[0,T]$ to $B;$ see page 301 in \cite{32}.
			\item By $u\in C_{0}(\Z^{d}\times [0,T])$, we mean $u$ has compact support on $\Z^{d}\times [0,T]$.
			\item By $A\lesssim B$ (resp. $A\approx B$), we mean there is a positive constant $C$, such that $A\le CB$ (resp. $C^{-1}B\le A \le C B$). If the constant $C$ depends on $p,$ then we write $A\lesssim_{p}B$ (resp. $A\approx_{p} B$).
			\item By $T\in \mathcal{B}(U,V)$ for normed spaces $U,V$, we mean $T:U\to V$ is a bounded linear map.
			\item Set $\langle m \rangle:=(1+|m|^{2})^{1/2}$ and $|m|:=(\sum_{j=1}^{d}|m_{j}|^{2})^{1/2}$ for $m=(m_{1},\cdots, m_{d})\in \Z^{d}$.
		\end{itemize}
		\section{The conservation law and energy estimate}
		In this section, we establish two conservation laws (the mass conservation \& the energy conservation) and the energy estimates of discrete Schr\"{o}dinger equations. For conservation law, the energy conservation follows directly from time translation invariance and the Noether theorem, see \cite{1,2}, but mass conservation may fail to simply correspond with any usual symmetries of nonrelativistic motions, like space-time translation and space rotation. In fluid mechanics, the mass conservation may correspond to the gauge invariance of some field, see \cite{3}. For energy estimate, we will establish it on a more general framework, and it will be rather useful in the deeper theories.	
		\par
		For $u\in C^1([0,T];\ell^{2}(\Z^{d}))$, we define the mass and the energy as follows
		\begin{equation}\label{ww}
			M_{disc}[t]:= \sum_{k\in \Z^{d}}|u(k,t)|^{2},
		\end{equation}
		\begin{equation}\label{w}
			E_{disc}[t]:= \sum_{k\in \Z^{d}}|\partial u(k,t)|^2 + \frac{2\mu}{p+1}|u(k,t)|^{p+1}.
		\end{equation}
		Since for any $v\in \ell^2(\Z^d)$, we have the following identity
		\begin{equation*}
			\sum_{k\in\Z^{d}} |\partial v(k)|^{2}=\sum_{k\in\Z^{d}}|D v(k)|^2,
		\end{equation*}
		the energy can be rewritten as 
		\begin{equation}
			E_{disc}[t]:= \sum_{k\in \Z^{d}}|Du(k,t)|^2 +\frac{2\mu}{p+1}|u(k,t)|^{p+1}.
		\end{equation}
		\begin{thm}\label{conservation}
			If $u_{0}\in \ell^{2}(\Z^{d})$ and $u\in C^1([0,T];\ell^{2}(\Z^{d}))$ is a classical solution to the equation (\ref{NLS}), then we have the following two conservations
			\begin{equation*}
				M_{disc}[t]\equiv M[0], \quad E_{disc}[t]\equiv E_{disc}[0].
			\end{equation*}
		\end{thm}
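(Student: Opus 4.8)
The plan is to derive both conservation laws by differentiating $M_{disc}[t]$ and $E_{disc}[t]$ in $t$ and showing the derivative vanishes, using the equation (\ref{NLS}) together with summation-by-parts identities for $\partial_j$ on $\ell^2(\Z^d)$. Since $u\in C^1([0,T];\ell^2(\Z^d))$, term-by-term differentiation of the (absolutely convergent) sums is justified, and $\partial_t u = \tfrac{i}{2}\Delta u - i\mu|u|^{p-1}u$ lies in $C([0,T];\ell^2)$ by the equation. The key structural fact I would use is that $\partial_j$ (and hence $\Delta$) is, up to sign, self-adjoint on $\ell^2(\Z^d)$: by Proposition 2.1 in the excerpt, $\widehat{\partial_j u}(x) = 2i\sin(x_j/2)\widehat u(x)$, so $\partial_j^\ast = -\partial_j$ and $\Delta^\ast = \Delta$ with $\langle \Delta u, v\rangle = -\sum_j \langle \partial_j u,\partial_j v\rangle$ for $u,v\in\ell^2$. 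Equivalently one can use the stated identity $\sum_k |\partial v(k)|^2 = \sum_k |Dv(k)|^2$ and ordinary discrete integration by parts for $D_j$.

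For mass conservation, I would compute
\begin{equation*}
\frac{d}{dt}M_{disc}[t] = 2\,\mathrm{Re}\sum_{k\in\Z^d}\overline{u(k,t)}\,\partial_t u(k,t)
= 2\,\mathrm{Re}\Big\langle u,\ \tfrac{i}{2}\Delta u - i\mu|u|^{p-1}u\Big\rangle.
\end{equation*}
The term $\mathrm{Re}\langle u, -i\mu|u|^{p-1}u\rangle = -\mu\,\mathrm{Re}\big(i\sum_k |u(k,t)|^{p+1}\big) = 0$ since the sum is real, and $\mathrm{Re}\langle u,\tfrac{i}{2}\Delta u\rangle = \tfrac12\mathrm{Re}\big(i\langle u,\Delta u\rangle\big) = 0$ because $\langle u,\Delta u\rangle = -\sum_j\|\partial_j u\|_{\ell^2}^2$ is real. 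Hence $\frac{d}{dt}M_{disc}\equiv 0$.

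For energy conservation, write $E_{disc}[t] = -\langle u,\Delta u\rangle + \tfrac{2\mu}{p+1}\sum_k |u(k,t)|^{p+1}$, using $\sum_k|\partial u(k,t)|^2 = \sum_j\|\partial_j u\|_{\ell^2}^2 = -\langle u,\Delta u\rangle$. Differentiating, the first term gives $-2\,\mathrm{Re}\langle \partial_t u,\Delta u\rangle$ (self-adjointness of $\Delta$), and for the potential term I would use $\frac{d}{dt}|u|^{p+1} = (p+1)|u|^{p-1}\mathrm{Re}(\overline u\,\partial_t u)$, so that
\begin{equation*}
\frac{d}{dt}E_{disc}[t] = -2\,\mathrm{Re}\big\langle \partial_t u,\ \Delta u\big\rangle + 2\mu\,\mathrm{Re}\big\langle \partial_t u,\ |u|^{p-1}u\big\rangle
= -2\,\mathrm{Re}\big\langle \partial_t u,\ \Delta u - \mu|u|^{p-1}u\big\rangle.
\end{equation*}
By the equation, $\Delta u - \mu|u|^{p-1}u$ differs from $i\partial_t u$ by a factor: indeed $i\partial_t u + \tfrac12\Delta u = \mu|u|^{p-1}u$ gives $\tfrac12\Delta u - \mu|u|^{p-1}u = -i\partial_t u - \tfrac12\Delta u$, so more directly I would substitute $\mu|u|^{p-1}u = i\partial_t u + \tfrac12\Delta u$ to get the bracket equal to $\tfrac12\Delta u - i\partial_t u$, whence $\frac{d}{dt}E_{disc} = -2\,\mathrm{Re}\langle\partial_t u, \tfrac12\Delta u - i\partial_t u\rangle = -\mathrm{Re}\langle\partial_t u,\Delta u\rangle + 2\,\mathrm{Re}(i\|\partial_t u\|_{\ell^2}^2)$; the last term vanishes since $\|\partial_t u\|^2$ is real, and $\mathrm{Re}\langle\partial_t u,\Delta u\rangle = \mathrm{Re}\langle\Delta(\partial_t u), u\rangle$ must be reconciled — the cleanest route is to note $\mathrm{Re}\langle\partial_t u,\Delta u\rangle = -\sum_j\mathrm{Re}\langle\partial_j\partial_t u,\partial_j u\rangle = -\tfrac12\frac{d}{dt}\sum_j\|\partial_j u\|^2$, which folds back into the definition of $E_{disc}$ and yields $\frac{d}{dt}E_{disc}\equiv 0$ after collecting terms. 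I would organize the energy computation around this last identity from the start to avoid circularity.

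The main obstacle is purely bookkeeping: ensuring that all the interchanges of $\frac{d}{dt}$ with the infinite sums are legitimate (dominated convergence using $u,\partial_t u\in C([0,T];\ell^2)$ and, for the energy, $\partial_j u,\partial_j\partial_t u\in C([0,T];\ell^2)$, which follows since $\partial_j$ is bounded on $\ell^2$), and that the nonlinear term $|u|^{p-1}u$ is handled correctly for all $p>1$ — in particular that $|u|^{p-1}u\in\ell^2$ and that $t\mapsto\sum_k|u(k,t)|^{p+1}$ is $C^1$, which needs $|u(k,t)|^{p-1}|\partial_t u(k,t)|$ summable; this follows from $\|u\|_{\ell^\infty}\le\|u\|_{\ell^2}$ uniformly on $[0,T]$ together with Cauchy–Schwarz. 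Once these summability points are dispatched, the algebra reduces to the two cancellations "$\mathrm{Re}(i\cdot\mathbb{R}) = 0$" and the (skew-)self-adjointness of $\partial_j$, exactly as above.
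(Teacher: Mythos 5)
Your mass-conservation argument is correct and is essentially the paper's own computation in different notation: differentiate, substitute the equation, and use that $\langle u,\Delta u\rangle=-\sum_{j}\|\partial_{j}u\|_{\ell^{2}}^{2}$ and $\sum_{k}|u|^{p+1}$ are real, so both contributions have the form $\mathrm{Re}(i\cdot\mathbb{R})=0$; the summability and interchange issues you flag are genuine but routine and you dispatch them correctly. Your energy computation is the physical-space twin of the paper's Fourier-space one (Plancherel converts one into the other), so the route itself is fine.

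However, the energy argument has a genuine gap at its final step, and it is worth locating it exactly. Your computation, carried out honestly, terminates at
\begin{equation*}
\frac{d}{dt}E_{disc}[t]=2\,\mathrm{Re}\langle \partial_{t}u,\;\mu|u|^{p-1}u-\Delta u\rangle
=2\,\mathrm{Re}\langle \partial_{t}u,\;i\partial_{t}u-\tfrac12\Delta u\rangle
=-\,\mathrm{Re}\langle \partial_{t}u,\Delta u\rangle,
\end{equation*}
and, as you observe, $-\mathrm{Re}\langle \partial_{t}u,\Delta u\rangle=\tfrac12\frac{d}{dt}\sum_{j}\|\partial_{j}u\|_{\ell^{2}}^{2}$. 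But "folding this back into the definition of $E_{disc}$" does not yield $\frac{d}{dt}E_{disc}\equiv 0$: writing $K=\sum_{j}\|\partial_{j}u\|_{\ell^{2}}^{2}$ and $V=\frac{2\mu}{p+1}\sum_{k}|u|^{p+1}$, the identity you derived is $\frac{d}{dt}(K+V)=\tfrac12\frac{d}{dt}K$, which rearranges to $\frac{d}{dt}\bigl(\tfrac12 K+V\bigr)=0$ and hence $\frac{d}{dt}E_{disc}=-\frac{d}{dt}V$, which is not zero for a generic solution. So what your computation actually proves is conservation of $\tfrac12\sum_{k}|\partial u(k,t)|^{2}+\frac{2\mu}{p+1}\sum_{k}|u(k,t)|^{p+1}$, not of $E_{disc}$ as normalized in the paper (the kinetic term needs the factor $\tfrac12$, or equivalently the potential coefficient should be $\frac{4\mu}{p+1}$). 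You are in good company: the paper's own Fourier computation loses the same factor of two at the step asserting $(I)=-(II)$ — Parseval there gives $(I)=-4\mu\,\mathrm{Re}\sum_{k}\partial_{t}u\,\overline{|u|^{p-1}u}=-2(II)$. With $E_{disc}$ renormalized, your argument closes cleanly; as written, the last sentence of your energy step is a non sequitur rather than a proof.
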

		\begin{proof}
			We refer to Theorem 1.1 in \cite{30} and Proposition 4.1 in \cite{31} for the two conservations.
		\end{proof}
		In the rest of this section, we first develop the energy estimate for the following discrete inhomogeneous Schr\"{o}dinger equation
		\begin{equation}\label{inhomogeneous}
			\left\{
			\begin{aligned}
				& i\partial_{t} u(x,t)+ \frac{1}{2}\Delta u(x,t) =F(x,t),  \\
				& u(x,0) = u_{0}(x), \quad (x,t)\in\Z^d\times \R.
			\end{aligned}
			\right.
		\end{equation}
		\begin{thm}\label{energy estimate1}
			If $u_{0}\in \ell^{2,s}(\Z^{d})$, $s\ge 0,$ and $u\in C^1([0,T];\ell^{2,s}(\Z^{d}))$ is a classical solution to the equation (\ref{inhomogeneous}), then we have the following estimate
			\begin{equation*}
				\|u(\cdot,t)\|_{\ell^{2,s}(\Z^{d})}\lesssim (1+t^{s})\left(\|u_{0}\|_{\ell^{2,s}(\Z^{d})}+\int_{0}^{t}\|F(\cdot,s)\|_{\ell^{2,s}(\Z^{d})} ds\right).
			\end{equation*}
		\end{thm}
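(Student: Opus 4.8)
The plan is to reduce everything to the Duhamel formula and the unitarity of the free propagator $e^{it\Delta/2}$ on each weighted space $\ell^{2,k}(\Z^d)$. Writing the solution of \rf{inhomogeneous} as
\begin{equation*}
u(\cdot,t)=e^{it\Delta/2}u_0 - i\int_0^t e^{i(t-s)\Delta/2}F(\cdot,s)\,ds,
\end{equation*}
the estimate will follow once I control how $e^{it\Delta/2}$ acts on the weight $\langle m\rangle^k$. The key algebraic fact is that $e^{it\Delta/2}$ is unitary on $\ell^2(\Z^d)$ (this is immediate from Proposition~1.3, since on the Fourier side it is multiplication by the unimodular function $e^{-2it\sum_j\sin^2(x_j/2)}$), which settles the case $k=0$ with constant $1$ and no time growth. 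For $k\ge 1$ I would proceed by induction on $k$, using the commutator of the weight with the propagator.

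The heart of the matter is the commutator estimate. Let $w(m)=\langle m\rangle$ act as a multiplication operator on functions on $\Z^d$. I want to show that $[w^k, e^{it\Delta/2}]$, or more precisely $w^k e^{it\Delta/2} - e^{it\Delta/2} w^k$, is bounded from $\ell^{2,k}(\Z^d)$ to $\ell^2(\Z^d)$ with operator norm $\lesssim (1+t^k)$ applied to lower-order weighted norms. The cleanest route: on the Fourier side $w^k$ roughly corresponds to a $k$-th order differential operator in $x$ (since $\langle m\rangle^k f \leftrightarrow$ a sum of derivatives of $\mathcal{F}(f)$ of order $\le k$), and $e^{it\Delta/2}$ is Fourier multiplication by $e^{it\sigma(x)}$ with $\sigma(x)=-2\sum_j\sin^2(x_j/2)$. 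Differentiating $e^{it\sigma(x)}\widehat{u}(x)$ up to $k$ times via the Leibniz rule produces terms where some derivatives hit $e^{it\sigma(x)}$; each such hit brings down a factor $t\,\partial\sigma$ (and $\sigma$ is smooth with bounded derivatives on $\T^d$), so the worst term carries $t^k$ and the remaining derivatives fall on $\widehat u$, giving the weighted norm of $u_0$ of order $\le k$. Reassembling via Plancherel and the $\ell^{2,k}\!\cong\! H^k(\T^d)$ isomorphism from Remark~\ref{ee} yields $\|e^{it\Delta/2}u_0\|_{\ell^{2,k}}\lesssim (1+t^k)\|u_0\|_{\ell^{2,k}}$. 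The same bound applies to $e^{i(t-s)\Delta/2}F(\cdot,s)$ with $t$ replaced by $t-s\le t$, so integrating in $s$ over $[0,t]$ gives the Duhamel term $\lesssim (1+t^k)\int_0^t\|F(\cdot,s)\|_{\ell^{2,k}}\,ds$. Adding the two contributions produces exactly the claimed inequality.

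An alternative, possibly smoother, implementation avoids Fourier analysis on the weight: observe that $\langle m\rangle \lesssim 1+|m|$ and estimate $\|\,|m|^k u(\cdot,t)\|_{\ell^2}$ directly by a Gronwall/energy argument. Differentiating $\||m|^k u(\cdot,t)\|_{\ell^2}^2$ in $t$ and substituting the equation, the nonlinear-free part $\tfrac12\Delta$ contributes a commutator $[|m|^k,\Delta]$ which is a lower-order-in-weight discrete operator (because $\Delta$ is a finite-range convolution, so $[|m|^k,\Delta]$ only shifts the power of $|m|$ down by at least one), bounded on the relevant spaces; the $F$ term contributes $\lesssim \||m|^k u\|_{\ell^2}\||m|^kF\|_{\ell^2}$ by Cauchy--Schwarz. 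This gives a differential inequality of the form $\frac{d}{dt}\|u(\cdot,t)\|_{\ell^{2,k}}\lesssim \|u(\cdot,t)\|_{\ell^{2,k-1}} + \|F(\cdot,t)\|_{\ell^{2,k}}$, and one closes the induction on $k$ by feeding in the already-established bound for $\|u(\cdot,t)\|_{\ell^{2,k-1}}$ (which carries $1+t^{k-1}$), then integrating; the extra integration in $t$ converts $t^{k-1}$ into $t^k$, matching the stated power.

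The main obstacle I anticipate is making the commutator bookkeeping fully rigorous: in the Fourier approach one must be careful that $\langle m\rangle^k$ is not literally a polynomial, so it does not correspond to an exact differential operator of order $k$ — one should instead dominate it by $(1+|m|)^k$, expand $|m|^k$ (or $|m|^{2k}$ to stay with integers) as a polynomial, and track constants so that no hidden $d$-dependence or $t$-growth beyond $t^k$ creeps in. In the energy approach the subtlety is that $|m|^k$ for odd $k$ is not smooth at the origin and the commutator with $\Delta$ must be shown to map $\ell^{2,k}\to\ell^{2,0}$ with the right loss of one weight power; this is true because $\Delta$ has finite hopping range, but it needs the elementary inequality $\big|\langle m\pm e_j\rangle^k-\langle m\rangle^k\big|\lesssim \langle m\rangle^{k-1}$, which I would isolate as a short lemma. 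Once that discrete Leibniz-type estimate is in hand, both routes close cleanly by induction on $k$.
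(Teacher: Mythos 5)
Your primary argument is correct and is essentially the paper's proof: the paper likewise writes the solution via the Duhamel formula and obtains $\|e^{it\Delta/2}u_{0}\|_{\ell^{2,k}(\Z^{d})}\lesssim (1+t^{k})\|u_{0}\|_{\ell^{2,k}(\Z^{d})}$ by transferring to $H^{k}(\mathbb{T}^{d})$ via the isomorphism of Remark \ref{ee}, where differentiating the unimodular multiplier $k$ times produces the $t^{k}$ growth. Your Leibniz-rule bookkeeping (and the alternative Gronwall route) merely fills in details the paper leaves implicit.
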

		\begin{proof}
			Notice that, applying the fact that the discrete Fourier transform $\mathcal{F}$ is an isomorphism between $\ell^{2,s}(\Z^{d})$ and $H^{s}(\mathbb{T}^{d})$, $s\in \N$, see Remark \ref{ee}. We can immediately derive the estimate as follows
			\begin{equation*}
				\|e^{it\Delta/2}u_{0}\|_{\ell^{2,s}(\Z^{d})}\lesssim (1+t^{s})\|u_{0}\|_{\ell^{2,s}(\Z^{d})},
			\end{equation*}
			where the operator $e^{it\Delta/2}$ is defined as
			\begin{equation*}
				e^{it\Delta/2}u:=\mathcal{F}^{-1}\left(e^{itK(\xi)/2}\mathcal{F}(u)(\xi)\right), \quad \xi=(\xi_{1},\cdots,\xi_{d}) \in \mathbb{T}^{d},
			\end{equation*}
			with $K(\xi)=\sum_{j=1}^{d}(2-2\cos(\xi_{j}))$.
			Then the whole theorem is a direct consequence of the following Dunamel formula 
			\begin{equation*}
				u(x,t)=e^{it\Delta/2}u_{0}-i\mu \int_{0}^{t}e^{i(t-s)\Delta/2}F(x,s) ds,
			\end{equation*}
			and complex interpolation for fractional $s$.
		\end{proof}
		Next, we will establish the energy estimate, which comes with an additional exponential term, for a more generalized framework as follows 
		\begin{equation}\label{generalized}
			\left\{
			\begin{aligned}
				& i\partial_{t} u(x,t)+ g^{jk}(x,t)\partial_{jk} u(x,t) =F(x,t),  \\
				& u(x,0) = u_{0}(x), \quad (x,t)\in\Z^d\times \R.
			\end{aligned}
			\right.
		\end{equation}
		\begin{thm}\label{energy estimate2}
			For $s\in[0,1]$, if $g^{jk}\in L^{1}([0,T];\ell^{\infty}(\Z^{d}))$, $F\in L^{1}([0,T];\ell^{2,s}(\Z^{d}))$, $u_{0}\in \ell^{2,s}(\Z^{d})$ and $u\in C^1([0,T];\ell^{2,s}(\Z^{d}))$ is a classical solution to the equation (\ref{generalized}), then we have the following estimate with some constant $C>0$
			\begin{equation*}
				\|u(\cdot,t)\|_{\ell^{2,s}(\Z^{d})}\lesssim \left(\|u_{0}\|_{\ell^{2,s}(\Z^{d})}+\int_{0}^{t}\|F(\cdot,s)\|_{\ell^{2,s}(\Z^{d})}\right)\cdot \exp\left(C\int_{0}^{t}\sum_{j,k}\|g^{jk}(\cdot,s)\|_{\ell^{\infty}(\Z^{d})} ds\right).
			\end{equation*}
		\end{thm}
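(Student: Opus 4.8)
The plan is to run an energy-type argument directly on the equation~\eqref{generalized}, using the weight $\langle m\rangle^{k}$ to control the $\ell^{2,k}$ norm and Gronwall's inequality to absorb the variable coefficients $g^{jk}$. First I would treat the case $k=0$. Writing $N(t):=\|u(\cdot,t)\|_{\ell^{2}(\Z^{d})}^{2}=\sum_{m}|u(m,t)|^{2}$, I differentiate in $t$, substitute $\partial_{t}u = i g^{jk}\partial_{jk}u - iF$, and take the real part. The key point is that the second-order term is (formally) skew-adjoint in a way that is only spoiled by the spatial variation of $g^{jk}$: using the integration-by-parts formula for the discrete partial derivative (Theorem 2.7 in~\cite{26}), one rewrites $\mathrm{Re}\,\langle i g^{jk}\partial_{jk}u, u\rangle$ so that the ``constant-coefficient part'' cancels and what remains is a commutator term in which at most one derivative falls on $u$ through $\partial_{j}g^{jk}$, while $\|\partial_{j}g^{jk}\|_{\ell^{\infty}}\lesssim \|g^{jk}\|_{\ell^{\infty}}$ by the boundedness of $\partial_{j}$ on $\ell^{\infty}$-type bounds (or directly since $\partial_j$ is a fixed convolution against an $\ell^{1}$ kernel in each variable, cf. the Remark after Proposition~1.3). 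This yields the differential inequality
\begin{equation*}
	\frac{d}{dt}N(t) \le C\Big(\sum_{j,k}\|g^{jk}(\cdot,t)\|_{\ell^{\infty}(\Z^{d})}\Big) N(t) + 2\|F(\cdot,t)\|_{\ell^{2}(\Z^{d})}\sqrt{N(t)}.
\end{equation*}
Setting $R(t):=\sqrt{N(t)}$ and dividing, this becomes $\frac{d}{dt}R(t)\le \tfrac{C}{2}a(t)R(t)+\|F(\cdot,t)\|_{\ell^{2}}$ with $a(t)=\sum_{j,k}\|g^{jk}(\cdot,t)\|_{\ell^{\infty}}$, and the integrating-factor form of Gronwall's lemma gives exactly
\begin{equation*}
	R(t)\le \Big(R(0)+\int_{0}^{t}\|F(\cdot,s)\|_{\ell^{2}}\,ds\Big)\exp\Big(\tfrac{C}{2}\int_{0}^{t}a(s)\,ds\Big),
\end{equation*}
which is the claimed bound for $k=0$ after renaming the constant.

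For $k=1$ the idea is to apply the same computation to the weighted function, or more precisely to differentiate $\|u(\cdot,t)\|_{\ell^{2,1}}^{2}=\sum_{m}\langle m\rangle^{2}|u(m,t)|^{2}$. Now the second-order term produces, besides the skew-adjoint piece, a commutator $[\langle m\rangle^{2},\partial_{jk}]$ acting on $u$; since $\langle m\pm e_j\rangle/\langle m\rangle$ and its discrete differences are uniformly bounded, this commutator is bounded on $\ell^{2}$ relative to lower-order weighted norms of $u$ with a constant controlled by $\|g^{jk}\|_{\ell^{\infty}}$. One still needs to control the resulting first-order-in-weight term by $\|u\|_{\ell^{2,1}}$ itself together with $\|u\|_{\ell^{2}}$, and then feed the already-established $k=0$ estimate (which bounds $\|u\|_{\ell^2}$ with the same exponential factor) into the $k=1$ differential inequality. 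The cleanest packaging is to prove a single differential inequality for $N_1(t):=\|u(\cdot,t)\|_{\ell^{2,1}}^{2}$ of the form $\frac{d}{dt}N_1 \le C a(t)\,N_1 + C a(t)\,N_0 + 2\|F(\cdot,t)\|_{\ell^{2,1}}\sqrt{N_1}$, bound the $N_0$ term by the $k=0$ result, and conclude with Gronwall again; the extra exponential from $N_0$ merges into the same exponential since both are governed by $\int_0^t a(s)\,ds$.

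The main obstacle I anticipate is the commutator bookkeeping in the $k=1$ case: one must verify that $\langle m\rangle^{2}|\partial_{jk}u(m)|$ differs from $|\partial_{jk}(\langle\cdot\rangle u)(m)|$ (in $\ell^{2}$) by a term estimated by a fixed multiple of $\|u\|_{\ell^{2,1}}+\|u\|_{\ell^{2}}$, uniformly in $m$, and crucially with no factor of $t$. This is where the discreteness genuinely helps — all the relevant kernels are convolution against finitely-decaying sequences — but the argument must be written carefully because $\partial_j$ is a nonlocal operator (convolution against the slowly decaying $\varphi_j$), so ``$\partial_j$ commutes with multiplication by $\langle m\rangle$ up to bounded error'' is not completely immediate and should be justified via the mapping properties of $\partial_j$ on weighted $\ell^2$ spaces, i.e. essentially the $\ell^{2,k}\cong H^k(\mathbb{T}^d)$ isomorphism together with the multiplier $2i\sin(\xi_j/2)$ being smooth. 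A secondary, milder point is the factor $2\sqrt{N}$ versus $N$ in the $F$-term: one should handle the degenerate case $N(t_0)=0$ (where the division by $R$ is invalid) by the standard regularization $R_\varepsilon=\sqrt{N+\varepsilon}$ and letting $\varepsilon\to 0$, which I would mention in one line rather than belabor.
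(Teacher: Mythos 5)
Your overall skeleton — differentiate the squared $\ell^{2,k}$ norm, substitute the equation, and close with Gronwall — is exactly the paper's, and your Gronwall step and the final form of the bound are correct. Where you diverge is in the treatment of the principal term $\mathrm{Re}\,\langle i\,g^{jk}\partial_{jk}u,u\rangle$. You import the continuum mechanism: exploit skew-adjointness of the constant-coefficient part and control the leftover by a commutator ``in which at most one derivative falls on $u$ through $\partial_{j}g^{jk}$.'' That last step is not valid as stated here, because $\partial_{j}$ is a nonlocal convolution operator (against $\varphi_{j}$), so there is no Leibniz rule: $\partial_{j}(g\,u)\neq(\partial_{j}g)u+g\,\partial_{j}u$, and the commutator $[\partial_{j},M_{g}]$ is not multiplication by $\partial_{j}g$. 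The paper avoids all of this by using a fact that has no continuum analogue and that makes the cancellation unnecessary: $\partial_{jk}$ is a \emph{bounded} operator on $\ell^{2,k}(\Z^{d})$ for $k=0,1$ (Theorem 2.14 in \cite{26}), so one simply estimates $|\langle g^{jk}\partial_{jk}u,u\rangle_{\ell^{2,k}}|\le\|g^{jk}\|_{\ell^{\infty}}\|\partial_{jk}u\|_{\ell^{2,k}}\|u\|_{\ell^{2,k}}\lesssim\|g^{jk}\|_{\ell^{\infty}}\|u\|_{\ell^{2,k}}^{2}$ directly, with no integration by parts. This also dissolves the $k=1$ difficulty you flag: there is no need for the weight commutator $[\langle m\rangle^{2},\partial_{jk}]$ or for coupling the $k=1$ inequality to the $k=0$ bound, since the same one-line estimate in the weighted inner product gives $\frac{d}{dt}N_{1}\le C\,a(t)N_{1}+2\|F\|_{\ell^{2,1}}\sqrt{N_{1}}$ outright.

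To be clear, your route can be repaired — the commutator $[\partial_{j},M_{g}]$ is trivially bounded on $\ell^{2}$ with norm $\lesssim\|g\|_{\ell^{\infty}}$ by the triangle inequality and the $\ell^{2}$-boundedness of $\partial_{j}$ — but once you invoke that operator-norm bound you no longer gain anything from the skew-adjoint cancellation, and you are back to the paper's one-line argument. The lesson of the discrete setting is that the principal term is already lower order; the exponential loss in the statement (compared with Theorem \ref{energy estimate1}) comes precisely from treating it that way.
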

		\begin{proof}
			We only consider the case $s=0$, as the general case is similar. We still focus on the  total mass $M(t)$ of the solution $u$ as follows
			\begin{equation*}
				M(t)=\sum_{k\in \Z^{d}}|u(k,t)|^{2}.
			\end{equation*}
			Then we differentiate the total mass with respect to the time variable $t$ and substitute the equation (\ref{generalized})
			\begin{equation*}
				\frac{d M(t)}{dt}= i\sum_{k\in \Z^{d}}\left(g^{jk}(k,t)\partial_{jk}u(k,t)-F(k,t)\right)\cdot\overline{u(k,t)}-u(k,t)\cdot \overline{(g^{jk}(k,t)\partial_{jk}u(k,t)-F(k,t))}
			\end{equation*}
			\begin{equation*}
				\lesssim \|F(\cdot,t)\|_{\ell^{2}(\Z^{d})}M(t)^{\frac{1}{2}}+\sum_{j,k}\|g^{jk}(\cdot,t)\|_{\ell^{\infty}(\Z^{d})}M(t).
			\end{equation*}
			In the above inequality, we use the estimate $\|\partial_{jk}u\|_{\ell^{2,s}(\Z^{d})}\lesssim \|\partial_{j}u\|_{\ell^{2,s}(\Z^{d})}\lesssim \|u\|_{\ell^{2,s}(\Z^{d})}$ from Theorem 2.14 in \cite{26} and complex interpolation.
			Then the energy estimate can be directly derived from the Gronwall inequality. 
		\end{proof}
		\section{The existence of solution to generalized DNLS}
		As the uniqueness is a direct consequence of the previous section's energy estimates, in this section, we will focus on establishing the local existence of generalized DNLS (\ref{nonlinear}) and the global existence of discrete semilinear  Schr\"{o}dinger equation (\ref{NLS}) i.e. Theorem \ref{1} and Theorem \ref{2}.
		\par 
		Before we establish the local existence of generalized DNLS, we shall first establish the existence of a solution to generalized discrete inhomogeneous Schr\"{o}dinger equation (\ref{generalized}), which is a foundation of the essential iteration argument. To derive this existence, we need to introduce the concept of a weak solution.
		\begin{defi}
			$u$ is a weak solution to the equation (\ref{generalized}) with the vanishing initial data, if it satisfies the following identity for any $\phi\in C_{0}^{\infty}((0,T)\times \Z^{d})$ 
			\begin{equation*}
				\int_{0}^{T}\sum_{m\in \Z^{d}}F(m,t)\overline{\phi(m,t)}  dt=\int_{0}^{T}\sum_{m\in \Z^{d}}u(m,t)\overline{L\phi(m,t)}  dt,
			\end{equation*}
			where $L\phi(x,t):=i \partial_{t}\phi(x,t)+\partial_{jk}(\overline{g^{jk}(x,t)}\phi(x,t))$.
		\end{defi}
		\begin{thm}\label{foundation}
			For $s\in[0,1]$, if $g^{jk}\in C([0,T];\ell^{\infty}(\Z^{d}))$, $F\in C([0,T];\ell^{2,s}(\Z^{d}))$, $u_{0}\in\ell^{2,s}(\Z^{d})$, then the equation (\ref{generalized}) has a classical solution $u\in C^1([0,T];\ell^{2,s}(\Z^{d}))$.
		\end{thm}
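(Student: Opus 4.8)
The plan is to read (\ref{generalized}) as an abstract linear non-autonomous ODE $\partial_t u = A(t)u - iF(\cdot,t)$ in the Banach space $\ell^{2,k}(\Z^d)$, where $A(t) := i\,g^{jk}(\cdot,t)\,\partial_{jk}$, solve it by a Picard/contraction argument, and then upgrade the resulting $C([0,T];\ell^{2,k})$ fixed point to a classical $C^1$ solution using the continuity hypotheses on $g^{jk}$ and $F$. Uniqueness, though not asserted, comes for free: the difference of two classical solutions solves (\ref{generalized}) with $F\equiv 0$ and $u_0\equiv 0$, hence vanishes by Theorem \ref{energy estimate2}.

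The first step is the functional-analytic setup. By Theorem 2.14 in \cite{26} one has $\|\partial_{jk}v\|_{\ell^{2,k}(\Z^d)}\lesssim \|v\|_{\ell^{2,k}(\Z^d)}$, and multiplication by $g^{jk}(\cdot,t)\in\ell^\infty(\Z^d)$ is bounded on $\ell^{2,k}(\Z^d)$ with norm $\le\|g^{jk}(\cdot,t)\|_{\ell^\infty(\Z^d)}$ (the weight $\langle m\rangle$ commutes with it); hence $A(t)\in\mathcal{B}(\ell^{2,k}(\Z^d))$ with $\|A(t)\|\lesssim\sum_{j,k}\|g^{jk}(\cdot,t)\|_{\ell^\infty(\Z^d)}$. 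Since $g^{jk}\in C([0,T];\ell^\infty(\Z^d))$ the map $t\mapsto A(t)$ is norm-continuous on the compact interval $[0,T]$, so $M:=\sup_{t\in[0,T]}\|A(t)\|_{\mathcal{B}(\ell^{2,k})}<\infty$, and $s\mapsto A(s)v(s)$ is a continuous $\ell^{2,k}$-valued map whenever $v\in C([0,T];\ell^{2,k}(\Z^d))$; also $F\in C([0,T];\ell^{2,k}(\Z^d))$.

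Next I would construct the solution. On $X:=C([0,T];\ell^{2,k}(\Z^d))$ define $(\mathcal{T}v)(t):=u_0+\int_0^t\big(A(s)v(s)-iF(\cdot,s)\big)\,ds$; by the previous step the integrand is a continuous $\ell^{2,k}$-valued function of $s$, so $\mathcal{T}$ maps $X$ into $X$, and an easy induction gives $\|\mathcal{T}^n v-\mathcal{T}^n w\|_X\le\frac{(MT)^n}{n!}\|v-w\|_X$. Hence $\mathcal{T}^n$ is a contraction for $n$ large, so $\mathcal{T}$ has a unique fixed point $u\in X$ satisfying $u(t)=u_0+\int_0^t(A(s)u(s)-iF(\cdot,s))\,ds$. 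Since $s\mapsto A(s)u(s)-iF(\cdot,s)$ is then continuous $\ell^{2,k}$-valued, the fundamental theorem of calculus for Bochner integrals yields $u\in C^1([0,T];\ell^{2,k}(\Z^d))$ with $\partial_t u(t)=A(t)u(t)-iF(\cdot,t)$ for every $t$ and $u(\cdot,0)=u_0$; writing out $A(t)$, this is exactly (\ref{generalized}) holding in $\ell^{2,k}(\Z^d)$, hence pointwise, so $u$ is a classical solution.

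Alternatively one can follow the weak-solution route set up just above: produce a weak solution by a finite-dimensional Galerkin truncation — project onto $\mathrm{span}\{\delta_m:|m|\le n\}$, solve the resulting linear ODE system on $[0,T]$, and pass to a weak-$*$ limit using the uniform bound of Theorem \ref{energy estimate2} — and then observe that this weak solution coincides with the solution above and inherits its $C^1$ regularity via the same bootstrap. I expect the only genuine subtlety, in either approach, is keeping the weighted ($k=1$) estimates honest, because the multiplication operator $\langle m\rangle$ does not commute with the nonlocal convolution operator $\partial_{jk}$; but this is precisely what Theorem 2.14 in \cite{26} provides, so once that input is granted the rest is routine bookkeeping, and the two cases $k=0,1$ run in parallel.
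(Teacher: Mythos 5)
Your proof is correct, and it takes a genuinely different route from the paper. The paper runs the classical duality argument for linear PDE: it proves an a priori estimate for the adjoint operator $L$ on test functions (inequality (\ref{k})), defines a bounded linear functional $\ell_F$ on the range $V=L^{*}C_0^{\infty}$, extends it by Hahn--Banach to $L^{1}([0,T];\ell^{2,-k}(\Z^{d}))$, reads off a weak solution $u\in L^{\infty}([0,T];\ell^{2,k}(\Z^{d}))$ from the dual space, and then bootstraps the regularity to $C^{1}$. You instead observe that in the discrete setting the spatial operator is \emph{bounded}: by Theorem 2.14 in \cite{26} (which the paper itself invokes in the proof of Theorem \ref{energy estimate2}) one has $\|\partial_{jk}v\|_{\ell^{2,k}}\lesssim\|v\|_{\ell^{2,k}}$, and multiplication by $g^{jk}(\cdot,t)\in\ell^{\infty}$ is bounded on the weighted space, so $A(t)=i\,g^{jk}(\cdot,t)\partial_{jk}$ is a norm-continuous family in $\mathcal{B}(\ell^{2,k}(\Z^{d}))$ and the equation is just a linear ODE in a Banach space; Picard iteration with the $\frac{(MT)^{n}}{n!}$ bound then gives the fixed point, and the fundamental theorem of calculus upgrades it to $C^{1}$. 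Your route is more elementary and arguably cleaner: it delivers uniqueness of the solution for free from the contraction, and it sidesteps the delicate points in the paper's argument (identifying the dual of $L^{1}([0,T];\ell^{2,-k})$, and the lifting step around (\ref{lllk}), which as printed even drops the $\partial_{jk}$). What the paper's method buys is generality --- it is the argument one would need if the spatial operator were unbounded --- but that generality is not needed here. Your flagged subtlety (noncommutativity of the weight $\langle m\rangle$ with the convolution operator $\partial_{jk}$ in the case $k=1$) is real and is exactly what the cited Theorem 2.14 resolves, so the argument closes. The Galerkin alternative you sketch would also work but is superfluous given the boundedness of $A(t)$.
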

		\begin{proof}
			We first establish the existence of a weak solution and then lift it to a classical solution. Notice that we can assume the initial data is vanishing, otherwise we can consider $\widetilde{u}=u-u_{0}$. Next, we claim that
			\begin{equation}\label{k}
				\|\phi(\cdot,t)\|_{\ell^{2,-s}(\Z^{d})}\lesssim_{T}\int_{0}^{T}\|L\phi(\cdot,\tau)\|_{\ell^{2,-s}(\Z^{d})} d\tau, \quad \forall \phi \in C_{0}^{\infty}((0,T)\times \Z^{d}), \forall t\in [0,T].
			\end{equation}
			It suffices to prove the following estimate for any $v\in C^{1}([0,T];\ell^{2,-s}(\Z^{d}))$
			\begin{equation*}
				\|v(\cdot,t)\|_{\ell^{2,-s}(\Z^{d})}\lesssim_{T,g^{jk}}\left(\|v(\cdot,0)\|_{\ell^{2,-s}(\Z^{d})}+\int_{0}^{T}\|Lv(\cdot,\tau)\|_{\ell^{2,-s}(\Z^{d})} d\tau\right).
			\end{equation*}
			The proof of this estimate is similar to the proof in Theorem \ref{energy estimate2}, with the consideration of total mass and differentiation with respect to $t$, so we omitted it here.
			\par 
			Then we define the following linear space $V$ and the linear functional $\ell_{F}$ on it 
			\begin{equation*}
				V:=L^{\ast}C_{0}^{\infty}(\Z^{d}\times (0,T))=\lbrace Lv; v\in C_{0}^{\infty}(\Z^{d}\times (0,T)) \rbrace,
			\end{equation*} 
			\begin{equation*}
				\ell_{F}(Lv):=\int_{0}^{T}\sum_{m\in\Z^{d}}F(m,t)\overline{v(m,t)}dt.
			\end{equation*}
			Based on the estimate (\ref{k}), we can use the Cauchy-Schwarz inequality to derive the following boundedness
			\begin{equation*}
				|\ell_{F}(Lv)|\le \int_{0}^{T}\|F(\cdot,t)\|_{\ell^{2,s}(\Z^{d})}\|v(\cdot,t)\|_{\ell^{2,-s}(\Z^{d})} dt\lesssim_{T,F,g^{jk}}\int_{0}^{T}\|Lv(\cdot,t)\|_{\ell^{2,-s}(\Z^{d})}dt.
			\end{equation*}
			Since the space $V$ can be naturally embedded in $L^{1}([0,T];\ell^{2,-s}(\Z^{d}))$, then from the Hahn-Banach Theorem, $\ell_{F}$ can be extended to the whole  $L^{1}([0,T];\ell^{2,-s}(\Z^{d}))$. Notice that the dual space of $L^{1}([0,T];\ell^{2,-s}(\Z^{d}))$ is $L^{\infty}([0,T];\ell^{2,s}(\Z^{d}))$, we can obtain the weak solution $u\in L^{\infty}([0,T];\ell^{2,s}(\Z^{d}))$ to the equation $(\ref{generalized})$. Considering the following equation in the weak derivative sense 
			\begin{equation}\label{lllk}
				i\partial_{t}u(x,t)+g^{jk}(x,t)u(x,t)=F(x,t),
			\end{equation} 
			we can immediately deduce that $u\in C([0,T];\ell^{2,s}(\Z^{d}))$. Applying above identity (\ref{lllk}) again, with the regularity assumptions on $g^{jk}, F$, we can lift the weak solution $u$ to the classical solution, which belongs to $C^{1}([0,T];\ell^{2,s}(\Z^{d}))$.
		\end{proof}
		Next, we will give a proof of Theorem \ref{1}.
		
		\begin{proof}[Proof of Theorem \ref{1}]
			We decompose the whole proof into 4 steps.

			\noindent
			\textbf{Step 1:}
			We first set $u_{-1}\equiv 0$ and construct the iteration as follows to approximate the exact solution
			\begin{equation}\label{iteration}
				\left\{
				\begin{aligned}
					& i\partial_{t} u_{m}(x,t)+ g^{jk}(u_{m-1},\partial u_{m-1})\partial_{jk} u_{m}(x,t) =F(u_{m-1},\partial u_{m-1}),  \\
					& u_{m}(x,0) = u_{0}(x), \quad (x,t)\in\Z^d\times \R.
				\end{aligned}
				\right.
			\end{equation}
			Based on the existence and the uniqueness theory in Theorem $\ref{foundation}$, the above iteration (\ref{iteration}) is well-defined and each $u_{m}$ is a classical solution. Similarly, we introduce the weighted total mass for each $u_{m}$ as follows
			\begin{equation*}
				M_{m}(t):=\|u_{m}(\cdot,t)\|_{\ell^{2,s}(\Z^{d})}.
			\end{equation*}
			We will show that there exist $T>0, M>0$ such that  $M_{m}(t)\le M$, $\forall m,\forall t\in [0,T]$.

			\noindent
			\textbf{Step 2:}
			We prove this statement by induction. Taking $M$ large enough that $\|u_{0}\|_{\ell^{2,s}(\Z^{d})},1 \ll M$, then we can ensure $M_{0}(t)\le M$, $\forall t \in [0,1]$ from Theorem \ref{energy estimate2}. Then we suppose that there exists $0<T<1$, such that for $m\le n-1, \forall t\in [0,T]$, we have $M_{m}(t)\le M$. Then we will prove the case $m=n$.
			\par 
			Using the energy estimate in Theorem \ref{energy estimate2},  we obtain the inequality
			\begin{equation}\label{z}
				M_{n}(t)\lesssim \left(M_{n}(0)+\int_{0}^{t}\|F(u_{n-1},\partial u_{n-1})\|_{\ell^{2,s}(\Z^{d})} d\tau\right)\cdot \exp\left(C\int_{0}^{t}\sum_{j,k}\|g^{jk}(u_{n-1},\partial u_{n-1})\|_{\ell^{\infty}(\Z^{d})}d\tau\right).
			\end{equation}
			From the assumptions on $F,g^{jk}$ and induction hypothesis, the following inequalities hold 
			\begin{equation*}
				|F(u_{n-1},\partial u_{n-1})|\lesssim_{M}|u_{n-1}|+|\partial u_{n-1}|, \quad \sum_{j,k}\|g^{jk}(u_{n-1},\partial u_{n-1})\|_{\ell^{\infty}(\Z^{d})}\lesssim_{M}1.
			\end{equation*}
			Then we insert the above inequalities into the energy estimate (\ref{z}) and derive
			\begin{equation*}
				M_{n}(t)\le C_{1}(M_{n}(0)+C_{2}Mt)\cdot e^{C_{3}t},
			\end{equation*}
			where $C_{1}$ is independent of $M$ and $C_{2}, C_{3}$ depend on $M$. Notice that $M_{n}(0)$ is independent of $n, T, M$, thus we can take $T\ll M,1$, which is also independent of $n$, such that $M_{n}(t)\le M$. Thus, we complete the induction and $M_{n}(t)\le M$, $\forall t\in[0,T],\forall n$.

			\noindent
			\textbf{Step 3:}
			Next, we will show that $\lbrace u_{m}\rbrace$ is a Cauchy sequence in $C^{1}([0,T];\ell^{2,s}(\Z^{d}))$. Notice that, from the iteration in (\ref{iteration}), it suffices to show $\lbrace u_{m}\rbrace$ is a Cauchy sequence in $C([0,T];\ell^{2,s}(\Z^{d}))$ and iteration gives the following equations
			\begin{equation*}
				i\partial_{t}(u_{m}-u_{m-1})+g^{jk}(u_{m-1},\partial u_{m})\cdot \partial_{jk}(u_{m}-u_{m-1})=(\ast),
			\end{equation*}
			\begin{equation*}
				(\ast)=F(u_{m-1},\partial u_{m-1})-F(u_{m-2},\partial u_{m-2})+\left[g^{jk}(u_{m-2},\partial u_{m-2})-g^{jk}(u_{m-1},\partial u_{m-1})\right]\cdot \partial_{jk}u_{m-1}.
			\end{equation*}
			Thus, from the uniform boundedness of $\|u_{m}(\cdot,t)\|_{\ell^{2,s}(\Z^{d})}$ and the assumptions on $g^{jk}, F$, we deduce that
			\begin{equation*}
				(\ast)=O_{M}(|u_{m-1}-u_{m-2}|+|\partial u_{m-1}-\partial u_{m-2}|).
			\end{equation*} 
			Then we define $C_{m}(t)$ as follows to describe the difference of each term in $\lbrace u_{m} \rbrace$
			\begin{equation*}
				C_{m}(t):=\|u_{m}(\cdot,t)-u_{m-1}(\cdot,t)\|_{\ell^{2,s}(\Z^{d})}.
			\end{equation*}
			From the above estimate, we can apply the energy estimate in Theorem \ref{energy estimate2} and obtain 
			\begin{equation}\label{x}
				C_{m}(t)\le C\int_{0}^{t}C_{m-1}(\tau)d\tau,
			\end{equation}
			where $C$ is a positive constant that only depends on $M,T$. Then we apply (\ref{x}) for $m$ times and get the following inequality
			\begin{equation*}
				C_{m}(t)\le C^{m}\int \int \cdots\int_{0\le \tau_{1}\le \tau_{2}\le\cdots\le \tau_{m}\le t}C_{0}(\tau_{1}) d\tau_{1}d\tau_{2}\cdot d\tau_{m}\lesssim \frac{(Ct)^{m}}{m!}.
			\end{equation*}
			Thus, $\lbrace u_{m}\rbrace$ is a Cauchy sequence in $C^{1}([0,T];\ell^{2,s}(\Z^{d}))$ and we denote its limit as $u$, then $u\in C^{1}([0,T];\ell^{2,s}(\Z^{d}))$ is a classical solution to the equation (\ref{nonlinear}).

			\noindent
			\textbf{Step 4:}
			In the last step, we will show the continuation criterion. From the above argument, we have seen that if $\|u(\cdot,t)\|_{\ell^{2,s}(\Z^{d})}$ is bounded in $\left[0,T^{\ast}\right)$, then we can extend the solution over $T^{\ast}$. Therefore, we just need to show the weaker assumption $\|u(\cdot,t)\|_{\ell^{\infty}(\Z^{d})}$ is bounded in $\left[0,T^{\ast}\right)$ can imply the stronger assumption  $\|u(\cdot,t)\|_{\ell^{2,s}(\Z^{d})}$ is bounded in $\left[0,T^{\ast}\right)$.
			\par 
			Letting $n\to \infty$ in the estimate (\ref{z}), we can obtain 
			\begin{equation*}
				M(t)\lesssim \left(M(0)+\int_{0}^{t}\|F(u,\partial u)\|_{\ell^{2,s}(\Z^{d})} ds\right)\cdot \exp\left(C\int_{0}^{t}\sum_{j,k}\|g^{jk}(u,\partial u)\|_{\ell^{\infty}(\Z^{d})}ds\right),
			\end{equation*}
			where $M(t)=\|u(\cdot,t)\|_{\ell^{2,s}(\Z^{d})}$. Notice that the following inequalities 
			\begin{equation*}
				|F(u,\partial u)|\lesssim_{M}|u|+|\partial u|, \quad \sum_{j,k}\|g^{jk}(u,\partial u)\|_{\ell^{\infty}(\Z^{d})}\lesssim_{M} 1,
			\end{equation*}
			only require $\|u(\cdot,t)\|_{\ell^{\infty}(\Z^{d})}$ is uniformly bounded in $t\in [0,T]$. Therefore, we can further derive the estimate as follows
			\begin{equation*}
				M(t)\lesssim_{M,T^{\ast}} \left(M(0)+\int_{0}^{t}M(\tau)d\tau\right).
			\end{equation*}
			Then the Gronwall inequality directly ensures the boundedness of $\|u(\cdot,t)\|_{\ell^{2,s}(\Z^{d})}$, that's prove the continuation criterion.
		\end{proof}
		As a direct corollary of the above Theorem \ref{1}, we can obtain the global existence of discrete semilinear Schr\"{o}dinger equation (\ref{NLS}).
		
		\begin{proof}[Proof of Theorem \ref{2}]
			Based on the continuation criterion in Theorem \ref{1}, it suffices to show $\|u(\cdot,t)\|_{\ell^{2}(\Z^{d})}$ is bounded. Notice that, we have the mass conservation in Theorem \ref{conservation}, which completes the whole proof.
		\end{proof}
		\begin{rem}\label{backward}
			Notice that, the following backward discrete semilinear Schr\"{o}dinger equation 
			\begin{equation}\label{v}
				\left\{
				\begin{aligned}
					& -i\partial_{t} u(x,t)+ \dfrac{1}{2}\Delta u(x,t) =\mu|u|^{p-1}u,\;  \mu=\pm 1, \; p>1,\\
					& u(x,0) = u_{0}(x), \quad (x,t)\in\Z^d\times \R,
				\end{aligned}
				\right.
			\end{equation}
			also has a global classical solution with the initial data $u_{0}\in \ell^{2,s}(\Z^{d})$. To be more concrete, the equation (\ref{v}) also has the following mass conservation 
			\begin{equation*}
				\|u(\cdot,t)\|_{\ell^{2}(\Z^{d})}\equiv\|u_{0}\|_{\ell^{2}(\Z^{d})},
			\end{equation*}
			which can be similarly derived as in Theorem \ref{conservation}. Then, based on the continuation criterion given by Theorem \ref{1}, we can immediately obtain the existence of a  global solution to the backward discrete semilinear Schr\"{o}dinger equation (\ref{v}). Furthermore, the global well-posedness result can be generalized to the following equation
			\begin{equation*}
				\left\{
				\begin{aligned}
					& i\partial_{t} u(x,t)+\alpha\Delta u(x,t) =\beta F(|u|^{2})u, \; \alpha, \beta \in \mathbb{R}\\
					& u(x,0) = u_{0}(x), \quad (x,t)\in\Z^d\times \R,
				\end{aligned}
				\right.
			\end{equation*}
			where $F\in C(\mathbb{R})$. In particular, the DNLS with saturable nonlinearity, in which $F(u)=\frac{1}{1+|u|}$, has a unique and global classical solution. For more studies on the saturable case, we refer to \cite{36,37,38}.
		\end{rem}

		\section{The smooth dependence and analytic dependence}
		In this section, we will show that the solution to the equation (\ref{NLS}) depends smoothly on the initial data, when $p$ is an odd positive integer, which we call the algebraic case. To specifically illustrate this idea, we need to briefly recall the definition of the Fr\'{e}chet derivative and its higher order extension, see pages 619, 627 in \cite{4}.
		\begin{defi}
			Let $U$ and $V$ be normed spaces and map $T: U\to V$ is possibly nonlinear. If there exists a bounded linear operator $T'(u)\in \mathcal{B}(U,V)$, for some $u\in U$, s.t.
			\begin{equation*}
				\lim_{\|\Delta u\|\to 0}\frac{\|T(u+\Delta u)-T(u)-T'(u)\Delta u\|}{\|\Delta u\|}=0,
			\end{equation*}
			for all $\Delta u\in U$, then $T'(u)$ is called the Fr\'{e}chet derivative of $T$ at $u$. The second order Fr\'{e}chet derivative of $T$ is defined as the Fr\'{e}chet derivative of $T'$ and so on and so forth. Equivalently, we can express the higher order Fr\'{e}chet derivatives as multilinear maps, which provide a more explicit idea of the higher order Fr\'{e}chet derivatives. 
			\par 
			Suppose the Fr\'{e}chet derivative exists, then we define the second order Fr\'{e}chet derivative  at $u\in U$  as 
			\begin{equation*}
				T''(u)(v_{1},v_{2})=\lim_{t\to 0}\frac{T'(u+tv_{2})(v_{1})-T'(u)(v_{1})}{t},
			\end{equation*}
			for all $v_{1},v_{2} \in U$, if the above limit exists and $T''(u)\in \mathcal{B}(U \times U, V)$. For convenience, we denote $T^{(1)}=T'$ and $T^{(2)}=T''$, then we inductively define higher order derivatives. Suppose $T^{(n-1)}$ exists, the $n^{th}$-order Fr\'{e}chet derivative at $u\in U$ is defined by
			\begin{equation*}
				T^{(n)}(u)(v_{1},\cdots, v_{n})=\lim_{t\to 0}\frac{T^{n-1}(u+tv_{n})(v_{1},\cdots,v_{n-1})-T^{n-1}(u)(v_{1},\cdots,v_{n-1})}{t},
			\end{equation*}
			for all $v_{1},\cdots, v_{n} \in U$, if the above limit exists and $T^{(n)}\in \mathcal{B}(U^{n}, V)$, where $U^{n}=\underbrace{U\times \cdots \times U}_{n\; times}$.
		\end{defi}
		
		Before we state the definition of $\Psi^{k}$, we shall first introduce some useful notations to simplify their expressions.
		\begin{defi}\label{def1}
			For $n\in \Z^{+}$, $\Upsilon=(\epsilon_{1},\epsilon_{2},\cdots,\epsilon_{n})$ is called $n$-signal, if $\epsilon_{j}\in \lbrace 0,1\rbrace$, $j=1,2,\cdots,n$.
			\par Suppose $1\le k_{1}<k_{2}<\cdots<k_{p-1}<k_{p}=n$, then we call  
			\begin{equation*}
				\tau= \left[(i_{1},\cdots, i_{k_{1}});(i_{k_{1}+1},\cdots, i_{k_{2}});\cdots;(i_{k_{p-1}+1},\cdots, i_{k_{p}})\right],
			\end{equation*}
			the $n$-partition, if $\lbrace i_{1},i_{2},\cdots, i_{k_{p}-1}, i_{k_{p}}\rbrace$=$\lbrace 1,2,\cdots, n\rbrace$. Additionally, we use $\|\tau\|:=p$ to denote the number of groups in this partition.
			\par 
			For example, $\tau=\left[(1,3);(2);(4)\right]$ is 4-partition, with $\|\tau\|=3$ and $\tau'=\left[(1,5,6);(3,2);(4);(7)\right]$ is 7-partition, with $\|\tau'\|=4$. It's noteworthy that we will distinguish the cases with same group but different arrangements, such as 
			\begin{equation*}
				\left[(1,3);(2);(4)\right]\ne \left[(3,1);(2);(4)\right],\quad \left[(1,5,6);(3,2);(4);(7)\right]\ne \left[(1,5,6);(3,2);(7);(4)\right].
			\end{equation*}
			We define the set $\widetilde{\tau}$, which consists of $p$ elements with the following form
			\begin{equation*}
				\left[(i_{1},\cdots, i_{k_{1}});\cdots;(i_{k_{m-1}+1},\cdots,i_{k_{m}},n+1);\cdots;(i_{k_{p-1}+1},\cdots, i_{k_{p}})\right],
			\end{equation*}
			where $1\le m\le p$. For example, if $\tau=\left[(1,3);(2);(4)\right]$, then $\widetilde{\tau}$ is the set 
			\begin{equation*}
				\lbrace \left[(1,3,5);(2);(4)\right], \left[(1,3);(2,5);(4)\right], \left[(1,3);(2);(4,5)\right]\rbrace.
			\end{equation*}
		\end{defi}
		\begin{defi}\label{def2}
			For $s\in[0,1]$, suppose $U=\ell^{2,s}(\Z^{d})$, $V=C([0,T];\ell^{2,s}(\Z^{d}))$, $L=\lbrace L^{k} \rbrace_{k=1}^{\infty}$ is a family of multilinear maps, where $L^{k}: U^{k}\to V$ is $k$-linear map. Then for $n$-partition $\tau$ and $\|\tau\|$-signal $\Upsilon$
			\begin{equation*}
				\tau=\left[(i_{1},\cdots, i_{k_{1}});(i_{k_{1}+1},\cdots, i_{k_{2}});\cdots;(i_{k_{p-1}+1},\cdots, i_{k_{p}})\right],\quad  \Upsilon=(\epsilon_{1},\epsilon_{2},\cdots,\epsilon_{p}),
			\end{equation*}
			we define $L^{\tau, \Upsilon}:U^{n}\to V$, for all $v_{1},v_{2},\cdots, v_{n}\in U$, as follows,
			\begin{equation*}
				L^{\tau, \Upsilon}(v_{1},v_{2},\cdots, v_{n}):=[L^{k_{1}}(v_{i_{1}},\cdots,v_{i_{k_{1}}})]^{(\epsilon_{1})}\cdots[L^{k_{p}-k_{p-1}}(v_{i_{k_{p-1}+1}},\cdots,v_{i_{k_{p}}})]^{(\epsilon_{p})},
			\end{equation*}
			where $[A]^{(\epsilon)}$ means $A$, if $\epsilon=0$, or $\overline{A}$, if $\epsilon=1$. For example, if $\tau=\left[(1,3);(2);(4)\right]$ and $\Upsilon=(0,1,1)$, then we have 
			\begin{equation}
				L^{\tau, \Upsilon}(v_{1},v_{2},v_{3},v_{4})=L^{2}(v_{1},v_{3})\cdot \overline{L^{1}(v_{2})}\cdot \overline{L^{1}(v_{4})}.
			\end{equation}
			
		\end{defi}
		\begin{defi}\label{higher order}
			For $s\in[0,1]$, $u_{0}\in \ell^{2,s}(\Z^{d})$, the $k$-linear map $\Psi^{k}(u_{0}):\underbrace{\ell^{2,s}(\Z^{d})\times\cdots \times \ell^{2,s}(\Z^{d})}_{k\; times} \to C([0,T];\ell^{2,s}(\Z^{d}))$, for convenience we sometimes denote it as $\Psi_{u_{0}}^{k}$, is defined by following induction. For $k=1$, we define $\Psi_{u_{0}}^{1}$ as the solution map of the equation (\ref{r}), i.e. $\forall v_{1}\in \ell^{2,s}(\Z^{d})$, $\Psi_{u_{0}}^{1}(v_{1})$ satisfies
			\begin{equation}\label{r}
				\left\{
				\begin{aligned}
					& i\partial_{t} \Psi_{u_{0}}^{1}(v_{1})(x,t)+ \dfrac{1}{2}\Delta \Psi_{u_{0}}^{1}(v_{1})(x,t) =\mu\left(\frac{p+1}{2}|\Phi(u_{0})|^{p-1}\cdot \Psi_{u_{0}}^{1}(v_{1})+\frac{p-1}{2}|\Phi(u_{0})|^{p-3}\Phi(u_{0})^{2}\cdot \overline{\Psi_{u_{0}}^{1}(v_{1})}\right),\\
					& \Psi_{u_{0}}^{1}(v_{1})(x,0) = v_{1}(x), \quad (x,t)\in\Z^d\times \R.
				\end{aligned}
				\right.
			\end{equation}
			Suppose $\Psi_{u_{0}}^{k}$ is defined and for any $ v_{1},\cdots,v_{k-1},v_{k}\in \ell^{2,s}(\Z^{d})$, $\omega=\Psi_{u_{0}}^{k}(v_{1},\cdots,v_{k-1},v_{k})$ satisfies the following equation (\ref{i})
			\begin{equation}\label{i}
				\left\{
				\begin{aligned}
					& i\partial_{t} \omega(x,t)+ \dfrac{1}{2}\Delta \omega(x,t) =\mu\left(\sum_{\tau^{k}}\sum_{\Upsilon}F_{\tau^{k},\Upsilon}(\Phi(u_{0}))\Psi_{u_{0}}^{\tau^{k},\Upsilon}\right),\\
					& \omega(x,0) = p_{k}(x), \quad (x,t)\in\Z^d\times \R,
				\end{aligned}
				\right.
			\end{equation}
			where we follow the notations in Definition \ref{def1} and Definition \ref{def2}, with $L=\Psi_{u_{0}}, L^{k}=\Psi_{u_{0}}^{k}$. The first summation in the equation (\ref{i}) is over all $k$-partitions, and the second summation is over all $\|\tau^{k}\|$-signals.
			\par
			Then we define $\Psi_{u_{0}}^{k+1}$ as the solution map of the equation (\ref{u}), i.e. for any $v_{1},\cdots,v_{k},v_{k+1}\in \ell^{2,s}(\Z^{d})$, $\omega=\Psi_{u_{0}}^{k+1}(v_{1},\cdots,v_{k},v_{k+1})$ satisfies
			\begin{equation}\label{u}
				\left\{
				\begin{aligned}
					& i\partial_{t} \omega(x,t)+ \dfrac{1}{2}\Delta \omega(x,t) =\mu\left(\sum_{\tau^{k}}\sum_{\Upsilon}\left(\frac{d F_{\tau^{k},\Upsilon}}{dz}\Psi_{u_{0}}^{1}(v_{k+1})+\frac{d F_{\tau^{k},\Upsilon}}{d\overline{z}}\overline{\Psi_{u_{0}}^{1}(v_{k+1})}\right)\Psi_{u_{0}}^{\tau^{k},\Upsilon}+(\ast)_{\tau^{k},\Upsilon}\right),\\
					& \omega(x,0) = 0, \quad (x,t)\in\Z^d\times \R.
				\end{aligned}
				\right.
			\end{equation}
			The last term $(\ast)_{\tau^{k},\Upsilon}$ is given by $(\ast)_{\tau^{k},\Upsilon}=\sum_{\tau'\in\widetilde{\tau^{k}} }F_{\tau^{k},\Upsilon}(\Phi(u_{0}))\Psi_{u_{0}}^{\tau',\Upsilon}$, where the meaning of  notation $\widetilde{\tau^{k}}$ follows from Definition (\ref{def1}).
		\end{defi}
		\begin{rem}
			To better illustrate the above notations in definition, we will calculate $\Psi_{u_{0}}^{2}(v_{1},v_{2})$, for any $v_{1},v_{2}\in\ell^{2,s}(\Z^{d})$, with $p$ big enough to avoid trivialty. First, we notice that, in this case, there are only one partition $\tau^{1}$ and two signals $\Upsilon$ as follows
			\begin{equation*}
				\tau^{1}=[(1)], \quad \Upsilon=(0) \;or\; (1).
			\end{equation*}
			The corresponding $F_{\tau^{1},\Upsilon}(\Phi(u_{0}))$ and $\Psi_{u_{0}}^{\tau^{1},\Upsilon}$ are given by
			\begin{equation*}
				F_{\tau^{1},\Upsilon}(\Phi(u_{0}))=\frac{p+1}{2}|\Phi(u_{0})|^{p-1}\; or \;\frac{p-1}{2}|\Phi(u_{0})|^{p-3}\Phi(u_{0})^{2},\quad \Psi_{u_{0}}^{\tau^{1},\Upsilon}=\Psi_{u_{0}}^{1}(v_{1})\; or \;\overline{\Psi_{u_{0}}^{1}(v_{1})}.
			\end{equation*}
			Thus, from the definition and calculation, we have the following correspondences
			\begin{equation*}
				\frac{d F_{\tau^{k},\Upsilon}}{dz}=\frac{d F_{\tau^{k},\Upsilon}}{dz}(\Phi(u_{0}))=\frac{p^{2}-1}{4}|\Phi(u_{0})|^{p-3}\overline{\Phi(u_{0})}\; or\; \frac{p^{2}-1}{4}|\Phi(u_{0})|^{p-3}\Phi(u_{0}).
			\end{equation*}
			The calculation of $\frac{d F_{\tau^{k},\Upsilon}}{d\overline{z}}$ is similar, we omitted. For the term in $(\ast)_{\tau^{1}, \Upsilon}$, we have $\widetilde{\tau^{1}}=\lbrace [(1,2)]\rbrace$ and the following correspondences 
			\begin{equation*}
				(\ast)_{\tau^{1}, \Upsilon}=\frac{p+1}{2}|\Phi(u_{0})|^{p-1}\Psi_{u_{0}}^{2}(v_{1},v_{2})\left(=\frac{p+1}{2}|\Phi(u_{0})|^{p-1}\omega\right)\; 
			\end{equation*}
			\begin{equation*}
				or
			\end{equation*}
			\begin{equation*}
				=\frac{p-1}{2}|\Phi(u_{0})|^{p-3}\Phi(u_{0})^{2} \overline{\Psi_{u_{0}}^{2}(v_{1},v_{2})}\left(=\frac{p-1}{2}|\Phi(u_{0})|^{p-3}\Phi(u_{0})^{2}\overline{\omega}\right).
			\end{equation*}
			Then substituting the above calculation into the equation (\ref{u}), we derive the solution map given by $\Psi_{u_{0}}^{2}$.
		\end{rem}
		Then we state the relationship between $\Phi$ and $\Psi$, which will lead a smooth dependence on the initial data. 
		
		\begin{proof}[Proof of Theorem \ref{3}]
			We just check the case $k=1,s=0$, as the rest of the cases are similar and even simpler. From the definition of the Fr\'{e}chet derivative, we just need to prove that for $v_{0},u_{0}\in \ell^{2}(\Z^{d})$ and $\|v_{0}-u_{0}\|_{\ell^{2}(\Z^{d})}\to 0$, we have
			\begin{equation}\label{y}
				\|\Phi(v_{0})-\Phi(u_{0})-\Psi_{u_{0}}^{1}(v_{0}-u_{0})\|_{C([0,T];\ell^{2}(\Z^{d}))}=o(\|v_{0}-u_{0}\|_{\ell^{2}(\Z^{d})}).
			\end{equation}
			For convenience, we denote $\omega:=\Phi(v_{0})-\Phi(u_{0})-\Psi_{u_{0}}^{1}(v_{0}-u_{0})$. 
			\par
			From the Duhamel formula and definition of $\Phi$, we have the following identity
			\begin{equation*}
				\left[\Phi(v_{0})-\Phi(u_{0})\right](x,t)=e^{it\Delta/2}(v_{0}-u_{0})-i\mu\int_{0}^{t}e^{i(t-s)\Delta/2}\left(|\Phi(u_{0})|^{p-1}\Phi(u_{0})-|\Phi(v_{0})|^{p-1}\Phi(v_{0})\right) ds .
			\end{equation*}
			Similarly, using the Duhamel formula of $\Psi_{u_{0}}^{1}(v_{0}-u_{0})$ and the following first order Taylor expansion of $|z|^{p-1}z$ 
			\begin{equation*}
				|z+\Delta z|^{p-1}(z+\Delta z)=|z|^{p-1}z+\frac{p+1}{2}|z|^{p-1}\cdot\Delta z+ \frac{p-1}{2}|z|^{p-3}z^{2}\cdot \overline{\Delta z}+o(|\Delta z|),
			\end{equation*}
			we can obtain the identity as follows
			\begin{equation*}
				\omega(x,t)=-i\mu\int_{0}^{t}e^{i(t-s)\Delta/2}\left(\frac{p+1}{2}|\Phi(u_{0})|^{p-1}\cdot \omega(x,s)+\frac{p-1}{2}|\Phi(u_{0})|^{p-3}\Phi(u_{0})^{2}\cdot \overline{\omega(x,s)}+o(\Phi(v_{0})-\Phi(u_{0}))\right) ds.
			\end{equation*}
			Applying $\ell^{2}(\Z^{d})$-norm on both sides and using the Minkowski inequality, we can deduce that
			\begin{equation}\label{t}
				\|\omega(\cdot,t)\|_{\ell^{2}(\Z^{d})}\lesssim_{u_{0}} \int_{0}^{t}\|\omega(\cdot,s)\|_{\ell^{2}(\Z^{d})}ds+ o_{T}(\|v_{0}-u_{0}\|_{\ell^{2}(\Z^{d})}),
			\end{equation}
			where the last term $o_{T}(\|v_{0}-u_{0}\|_{\ell^{2}(\Z^{d})})$ comes from the energy estimate in Theorem \ref{energy estimate1} and the Gronwall inequality. Then we apply the Gronwall inequality again to (\ref{t}), we can immediately deduce (\ref{y}). 
			\par 
			Next, we will obtain the boundedness of $\Psi_{u_{0}}^{1}$. Still using the Duhamel formula of the equation (\ref{r}) and applying $\ell^{2}(\Z^{d})$-norm on both side, with the Minkowski inequality, we can similarly get 
			\begin{equation*}
				\|\Psi_{u_{0}}^{1}(v_{1})(\cdot,t)\|_{\ell^{2}(\Z^{d})}\lesssim_{u_{0}}\|v_{1}\|_{\ell^{2}(\Z^{d})}+\int_{0}^{t} \|\Psi_{u_{0}}^{1}(v_{1})(\cdot,s)\|_{\ell^{2}(\Z^{d})} ds.
			\end{equation*}
			Thus, the Gronwall inequality will directly yield the boundedness of $\Psi_{u_{0}}^{1}$.
		\end{proof}
		Based on the above smooth dependence result, we can actually establish the analytic dependence result in some sense. Before we formally state this result, we shall briefly recall the famous Cauchy-Kovalevskaya Theorem, see Theorem 1 in \cite{33}.
		\begin{lemma}\label{CK}
			Suppose $T, \delta>0$ and $F:(a-\delta,a+\delta)\to \mathbb{R} $ is real analytic near $a$ and $y=y(x)$ is the unique solution to the following ordinary differential equation (ODE)
			\begin{equation}
				\left\{
				\begin{aligned}
					& \frac{dy}{dx}=F(y(x)),  \\
					& y(0)= a, \quad x\in [-T,T],
				\end{aligned}
				\right.
			\end{equation}
			then $y=y(x)$ is also real analytic near zero.
		\end{lemma}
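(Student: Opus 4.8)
The plan is to prove this by the classical \emph{method of majorants}. After the translation $y \mapsto y-a$ we may assume $a=0$, so that $F$ is real analytic near the origin, $F(z)=\sum_{k=0}^{\infty}c_k z^k$ on some interval $(-r_0,r_0)$, and there are constants $M>0$ and $0<r\le r_0$ with $|c_k|\le M r^{-k}$ for every $k$. I would look for the solution as a formal power series $y(x)=\sum_{n=0}^{\infty}a_n x^n$ and reduce the lemma to showing that this series has a positive radius of convergence: once that is known, term-by-term differentiation shows the sum is a genuine analytic solution of the initial value problem, and by the uniqueness already assumed in the statement (which in any case follows from the local Lipschitz property of $F$) it coincides with $y$.

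First I would substitute the ansatz into $y'=F(y)$, $y(0)=0$. This forces $a_0=0$ and produces a recursion of the form $(n+1)a_{n+1}=P_n(c_0,\dots,c_n;\,a_0,\dots,a_n)$, where $P_n$ is a universal polynomial obtained by expanding the composition $F\bigl(\sum a_n x^n\bigr)$. The structural fact at the core of the argument --- which I would verify by induction while tracking that expansion --- is that \emph{every $P_n$ has nonnegative integer coefficients}. It follows that if $G(z)=\sum_k g_k z^k$ is any majorant of $F$, meaning $|c_k|\le g_k$ for all $k$, then the power series solution $Y(x)=\sum_n A_n x^n$ of $Y'=G(Y)$, $Y(0)=0$ satisfies the coefficientwise bound $|a_n|\le A_n$ for all $n$: indeed $A_n\ge 0$ inductively, and $|a_{n+1}|=\tfrac{1}{n+1}|P_n(c;a)|\le \tfrac{1}{n+1}P_n(|c|;|a|)\le \tfrac{1}{n+1}P_n(g;A)=A_{n+1}$ by nonnegativity of the coefficients of $P_n$ together with the inductive hypothesis.

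Then I would take the explicit majorant $G(z)=M/(1-z/r)=\sum_{k\ge 0}M r^{-k}z^k$, which dominates $F$ by the estimate above. The scalar equation $Y'=M/(1-Y/r)$, $Y(0)=0$ is separable: integrating $(1-s/r)\,ds=M\,dx$ gives $Y-Y^2/(2r)=Mx$, hence
\begin{equation*}
	Y(x)=r\Bigl(1-\sqrt{1-2Mx/r}\,\Bigr),
\end{equation*}
which is real analytic on $|x|<r/(2M)$ (and has nonnegative Taylor coefficients, consistent with the recursion). Therefore $\sum_n A_n x^n$ converges there, so $\sum_n |a_n|x^n$ converges there too, and $\sum_n a_n x^n$ defines a real analytic function on a neighborhood of $0$; differentiating term by term shows it solves $y'=F(y)$ with value $0$ at $x=0$, so by uniqueness it equals $y$ near $0$, which is the claim.

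The step I expect to be the main obstacle is the bookkeeping behind the nonnegativity of the coefficients of $P_n$ and the resulting majorant comparison $|a_n|\le A_n$ --- this is the genuine content of the method and must be set up with some care. As an alternative that only relocates this difficulty, one could complexify: extend $F$ to a holomorphic $\widetilde F$ on a complex disc about $0$, invoke the existence theorem for holomorphic ODEs to obtain a holomorphic solution $w(z)$ of $w'=\widetilde F(w)$, $w(0)=0$ near $0$, observe that its restriction to the real axis solves the real ODE and hence equals $y$ by uniqueness, and conclude that $y$ extends holomorphically and is therefore real analytic near zero --- but the proof of that holomorphic existence theorem is itself a majorant (or Picard-iteration) argument.
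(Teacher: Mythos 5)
Your proof is correct. Note, however, that the paper does not actually prove this lemma: it states it as a known result (the analytic dependence theorem for ODEs, which the paper attributes to Cauchy--Kovalevskaya) and simply cites a reference, so there is no in-paper argument to compare against. What you have written is the standard self-contained proof by the method of majorants, and every step checks out: the recursion $(n+1)a_{n+1}=P_n(c_0,\dots,c_n;a_0,\dots,a_n)$ obtained by expanding $F\bigl(\sum a_m x^m\bigr)$ (with $a_0=0$ after translation) indeed has universal polynomials $P_n$ with nonnegative integer (multinomial) coefficients, which is exactly what makes the comparison $|a_n|\le A_n$ against the majorant system go through; the explicit majorant $G(z)=M/(1-z/r)$ yields the separable equation whose solution $Y(x)=r\bigl(1-\sqrt{1-2Mx/r}\bigr)$ is analytic on $|x|<r/(2M)$ with nonnegative Taylor coefficients; and the identification with $y$ via uniqueness (local Lipschitz continuity of $F$) is legitimate. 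The only point worth spelling out in a final write-up is the one you flag yourself: the justification that the formal identity of power series yields an actual solution, i.e., that for $|x|$ small enough one has $|y(x)|<r$ so that the double series defining $F(y(x))$ converges absolutely and may be rearranged. Your alternative route via complexification and the holomorphic existence theorem is also valid and, as you say, merely relocates the same majorant estimate.
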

		
		\begin{proof}[Proof of Theorem \ref{4}]
			For simplicity, we only show the analyticity of $u^{(\varepsilon)}$ in the case $p=3, s=0,\mu=1$, and the general case is parallel. From the definition of the $k^{th}$-order Fr\'{e}chet derivative, we can immediately obtain the following simple identity
			\begin{equation*}
				\left.\frac{d^{k}}{d\varepsilon^{k}} u^{(\varepsilon)}(x,t)\right|_{\varepsilon=\lambda}=\Psi^{k}(\lambda u_{0})(u_{0},\cdots, u_{0}), \; k=0,1,\cdots
			\end{equation*}
			Therefore, Theorem \ref{3} directly implies that $u^{(\varepsilon)}$ is a smooth function with respect to $\varepsilon$. Next, we will use Taylor expansion to express every order derivative of $u^{(\varepsilon)}$. Suppose that we have the Taylor expansion as follows, for convenience we require $0<\varepsilon<1$,
			\begin{equation}\label{o}
				u^{(\lambda+\varepsilon)}(x,t)=u^{(\lambda)}(x,t)+\varepsilon u_{1}^{(\lambda)}(x,t)+\varepsilon^{2} u_{2}^{(\lambda)}(x,t)+\cdots+\varepsilon^{n} u_{n}^{(\lambda)}(x,t)+o(\varepsilon^{n}).
			\end{equation}
			Inserting (\ref{o}) into equation (\ref{p}) and comparing coefficients, we can obtain the following equations (for simplicity, we just state first three equations, which are representative enough)
			\begin{equation}\label{a}
				\left\{
				\begin{aligned}
					& i\partial_{t} u^{(\lambda)}(x,t)+ \dfrac{1}{2}\Delta u^{(\lambda)}(x,t) =(u^{(\lambda)})^{2}\overline{u^{(\lambda)}},  \\
					& u^{(\lambda)}(x,0) = \lambda u_{0}(x), \quad (x,t)\in\Z^d\times \R,
				\end{aligned}
				\right.
			\end{equation} 
			\begin{equation}\label{s}
				\left\{
				\begin{aligned}
					& i\partial_{t} u_{1}^{(\lambda)}(x,t)+ \dfrac{1}{2}\Delta u_{1}^{(\lambda)}(x,t)=(u^{(\lambda)})^{2}\overline{u_{1}^{(\lambda)}}+2u^{(\lambda)}\overline{u^{(\lambda)}}u_{1}^{(\lambda)},  \\
					& u_{1}^{(\lambda)}(x,0) = u_{0}(x), \quad (x,t)\in\Z^d\times \R,
				\end{aligned}
				\right.
			\end{equation} 
			\begin{equation}\label{d}
				\left\{
				\begin{aligned}
					& i\partial_{t} u_{2}^{(\lambda)}(x,t)+ \dfrac{1}{2}\Delta u_{2}^{(\lambda)}(x,t)=(u^{(\lambda)})^{2}\overline{u_{2}^{(\lambda)}}+2u^{(\lambda)}\overline{u^{(\lambda)}}u_{2}^{(\lambda)}+(u_{1}^{(\lambda)})^{2}\overline{u^{(\lambda)}}+2u_{1}^{(\lambda)}\overline{u_{1}^{(\lambda)}}u^{(\lambda)},  \\
					& u_{2}^{(\lambda)}(x,0) = 0, \quad (x,t)\in\Z^d\times \R.
				\end{aligned}
				\right.
			\end{equation}
			Next, we inductively control the $\ell^{2}(\Z^{d})$-norm of each $u_{k}$. Choosing $R>0$, s.t. $\lambda, \|u_{0}\|_{\ell^{2}(Z^{d})}\ll R$. First, applying the mass conservation in Theorem 1.1 into the above equation (\ref{a}), we notice that $\|u^{(\lambda)}\|_{C([0,T];\ell^{2}(\Z^{d}))}=\lambda\|u_{0}\|_{\ell^{2}(\Z^{d})}\le C_{0}$, where $C_{0}$ is some constant only depends on $R$. Using the Duhamel formula of the equation (\ref{s}), we can obtain
			\begin{equation}\label{f}
				u_{1}^{(\lambda)}(x,t)=e^{it\Delta/2}u_{0}-i\mu\int_{0}^{t}e^{i(t-s)\Delta/2}\left[(u^{(\lambda)})^{2}\overline{u_{1}^{(\lambda)}}+2u^{(\lambda)}\overline{u^{(\lambda)}}u_{1}^{(\lambda)}\right] ds.
			\end{equation}
			Then, taking $\ell^{2}(\Z^{d})$-norm on both sides of (\ref{f}) and using the Minkowski inequality, we obtain the inequality as follows,
			\begin{equation}\label{g}
				\|u_{1}^{(\lambda)}(\cdot,t)\|_{\ell^{2}(\Z^{d})}\lesssim_{R}\|u_{0}\|_{\ell^{2}(\Z^{d})}+\int_{0}^{t}\|u_{1}^{(\lambda)}(\cdot,s)\|_{\ell^{2}(\Z^{d})}ds.
			\end{equation}
			Applying the Gronwall inequality on (\ref{g}), we can immediately conclude that there exists some constant $C_{1}$, s.t. $\|u_{1}^{(\lambda)}\|_{C([0,T];\ell^{2}(\Z^{d}))}\le C_{1}\|u_{0}\|_{\ell^{2}(\Z^{d})}$, where $C_{1}$ only depends on $R,T$. 
			\par 
			Next, we suppose that $\|u_{k}^{(\lambda)}\|_{C([0,T];\ell^{2}(\Z^{d}))}\le C_{k}\|u_{0}\|_{\ell^{2}(\Z^{d})}^{k}, \; k\le n$, where we regard $u_{0}^{\lambda}$ as $u^{(\lambda)}$. Then we need to estimate $\|u_{n+1}^{(\lambda)}\|_{C([0,T];\ell^{2}(\Z^{d}))}$ by $C_{n+1}\|u_{0}\|_{\ell^{2}(\Z^{d})}^{n+1}$, with some suitable constant $C_{n+1}$. 
			\par 
			From the similar estimates as in (\ref{f})-(\ref{g}) and simple product estimate as follows
			\begin{equation*}
				\|\prod_{i=1}^{m}f_{j}\|_{\ell^{2}(\Z^{d})}\le \prod_{i=1}^{m}\|f_{j}\|_{\ell^{2}(\Z^{d})},
			\end{equation*}
			we can derive the estimate $\|u_{n+1}^{(\lambda)}\|_{C([0,T];\ell^{2}(\Z^{d}))}\le C_{n+1}\|u_{0}\|_{\ell^{2}(\Z^{d})}^{n+1}$ by the appearance law of the inhomogeneous terms in the equations (\ref{a})-(\ref{d}) and induction, where constant $C_{n+1}$ is defined as follows
			\begin{equation}\label{h}
				C_{n+1}=K\sum_{k_{1}+k_{2}+k_{3}=n+1,\; 0\le k_{i}<n+1}C_{k_{1}}C_{k_{2}}C_{k_{3}}, \quad n\ge 1,
			\end{equation}
			for some constant $K>0$ only depends on $R,T$. Then we just need to show that the following series, given by the identity (\ref{h}),
			\begin{equation}\label{j}
				f(x)\equiv \sum_{k=0}^{\infty}C_{k}x^{k},
			\end{equation}
			will converge in $x\in (-\delta,\delta)$, for some $\delta>0$. Indeed, to show the Taylor expansion of $u^{(\lambda+\varepsilon)}$ is equal to  $u^{(\lambda+\varepsilon)}$ itself, it suffices to prove the Lagrange remainder 
			\begin{equation*}
				\frac{\varepsilon^{n+1}}{(n+1)!}\left.\frac{d^{n+1}}{d\varepsilon^{n+1}}u^{(\varepsilon)}(x,t)\right|_{\varepsilon=\lambda'}=\varepsilon^{n+1}u_{n+1}^{(\lambda')}(x,t), \quad \lambda<\lambda'<\lambda+\varepsilon,
			\end{equation*} 
			will converge to zero as $n\to \infty$. Notice that the above estimate is uniformly hold for any $u^{(\lambda')}$, as the choice of $\lbrace C_{k} \rbrace$ only depends on $R,T$ and is consistent for $ \lambda<\lambda'<\lambda+\varepsilon$.
			\par 
			Therefore, if we can show the convergence of (\ref{j}), then by the d'Alembert criterion, we can immediately derive
			\begin{equation*}
				|u_{n+1}^{(\lambda')}(x,t)|^{\frac{1}{n+1}}\le\|u_{n+1}^{(\lambda')}\|_{C([0,T];\ell^{2}(\Z^{d}))} ^{\frac{1}{n+1}}\le (C_{n+1})^{\frac{1}{n+1}}\cdot \|u_{0}\|_{\ell^{2}(\Z^{d})}\le \frac{1}{\delta}\|u_{0}\|_{\ell^{2}(\Z^{d})}.
			\end{equation*}
			Thus, for any $\varepsilon<\frac{\delta}{\|u_{0}\|_{\ell^{2}(\Z^{d})}}$, the Lagrange remainder will converge to zero as $n\to \infty$, which ensures the analyticity of $u^{\varepsilon}$ at point $\lambda$.
			\par 
			Notice that $y=f(x)$ is the implicit function of the following equation
			\begin{equation}\label{kk}
				Ay-By^{3}=Cx+D,
			\end{equation} 
			where the constant $A,B,C,D$ are given by
			\begin{equation*}
				A:=3KC_{0}^{2}+1,\quad B:=K, \quad C:=C_{1},\quad D:=2KC_{0}^{3}+C_{0}.
			\end{equation*}
			Differentiating the equation (\ref{kk}) with respect to $x$, we can immediately show that $y$ satisfies the following ODE
			\begin{equation}\label{l}
				\left\{
				\begin{aligned}
					& \frac{dy}{dx}=\frac{C}{A-3By^{2}},  \\
					& y(0)= \gamma, \quad x\in \R,
				\end{aligned}
				\right.
			\end{equation}
			where $\gamma$ satisfies $A\gamma-B\gamma^{3}=D$. To ensure that $A-3By^{2}>0$ in some neighborhood of $x=0$, it suffices to check $D\le \frac{2A}{3}\sqrt{\frac{A}{3B}}$, which is a direct result of calculation. Thus, we can apply Lemma \ref{CK} to ODE (\ref{l}) and derive the analyticity of solution $y$ in some neighborhood of $x=0$. 
			\par We now finish the whole proof of the analyticity of $u^{\varepsilon}$ for all fixed $u_{0}\in \ell^{2}(\Z^{d})$.
		\end{proof}
		\begin{rem}
			From the proof of Theorem \ref{4}, we can further conclude that the lower bound of convergence radius of $u^{\varepsilon}(x,t)$ is uniform in $x\in \Z^{d}$ and $t$ in any compact time interval. 
		\end{rem}
		\section{The Scattering theory of discrete semilinear Sch\"{o}dinger equations}
		In this section, we will use the Strichartz estimates to establish the existence of the wave operator and the asymptotic completeness in the energy space $\ell^{2}(\Z^{d})$.
		\par We first state the following useful Strichartz estimates, see Theorem 1.2 in \cite{6}, which is induced by the decay estimate of propagator $e^{it\Delta/2}$ in Theorem 1 from \cite{5}.
		\begin{thm}
			For discrete free Sch\"{o}dinger equation 
			\begin{equation}\label{free}
				\left\{
				\begin{aligned}
					& i\partial_{t}u+\frac{1}{2}\Delta u=0,  \\
					& u(x,0)= u_{0}(x), \quad (x,t)\in\Z^d\times \R,
				\end{aligned}
				\right.
			\end{equation}
			there exist two estimates as follows
			\begin{equation*}
				\|u(\cdot,t)\|_{\ell^{\infty}(\Z^{d})}\lesssim \frac{1}{|t|^{d/3}}\|u_{0}\|_{\ell^{1}(\Z^{d})}, \quad \|u(\cdot,t)\|_{\ell^{2}(\Z^{d})}=\|u_{0}\|_{\ell^{2}(\Z^{d})}.
			\end{equation*}
		\end{thm}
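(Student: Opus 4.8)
The plan is to treat the two assertions separately: the $\ell^2$ identity is a soft consequence of Plancherel, whereas the dispersive bound reduces, through the product structure of the propagator, to a single one--dimensional oscillatory integral estimate that one settles with van der Corput's lemma. For the conservation of the $\ell^2$ norm, I would pass to the Fourier side: the discrete Laplacian has Fourier symbol $-K(\xi)$ with $K(\xi)=\sum_{j=1}^d(2-2\cos\xi_j)\ge0$, so the solution of \eqref{free} is $\widehat u(\xi,t)=e^{-itK(\xi)/2}\widehat{u_0}(\xi)$ and $e^{it\Delta/2}$ acts on $L^2(\mathbb T^d)$ as multiplication by the unimodular function $e^{-itK(\xi)/2}$. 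Since $\mathcal F$ is an isometry between $\ell^2(\Z^d)$ and $L^2(\mathbb T^d)$, this gives $\|u(\cdot,t)\|_{\ell^2(\Z^d)}=\|\widehat{u_0}\|_{L^2(\mathbb T^d)}=\|u_0\|_{\ell^2(\Z^d)}$.

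For the dispersive estimate I would write $u(\cdot,t)=u_0\ast I(\cdot,t)$, where
\[
I(n,t)=\frac{1}{(2\pi)^d}\int_{\mathbb T^d}e^{in\cdot\xi-itK(\xi)/2}\,d\xi
=e^{-itd}\prod_{j=1}^d\Bigl(\frac{1}{2\pi}\int_0^{2\pi}e^{i(n_j\xi_j+t\cos\xi_j)}\,d\xi_j\Bigr),
\]
so by Young's inequality $\|u(\cdot,t)\|_{\ell^\infty(\Z^d)}\le\|I(\cdot,t)\|_{\ell^\infty(\Z^d)}\|u_0\|_{\ell^1(\Z^d)}$, and it suffices to prove the one--dimensional bound
\[
\sup_{n\in\Z}\ \Bigl|\frac{1}{2\pi}\int_0^{2\pi}e^{i(n\xi+t\cos\xi)}\,d\xi\Bigr|\ \lesssim\ |t|^{-1/3}\qquad(|t|\ge1);
\]
the $d$-fold product then yields the claimed $|t|^{-d/3}$, while for $|t|\le1$ the estimate is trivial since each factor is bounded by $1$. (Up to a unimodular constant this integral is the Bessel function $J_n(t)$, and the displayed inequality is the classical uniform bound $\sup_n|J_n(t)|\lesssim|t|^{-1/3}$.)

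The one--dimensional estimate is the heart of the argument. Writing the phase as $t\phi(\xi)$ with $\phi(\xi)=\beta\xi+\cos\xi$ and $\beta=n/t$, one has $\phi''(\xi)=-\cos\xi$ and $\phi'''(\xi)=\sin\xi$. I would split $[0,2\pi)$ into the two arcs where $|\cos\xi|\ge\tfrac12$, on which $|\phi''|\ge\tfrac12$ and van der Corput's lemma with second derivatives gives a contribution $\lesssim|t|^{-1/2}\le|t|^{-1/3}$, and the complementary two arcs around $\pi/2$ and $3\pi/2$, on which $|\sin\xi|\ge\tfrac{\sqrt3}{2}$, so $|\phi'''|\ge\tfrac12$ and van der Corput's lemma with third derivatives gives a contribution $\lesssim|t|^{-1/3}$. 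Since the constants in van der Corput's lemma are insensitive to the linear term $\beta\xi$ and the number of arcs is bounded, these bounds are uniform in $n$, which is exactly what is needed.

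The main obstacle is precisely this uniformity in $n$, equivalently in $\beta=n/t$. When $|\beta|$ is bounded away from $1$ the phase has either no stationary point (rapid decay by integration by parts) or a nondegenerate one (decay $|t|^{-1/2}$); but in the transitional regime $\beta\approx\pm1$ the stationary point moves to $\xi\approx\pm\pi/2$, where $\phi''$ vanishes while $\phi'''\ne0$, so the critical point is degenerate and Airy--type, and it is this degeneracy that is responsible for the slower $|t|^{-1/3}$ rate. One must therefore check that the constants in the above case split stay bounded as $\beta\to\pm1$, which the split by $|\cos\xi|$ handles uniformly. Assembling the one--dimensional bound into the product kernel and applying Young's inequality gives the dispersive estimate, and together with the $\ell^2$ isometry this is exactly the stated pair of estimates (and is the decay input behind the Strichartz estimates of Theorem 1.2 in \cite{6}).
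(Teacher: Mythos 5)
Your proposal is correct. Note that the paper itself does not prove this theorem: it imports the dispersive bound from Theorem 1 of \cite{5} and the resulting Strichartz estimates from Theorem 1.2 of \cite{6}, so there is no in-paper argument to compare against. What you write is precisely the standard proof from that literature: Plancherel gives the $\ell^2$ identity since the propagator is a unimodular Fourier multiplier $e^{\pm itK(\xi)/2}$ with $K(\xi)=\sum_j(2-2\cos\xi_j)$; the kernel factors into one-dimensional integrals $i^{n_j}J_{n_j}(t)$; Young's inequality reduces the $\ell^1\to\ell^\infty$ bound to the uniform Bessel estimate $\sup_n|J_n(t)|\lesssim|t|^{-1/3}$; and your van der Corput split by $|\cos\xi|\ge\tfrac12$ versus $|\sin\xi|\ge\tfrac{\sqrt3}{2}$ correctly yields constants independent of the linear term $\beta\xi$, hence uniform in $n$, including the degenerate Airy regime $\beta\approx\pm1$ that forces the $|t|^{-1/3}$ rate.
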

		\begin{coro}\label{Strichartz}
			We call the exponent pair $(q,r)$ is admissible, if the pair satisfies
			\begin{equation*}
				q,r\ge 2, \quad (q,r,d)\ne(2,\infty,3),\quad \frac{1}{q}+\frac{d}{3r}\le \frac{d}{6}.
			\end{equation*}
			If the equality holds in the last inequality above, we say the exponent pair $(q,r)$ is sharp admissible.
			Then there exist the following three Strichartz estimates 
			\begin{equation*}
				\|e^{it\Delta/2}f\|_{L_{t}^{q}\ell_{x}^{r}(\mathbb{R}\times \Z^{d})}\lesssim \|f\|_{\ell^{2}(\Z^{d})},
			\end{equation*}
			\begin{equation*}
				\|\int_{\mathbb{R}}e^{-s\Delta/2}F(\cdot,s)ds\|_{\ell^{2}(\Z^{d})}\lesssim \|F\|_{L_{t}^{q'}\ell_{x}^{r'}(\mathbb{R}\times \Z^{d})},
			\end{equation*}
			\begin{equation*}
				\|\int_{s<t}e^{i(t-s)\Delta/2}F(\cdot,s)ds\|_{L_{t}^{q}\ell_{x}^{r}(\mathbb{R}\times \Z^{d})}\lesssim \|F\|_{L_{t}^{\widetilde{q}'}\ell_{x}^{\widetilde{r}'}(\mathbb{R}\times \Z^{d})},
			\end{equation*}
			where $(q,r), (\widetilde{q},\widetilde{r})$ are admissible exponent pairs. 
		\end{coro}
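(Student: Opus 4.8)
The plan is to derive all three bounds from the abstract Strichartz framework of Keel and Tao applied to the propagator $U(t):=e^{it\Delta/2}$ on $\ell^{2}(\Z^{d})$, so that the dispersive estimate of the preceding theorem is the only genuinely new ingredient. First I would record the two structural hypotheses fed into the abstract theorem. The $\ell^{2}$ conservation law stated above gives $\|U(t)f\|_{\ell^{2}(\Z^{d})}=\|f\|_{\ell^{2}(\Z^{d})}$, so $U(t)$ is unitary, $\|U(t)\|_{\ell^{2}\to\ell^{2}}=1$, and $U(t)U(s)^{\ast}=e^{i(t-s)\Delta/2}$; moreover, since $t\mapsto e^{i(t-s)\Delta/2}g$ solves the free equation (\ref{free}) with data $g$ prescribed at time $s$, the dispersive bound of the preceding theorem reads
\begin{equation*}
\|U(t)U(s)^{\ast}g\|_{\ell^{\infty}(\Z^{d})}\lesssim |t-s|^{-d/3}\,\|g\|_{\ell^{1}(\Z^{d})} .
\end{equation*}
Thus $U$ satisfies the energy estimate together with the untruncated decay estimate with decay exponent $\sigma=d/3$, which is exactly the data required by the Keel--Tao machinery.

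Next I would run the $TT^{\ast}$ argument. Interpolating the $\ell^{1}\to\ell^{\infty}$ decay bound against the $\ell^{2}\to\ell^{2}$ bound gives $\|U(t)U(s)^{\ast}\|_{\ell^{r'}\to\ell^{r}}\lesssim|t-s|^{-\frac{d}{3}(1-\frac{2}{r})}$, and estimating $\|U(\cdot)f\|_{L^{q}_{t}\ell^{r}_{x}}^{2}$ through the bilinear form $\iint\langle U(t)U(s)^{\ast}F(s),F(t)\rangle\,ds\,dt$, H\"older in $x$, and the Hardy--Littlewood--Sobolev inequality in $t$ yields the homogeneous estimate precisely when $\frac{1}{q}+\frac{d}{3r}\le\frac{d}{6}$ (with equality on the sharp line, and the inequality then following by interpolation with the trivial $L^{\infty}_{t}\ell^{2}_{x}$ bound). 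The second estimate is the formal adjoint of the first, and the untruncated inhomogeneous estimate for two admissible pairs $(q,r)$ and $(\widetilde{q},\widetilde{r})$ follows by composing the homogeneous estimate with its dual. Finally, the retarded version restricted to $\{s<t\}$ is recovered from the untruncated one by the Christ--Kiselev lemma, which applies whenever $q>\widetilde{q}'$, i.e.\ away from the double endpoint $q=\widetilde{q}=2$.

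I expect the endpoint $q=2$ to be the only real obstacle. It enters the admissible range only when $\sigma=d/3>1$, that is $d\ge4$; there the Christ--Kiselev lemma fails at the double endpoint and the retarded inhomogeneous estimate must instead be obtained directly from the Keel--Tao bilinear interpolation, namely a dyadic decomposition of $\{s<t\}$ in powers of $|t-s|$ followed by interpolation of the resulting bilinear form. When $d=3$ one has $\sigma=1$ and the would-be endpoint pair is $(2,\infty)$, which is genuinely excluded --- hence the hypothesis $(q,r,d)\ne(2,\infty,3)$ --- while for $d=1,2$ a one-line check shows that every pair with $\frac{1}{q}+\frac{d}{3r}\le\frac{d}{6}$ already has $q>2$, so the elementary argument above suffices with no endpoint considerations. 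This is the content of Theorem 1.2 in \cite{6}, to which I would refer for the remaining technical details.
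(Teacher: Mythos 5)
Your proposal is correct and is essentially the intended argument: the paper offers no proof of this corollary at all, but simply cites Theorem 1.2 of the reference, and what you have written out is precisely the standard Keel--Tao/$TT^{\ast}$ derivation (unitarity plus the $|t|^{-d/3}$ dispersive bound, Riesz--Thorin, Hardy--Littlewood--Sobolev on the sharp line, duality for the second estimate, composition plus Christ--Kiselev for the retarded one, with the endpoint $(2,\infty,3)$ excluded) that underlies that citation. One small mechanical point: to pass from the sharp line $\frac{1}{q}+\frac{d}{3r}=\frac{d}{6}$ to the full region $\frac{1}{q}+\frac{d}{3r}\le\frac{d}{6}$, interpolation with the trivial $L^{\infty}_{t}\ell^{2}_{x}$ bound does not suffice, since the interpolants of a sharp-line pair with $(\infty,2)$ remain on the sharp line; the correct (and equally trivial) mechanism in the lattice setting is the nesting $\ell^{r_{0}}(\Z^{d})\hookrightarrow\ell^{r}(\Z^{d})$ for $r\ge r_{0}$, applied with $r_{0}$ the sharp-line exponent associated to the given $q$. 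With that replacement your argument is complete.
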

		For convenience, we introduce the Strichartz space and its dual space, which can state the above Strichartz estimates in a uniform manner.
		\begin{defi}\label{m}
			For some time interval $I$, the Strichartz space $S(I\times \Z^{d})$ is defined as the closure of the Schwartz function with the following norm
			\begin{equation*}
				\|v\|_{S(I\times \Z^{d})}:=\sup_{(q,r) \; admissible}\|v\|_{L_{t}^{q}\ell_{x}^{r}(I\times \Z^{d})}.
			\end{equation*}
			And we define $N(I\times \Z^{d})$ as the dual space of Strichartz space $S(I\times \Z^{d})$. By definition, we see that
			\begin{equation*}
				\|F\|_{N(I\times \Z^{d})}\le \|F\|_{L_{t}^{q'}\ell_{x}^{r'}(I\times \Z^{d})}.
			\end{equation*}
			Thus, the Strichartz estimates in Corollary \ref{Strichartz} can be rewritten as the following uniform estimates, with some positive constant $C_{0}$,
			\begin{equation*}
				\|e^{it\Delta/2}f\|_{S(\mathbb{R}\times \Z^{d})}\le C_{0} \|f\|_{\ell^{2}(\Z^{d})},
			\end{equation*}
			\begin{equation*}
				\|\int_{\mathbb{R}}e^{-s\Delta/2}F(\cdot,s)ds\|_{\ell^{2}(\Z^{d})}\le C_{0} \|F\|_{N(\mathbb{R}\times \Z^{d})},
			\end{equation*}
			\begin{equation*}
				\|\int_{s<t}e^{i(t-s)\Delta/2}F(\cdot,s)ds\|_{S(\mathbb{R}\times \Z^{d})}\le C_{0} \|F\|_{N(\mathbb{R}\times \Z^{d})}.
			\end{equation*}
		\end{defi}
		Next, we use the above Strichartz estimates to give a proof of the existence and the continuity of the wave operator.
		\begin{proof}[Proof of Theorem \ref{5}]
			From the Duhamel formula and the definition of the scattering, it suffices to solve the following equation
			\begin{equation}\label{b}
				u(x,t)=e^{it\Delta/2}u_{+}+i\mu \int_{t}^{\infty}e^{i(t-s)\Delta/2}\left(|u(\cdot,s)|^{p-1}u(\cdot,s)\right) ds, \quad t\in \left[0,+\infty\right).
			\end{equation}
			To solve the above equation (\ref{b}), we decompose the whole question into two parts. Fixing $T$ large enough, we first solve the equation (\ref{b}) in the time interval $\left[T,+\infty\right)$ and then extend it to the whole time interval $\left[0,+\infty\right)$.
			\par With the assumption $6\le (p-1)d$, it's not hard to verify the existence of exponents $0<q_{i}, r_{i}<\infty, i=1,2,3,4,5$ satisfying the following conditions
			\begin{equation*}
				\frac{1}{q_{4}}+\frac{1}{q_{5}}=\frac{1}{q_{3}'}, \quad \frac{1}{r_{4}}+\frac{1}{r_{5}}=\frac{1}{r_{3}'},
			\end{equation*} 
			\begin{equation*}
				q_{1}=(p-1)q_{4}, \quad r_{1}=(p-1)r_{4},
			\end{equation*}
			and $(q_{1},r_{1}), (q_{2},r_{2}), (q_{3},r_{3}), (q_{5},r_{5}), (pq_{2}',pr_{2}')$ are admissible exponent pairs. Indeed, we can take 
			\begin{equation*}
				q_{1}=r_{1}=q_{3}=r_{3}=q_{5}=r_{5}=p+1,\quad q_{2}=r_{2}=\frac{2(d+3)}{d},\quad q_{4}=r_{4}=\frac{p+1}{p-1}.
			\end{equation*} 
			Then we introduce the following norm, which is obviously controlled by the Strichartz norm 
			\begin{equation*}
				\|v\|_{S_{0}(I\times \Z^{d})}:=\|v\|_{L_{t}^{q_{1}}\ell_{x}^{r_{1}}(I\times \Z^{d})}+\|v\|_{L_{t}^{pq_{2}'}\ell_{x}^{pr_{2}'}(I\times \Z^{d})},
			\end{equation*}
			where $I$ is a time interval. Then, from the Strichartz estimates in Corollary \ref{Strichartz}, we can fix $T$ large enough that 
			\begin{equation*}
				\|e^{it\Delta/2}u_{+}\|_{S_{0}(\left[T,+\infty\right)\times \Z^{d})}\le \varepsilon,
			\end{equation*}
			where $\varepsilon$ will be determined later. Then we will use the iteration below to construct a solution to the equation (\ref{b})
			\begin{equation}\label{n}
				u_{n}(x,t)=e^{it\Delta/2}u_{+}+i\mu \int_{t}^{\infty}e^{i(t-s)\Delta/2}\left(|u_{n-1}(\cdot,s)|^{p-1}u_{n-1}(\cdot,s)\right) ds, \quad n\ge 0,
			\end{equation}
			where we set $u_{-1}\equiv0$. For convenience, we introduce the following maps $D,N$ to simplify our next discussion
			\begin{equation*}
				D: F(x,t)\mapsto i\mu \int_{t}^{\infty}e^{i(t-s)\Delta/2}F(\cdot,s) ds,
			\end{equation*} 
			\begin{equation*}
				N: v(x,t)\mapsto |v(x,t)|^{p-1}v(x,t).
			\end{equation*}
			Then the equation (\ref{n}) can be equivalently written as 
			\begin{equation}\label{m}
				u_{n}(x,t)=e^{it\Delta/2}u_{+}+DN(u_{n-1}).
			\end{equation}
			Next, we claim that if $\varepsilon$ small enough, then the following propositions are true
			\begin{equation*}
				(P_{n}): \quad \|u_{n}\|_{S_{0}(\left[T,+\infty\right)\times\Z^{d})}\le 4\varepsilon,
			\end{equation*}
			\begin{equation*}
				\widetilde{(P_{n})}: \quad \|N(u_{n})\|_{L_{t}^{q_{2}'}\ell_{x}^{r_{2}'}(\left[T,+\infty\right)\times \Z^{d})}=\||u_{n}|^{p-1}u_{n}\|_{L_{t}^{q_{2}'}\ell_{x}^{r_{2}'}(\left[T,+\infty\right)\times \Z^{d})}\le \frac{\varepsilon}{C_{0}},
			\end{equation*}
			where $C_{0}$ is the constant in Strichartz estimates, see Definition \ref{m}.
			\par 
			First, from the choice of $T$, we see that $(P_{0})$ is true, and then we just need to prove the above propositions by induction, with the following chain
			\begin{equation*}
				(P_{n})\Rightarrow \widetilde{(P_{n})}\Rightarrow (P_{n+1}).
			\end{equation*}
			For $(P_{n})\Rightarrow \widetilde{(P_{n})}$, we just need to let $\varepsilon\ll \frac{1}{C_{0}}$ and observe that
			\begin{equation*}
				\|N(u_{n})\|_{L_{t}^{q_{2}'}\ell_{x}^{r_{2}'}(\left[T,+\infty\right)\times \Z^{d})}=\|u_{n}\|_{L_{t}^{pq_{2}'}\ell_{x}^{pr_{2}'}(\left[T,+\infty\right)\times\Z^{d})}^{p}\le (4\varepsilon)^{p}\le \frac{\varepsilon}{C_{0}}.
			\end{equation*}
			For $\widetilde{(P_{n})}\Rightarrow (P_{n+1})$, we apply $S_{0}(\left[T,+\infty\right)\times\Z^{d})$-norm on iteration (\ref{m}) and obtain
			\begin{equation}
				\|u_{n}\|_{S_{0}(\left[T,+\infty\right)\times\Z^{d})}\le \|e^{it\Delta/2}u_{+}\|_{S_{0}(\left[T,+\infty\right)\times\Z^{d})}+\|DN(u_{n})\|_{S_{0}(\left[T,+\infty\right)\times\Z^{d})}
			\end{equation}
			\begin{equation*}
				\le \varepsilon+2C_{0}\|N(u_{n})\|_{L_{t}^{q_{2}'}\ell_{x}^{r_{2}'}(\left[T,+\infty\right)\times \Z^{d})}\le \varepsilon+2C_{0}\cdot \frac{\varepsilon}{C_{0}}< 4\varepsilon.
			\end{equation*}
			Thus, we complete the induction and, from the propositions $(P_{n})$ and $\widetilde{(P_{n})}$, control the norms of $u_{n}$ and $|u_{n}|^{p-1}u_{n}$ on a very small scale, which will be essential in our following contraction argument.
			\par 
			From the H\"{o}lder inequality and the proposition $(P_{n})$, we also notice that
			\begin{equation*}
				\|N(u_{n})-N(u_{n-1})\|_{N(\left[T,+\infty\right)\times \Z^{d}))}\le \|N(u_{n})-N(u_{n-1})\|_{L_{t}^{q_{3}'}\ell_{x}^{r_{3}'}(\left[T,+\infty\right)\times \Z^{d})}
			\end{equation*}
			\begin{equation*}
				\lesssim \||u_{n}|^{p-1}|u_{n}-u_{n-1}|\|_{L_{t}^{q_{3}'}\ell_{x}^{r_{3}'}(\left[T,+\infty\right)\times \Z^{d})}+\||u_{n-1}|^{p-1}|u_{n}-u_{n-1}|\|_{L_{t}^{q_{3}'}\ell_{x}^{r_{3}'}(\left[T,+\infty\right)\times \Z^{d})}
			\end{equation*}
			\begin{equation*}
				\lesssim \left(\|u_{n}\|_{L_{t}^{q_{1}}\ell_{x}^{r_{1}}(\left[T,+\infty\right)\times \Z^{d})}^{p-1}+\|u_{n-1}\|_{L_{t}^{q_{1}}\ell_{x}^{r_{1}}(\left[T,+\infty\right)\times \Z^{d})}^{p-1}\right)\cdot \|u_{n}-u_{n-1}\|_{L_{t}^{q_{5}}\ell_{x}^{r_{5}}(\left[T,+\infty\right)\times\Z^{d})}
			\end{equation*}
			\begin{equation*}
				\lesssim \varepsilon^{p-1}\cdot \|u_{n}-u_{n-1}\|_{S(\left[T,+\infty\right)\times \Z^{d})}.
			\end{equation*}
			Then if $\varepsilon \ll 1$, we can have
			\begin{equation*}
				\|N(u_{n})-N(u_{n-1})\|_{N(\left[T,+\infty\right)\times \Z^{d})}\le \frac{1}{2C_{0}}\cdot\|u_{n}-u_{n-1}\|_{S(\left[T,+\infty\right)\times \Z^{d})}.
			\end{equation*}
			Based on the above estimate, we can immediately have the following contraction
			\begin{equation*}
				\|u_{n+1}-u_{n}\|_{S(\left[T,+\infty\right)\times \Z^{d})}\le \|DN(u_{n})-DN(u_{n-1})\|_{S(\left[T,+\infty\right)\times \Z^{d})}
			\end{equation*}
			\begin{equation*}
				\le C_{0}\|N(u_{n})-N(u_{n-1})\|_{N(\left[T,+\infty\right)\times \Z^{d})}\le \frac{1}{2}\cdot\|u_{n}-u_{n-1}\|_{S(\left[T,+\infty\right)\times \Z^{d})}.
			\end{equation*}
			Thus, $\lbrace u_{n} \rbrace$ is a Cauchy sequence in $S(\left[T,+\infty\right)\times \Z^{d})\subseteq C(\left[T,+\infty\right);\ell^{2}(\Z^{d}))$. We denote its limit as $u$, which solves the equation (\ref{b}). Then we shall show that $u$ solves the original equation (\ref{NLS}). From the expression of identity (\ref{b}), we see that
			\begin{equation*}
				e^{-it\Delta/2}u(\cdot,t)=u_{+}+i\mu \int_{t}^{\infty}e^{-is\Delta/2}\left(|u(\cdot,s)|^{p-1}u(\cdot,s)\right) ds,
			\end{equation*}
			\begin{equation*}
				u_{+}(x)=e^{-iT\Delta/2}u(\cdot,T)-i\mu \int_{T}^{\infty}e^{-is\Delta/2}\left(|u(\cdot,s)|^{p-1}u(\cdot,s)\right) ds.
			\end{equation*}
			Then we combine the above two identities and obtain 
			\begin{equation}\label{qq}
				u(x,t)=e^{i(t-T)\Delta/2}u(\cdot,T)-i\mu \int_{T}^{t}e^{i(t-s)\Delta/2}\left(|u(\cdot,s)|^{p-1}u(\cdot,s)\right)ds,
			\end{equation} 
			which directly shows $u$ satisfies the equation (\ref{NLS}).
			\par 
			Finally, we just need to use the global existence theory of backward discrete semilinear Schr\"{o}dinger equation in Remark \ref{backward} and extend the solution $u$ to the whole time interval $\left[0,+\infty\right)$. Then we can take $u_{0}(x)=u(0)$ and the above proof shows the solution $u$, with initial data $u_{0}$, scatters to $e^{it\Delta/2}u_{+}$.
			\par The uniqueness of such initial data $u_{0}$ comes immediately from the conservation of total mass. Besides, the continuity of the wave operator $\Omega_{+}$ can be similarly deduced from the above iteration.
		\end{proof}
		\begin{rem}
			Notice that, in the discrete setting $\Z^{d}$, if $u\in C(\left[T,+\infty\right);\ell^{2}(\Z^{d}))$ and satisfies the identity (\ref{qq}), then it's actually a classical solution, as $|u(x,t)|^{p-1}u(x,t)$ in left-hand side also belongs to $C(\left[T,+\infty\right);\ell^{2}(\Z^{d}))$.
		\end{rem}
		Before we give the proof of the asymptotic completeness, we shall introduce several key lemmas.
		
		\begin{lemma}\label{today}
			If the exponent pair $(p_{0},q_{0})$ satisfies one of the following conditions
			\begin{equation}\label{condition}
				\left\{
				\begin{aligned}
					& \frac{6+d}{6p}\le \frac{1}{p_{0}}+\frac{d}{3q_{0}},\quad p\le p_{0},q_{0}\le2p,\quad (p_{0},q_{0},d)\ne (2p,p,3), \\
					& p-1<q_{0},p_{0}<\infty,\quad \frac{1}{p-1}\le \frac{1}{p_{0}}+\frac{d}{3q_{0}},\quad \frac{1}{2}\le \frac{d}{6}+\frac{p-1}{p_{0}},\quad 6\le (p-1)d,
				\end{aligned}
				\right.
			\end{equation}
			and $u$, the classical solution to the equation (\ref{NLS}), with the initial data $u_{0}\in \ell^{2}(\Z^{d})$, has boundedness below 
			\begin{equation}\label{cc}
				\|u\|_{L_{t}^{p_{0}}\ell_{x}^{q_{0}}(\left[0,+\infty\right)\times \Z^{d})}<\infty,
			\end{equation}
			then we have the asymptotic completeness of $u$.
		\end{lemma}
		\begin{proof}
			If the exponent pair $(p_{0},q_{0})$ satisfies the first condition, then it's not difficult to check the existence of some admissible exponent pair $(q,r)$ s.t. $(p_{0},q_{0})=(pq',pr')$. 
			\par 
			From the Duhamel formula, the solution $u$ scatters to some asymptotic state $u_{+}$ is equivalent to the conditional convergence of the integral $\int_{0}^{\infty}e^{it\Delta/2}(|u(\cdot,t)|^{p-1}u(\cdot,t))dt$ in $\ell^{2}(\Z^{d})$-norm. Thus, from the Strichartz estimates, it suffices to show $|u|^{p-1}u\in N(\left[0,+\infty\right)\times \Z^{d})$. Note that we have the following estimate
			\begin{equation*}
				\||u|^{p-1}u\|_{N(\left[0,+\infty\right)\times \Z^{d})}\le \||u|^{p-1}u\|_{L_{t}^{q'}\ell_{x}^{r'}(\left[0,+\infty\right)\times \Z^{d})}=\|u\|_{L_{t}^{p_{0}}\ell_{x}^{q_{0}}(\left[0,+\infty\right)\times \Z^{d})}^{p}<\infty.
			\end{equation*}
			Thus, we complete the proof of the first part.
			\par 
			If the exponent pair $(p_{0},q_{0})$ satisfies the second condition, then there exist $p_{i},q_{i}>0, i=1,2,3$ satisfying the following requirements
			\begin{equation}\label{dd}
				\frac{1}{p_{2}}+\frac{1}{p_{3}}=\frac{1}{q_{1}'}, \quad \frac{1}{q_{2}}+\frac{1}{q_{3}}=\frac{1}{r_{1}'}, 
			\end{equation}
			\begin{equation}\label{ff}
				p_{0}=(p-1)p_{2}, \quad q_{0}=(p-1)q_{2},
			\end{equation}
			where the exponent pairs $(q_{1},r_{1}), (p_{3},q_{3})$ are admissible. Indeed, the second condition is equivalent to 
			\begin{equation}\label{aa}
				1<q_{2},p_{2}<\infty,\quad 1\le \frac{1}{p_{2}}+\frac{d}{3q_{2}},\quad \frac{1}{2}\le \frac{d}{6}+\frac{1}{p_{2}},\quad 6\le (p-1)d,
			\end{equation}
			and, to prove the requirements (\ref{dd}) and (\ref{ff}), it's sufficient to show the existence of $p_{1},q_{1}>0 $ satisfying the following requirements
			\begin{equation*}
				\max\lbrace\frac{1}{2}-\frac{1}{p_{2}},0\rbrace\le\frac{1}{q_{1}}\le\min\lbrace \frac{1}{2}, 1-\frac{1}{p_{2}}\rbrace, \quad \max \lbrace\frac{1}{2}-\frac{1}{q_{2}},0\rbrace\le \frac{1}{r_{1}}\le  \min\lbrace \frac{1}{2}, 1-\frac{1}{q_{2}}\rbrace,
			\end{equation*}
			\begin{equation*}
				1+\frac{d}{6}-(\frac{1}{p_{2}}+\frac{d}{3q_{2}})\le \frac{1}{q_{1}}+\frac{d}{3r_{1}}\le \frac{d}{6}.
			\end{equation*}
			Based on condition (\ref{aa}), we can use the intermediate value theorem to show the existence of such $p_{1},q_{1}$.
			\par 
			From the assumption $6\le (p-1)d$ in the second condition, we can obtain some admissible exponent pair $(q,r)$, such that $(pq',pr')$ is also admissible. Then, just like the proof in the first part, we can derive that
			\begin{equation*}
				\||u|^{p-1}u\|_{N(\left[0,+\infty\right)\times \Z^{d})}\le \|u\|_{S(\left[0,+\infty\right)\times \Z^{d})}^{p}.
			\end{equation*}
			Therefore, it suffices to show the boundedness of $\|u\|_{S(\left[0,+\infty\right)\times \Z^{d})}$.  From the assumption $p_{0}<\infty$, we can decompose the whole positive real axis $\left[0,+\infty\right)$ into a finite number of sub-intervals, s.t. for each sub-interval $I$, we have 
			\begin{equation*}
				\|u\|_{L_{t}^{p_{0}}\ell_{x}^{q_{0}}(I\times \Z^{d})}\le \varepsilon,
			\end{equation*}
			where $\varepsilon>0$ to be determined later. Fixing one of the above sub-intervals $I$ and 
			applying the Strichartz estimates and the H\"{o}lder inequality, we can obtain the following estimate
			\begin{equation*}
				\|u\|_{S(I\times \Z^{d})}\lesssim \|u_{0}\|_{\ell^{2}(\Z^{d})}+\||u|^{p-1}u\|_{L_{t}^{q_{1}'}\ell_{x}^{r_{1}'}(I\times \Z^{d})}
			\end{equation*}
			\begin{equation*}
				\lesssim \|u_{0}\|_{\ell^{2}(\Z^{d})}+ \|u\|_{L_{t}^{p_{0}}\ell_{x}^{q_{0}}(I\times \Z^{d})}^{p-1}\|u\|_{L_{t}^{p_{3}}\ell_{x}^{q_{3}}(I\times \Z^{d})}\lesssim \|u_{0}\|_{\ell^{2}(\Z^{d})}+\varepsilon^{p-1}\|u\|_{S(I\times \Z^{d})}. 
			\end{equation*}
			Then we can take $\varepsilon\ll 1$ and deduce that
			\begin{equation*}
				\|u\|_{S(I\times \Z^{d})\lesssim_{u_{0}}} 1.
			\end{equation*}
			Summing over all sub-intervals, we can obtain the boundedness of $\|u\|_{S(\left[0,+\infty\right)\times \Z^{d})}$.
		\end{proof}
		
		\begin{rem}
			Obviously, the exponent pair $(p_{0},q_{0})$ satisfies one of two conditions in (\ref{condition}) exists. For example, when $d=p=3$, exponent pairs $(p_{0},q_{0})=(4,4),(\frac{5}{2},5)$ can satisfy the second condition. Another immediate but important fact is that the second condition is stronger than the first condition, when $3\le d$ and $6\le (p-1)d$. 
		\end{rem}
	   Next, we introduce a key lemma from Theorem 3 in \cite{5}, which shows the stability of a global classical solution with the small initial data.
	   \begin{lemma}\label{1003}
	   	If $6\le (p-1)d$, there exist $\varepsilon=\varepsilon(d)>0$ and constant $C=C(d)$, such that whenever $\|u_{0}\|_{\ell^{2}(\Z^{d})}\le \varepsilon$, the corresponding global solution $u$ satisfies the following bounds
	   	\begin{equation*}
	   		\|u\|_{L_{t}^{q}\ell_{x}^{r}(\left[0,+\infty\right)\times \Z^{d})}\le C\varepsilon,
	   	\end{equation*}
	   	for all admissible pairs $(q,r)$.
	   \end{lemma} 
		\begin{proof}[Proof of Theorem \ref{6}]
			From Lemma \ref{today} and Lemma \ref{1003}, it suffices to find admissible pairs $(p_{0},q_{0})$ satisfying the second condition in Lemma \ref{today}. 
			\par For the case $d\le 3$, we can take the ternary pairs $(d,p_{0},q_{0})$ as follows
			\begin{equation*}
				(d,p_{0},q_{0})=(1,\frac{6(p-1)}{5},2(p-1)),\quad (2,2(p-1),\frac{4(p-1)}{3}),\quad (3,2(p-1),2(p-1)).
			\end{equation*}
			\par For the remaining case $d\ge 4$, we let the ternary pairs $(d,p_{0},q_{0})$ be 
			\begin{equation*}
				(d,p_{0},q_{0})=\left\{
				\begin{aligned}
					&  (d,\frac{2(d+3)}{d},\frac{2(d+3)}{d}) \quad if \; 1+\frac{6}{d}\le p<3 \\
					& (d,2(p-1),\frac{2d(p-1)}{3}) \quad if \; p\le 3.
				\end{aligned}
				\right.
			\end{equation*}
			From direct calculation, we see that the above ternary pairs are admissible and satisfy the second condition in Lemma \ref{today}.
		\end{proof}
		\begin{rem}\label{10122}
			In the focusing case $\mu=-1$, for any $p>1$, there is no asymptotic completeness. To be more concrete, we can construct the following soliton solution for the focusing DNLS, 
			\begin{equation*}
				u(x,t)=Q(x)e^{it\tau}, \tau>0,
			\end{equation*}
			where $Q$ satisfies the ground state equation $(\ref{ground})$. From Theorem 1.2 in \cite{7}, we see that the above ground state equation has a solution $Q\in \ell^{2}(\Z^{d})$, which implies the soliton solution $u\in C^{1}(\left[0,\infty\right);\ell^{2}(\Z^{d}))$ is a classical solution. Applying the discrete Fourier transform, we can immediately show that there is no asymptotic state $u_{+}\in \ell^{2}(\Z^{d})$ for the soliton solution $u$. Combined with Theorem \ref{6}, we directly show that, for $6\le(p-1)d$, the mass of the soliton solution $u$ and the $\ell^{2}(\Z^{d})$-norm of the ground state have a uniformly lower bound as follows
			\begin{equation*}
				\|u(\cdot,t)\|_{\ell^{2}(\Z^{d})}=\|Q\|_{\ell^{2}(\Z^{d})}\ge \varepsilon=\varepsilon(d)>0,
			\end{equation*}
			where the lower bound $\varepsilon$ is independent of nonlinearity order $p$. Particularly, it also serves as a complementary statement of \cite{35}, in which M. Weinstein has proved that for $(p-1)d<4$, there is no energy excitation threshold for every standing wave solution to the focusing DNLS (\ref{NLS}). To be more concrete, \cite{35} shows that for any $v>0$, there exists a ground state $Q$, such that $\|Q\|_{\ell^{2}(\Z^{d})}=v$. In conclusion, for the focusing case, it's necessary to pose an upper bound for the $\ell^{2}(\Z^{d})$-norm of the initial data $u_{0}$, ensuring the asymptotic completeness of the corresponding $u$. Naturally, there is a problem of finding the threshold of scattering for the focusing case, which was solved in the continuous background, see \cite{8}, but is still unsolved in the discrete background.
		\end{rem}
		\begin{rem}
			There still remains a tough problem of establishing the scattering theory for the solutions with large initial data in the defocusing case. The challenge stems from two inadequacies. The first is the absence of a strong decay estimate, like the Morawetz estimate in \cite{9,10}, to remove the requirement of norm boundedness (\ref{cc}). The second is the insufficiency of the distinction between the focusing case and the defocusing case. For instance, in continuous background, the positivity of conserved energy for the defocusing case can control the $H^{1}(\mathbb{R}^{d})$-norm of the global solution $u$, which surpasses the $L^{2}(\mathbb{R}^{d})$-norm control provided by the mass conservation. However, in discrete background, $H^{1}(\Z^{d})$-norm is equivalent to $\ell^{2}(\Z^{d})$-norm, meaning that the positivity of conserved energy fails to provide more control than conserved mass. 
		\end{rem}
		
		\section{Some extended results}
		We first establish the long-time well-posedness of the DNLS (\ref{small data}) with the small initial data, as a direct consequence of the continuation criterion given in Theorem \ref{1}.
		
		\begin{proof}[Proof of Theorem \ref{7}]
			For simplicity, we just deal with the case when $g^{jk}(u,\partial u)=g^{jk}(u)$ and $F(u,\partial u)=F(u)$, as the general case is parallel. Based on the continuation criterion, it suffices to show the total mass $M(t)=\|u(\cdot,t)\|_{\ell^{2}(\Z^{d})}$ is bounded in the time interval $\left[0,K\log(\log(\frac{1}{\varepsilon}))\right]$. 
			\par 
			Notice that, with the continuity method, if for a time interval $[0,T]$, there exists $R>0$ s.t. $M(0)<\frac{R}{2}$ and the property(P) that $\forall t\in [0,T]$, $M(t)\le R$ would imply $M(t)\le \frac{R}{2}$, then we can obtain $M(t)\le R, \forall t\in [0,T]$. Therefore, the energy $M(t)$ is bounded in $[0,T]$, which will ensure the existence of a classical solution in $[0,T]$. Thus, in the following, we choose $A$ satisfying $\|u_{0}\|_{\ell^{2}(\Z^{d})}\ll A$(to ensure $M(0)<\frac{A\varepsilon}{2}$) and  $R=A\varepsilon=\frac{1}{10}$ (this can be done if $\varepsilon$ is small enough). The whole theorem comes to show that for $T\lesssim \log(\log(\frac{1}{\varepsilon}))$, the property(P) is true on $[0,T]$.
			\par For convenience, we introduce the following constant 
			\begin{equation*}
				M:=\max \left\{\max_{j,k;|x|\le1}|g^{jk}(x)|,\max_{|x|\le1}|\nabla F(x)|, 1, d\right\}.
			\end{equation*}
			Then, from the hypothesis $M(t)\le A\varepsilon=\frac{1}{10}$, we immediately have the estimates below
			\begin{equation}\label{necessary}
				\|g^{jk}(u(\cdot,t))\|_{\ell^{\infty}(\Z^{d})}\lesssim M, \quad \|F(u(\cdot,t))\|_{\ell^{2}(\Z^{d})}\le M\|u(\cdot,t)\|_{\ell^{2}(\Z^{d})}=M\cdot M(t).
			\end{equation}
			Besides, applying the energy estimate in Theorem \ref{energy estimate2}, we obtain
			\begin{equation}\label{rr}
				M(t)\le C\left(M(0)+\int_{0}^{t}\|F(u(\cdot,t))\|_{\ell^{2}(\Z^{d})}ds\right)\cdot \exp\left(C'\int_{0}^{t}\sum_{j,k}\|g^{jk}(u(\cdot,s))\|_{\ell^{\infty}(\Z^{d})}ds\right).
			\end{equation}
			Combining (\ref{necessary}) and (\ref{rr}), we can further get
			\begin{equation*}
				M(t)\le Ce^{C'MT}\left(M(0)+\int_{0}^{t}M\cdot M(s)ds\right)\le CMe^{C'MT}\left(M(0)+\int_{0}^{t}M(s)ds\right).
			\end{equation*}
			We denote $F:=CMe^{C'MT}$, then the Gronwall inequality directly implies
			\begin{equation*}
				M(t)\le Fe^{Ft}M(0)\le  Fe^{FT}M(0).
			\end{equation*}
			Notice that we have the following equivalences
			\begin{equation*}
				Fe^{FT}M(0)\le \frac{A\varepsilon}{2}\Longleftrightarrow Fe^{FT}\lesssim \frac{A}{2}\Longleftrightarrow T\lesssim\log(\log(A))\approx\log(\log(\frac{1}{\varepsilon})).
			\end{equation*}
			Thus, we complete the proof of property(P), which ensures the existence of a classical solution in the time interval $\left[0,K\log(\log(\frac{1}{\varepsilon}))\right]$.
		\end{proof}
		If we consider the traditional case with $g^{jk}\equiv \frac{1}{2}\delta_{jk}$, then we can derive a better lower bound for the maximal existence time.
		\begin{thm}
			If $g^{jk}\equiv \frac{1}{2}\delta_{jk}$, then we have $T^{\ast}\ge K\log(\frac{1}{\varepsilon})$, where $K=K(F,u_{0},d)>0$.
		\end{thm}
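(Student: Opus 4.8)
The plan is to mimic the continuity-method argument of Theorem~\ref{longtime}, but to replace the use of the exponentially weighted energy estimate of Theorem~\ref{energy estimate2} by the sharper estimate of Theorem~\ref{energy estimate1}, which is available precisely because now $g^{jk}\equiv\frac12\delta_{jk}$, so that equation~(\ref{small data}) reduces to the inhomogeneous equation~(\ref{inhomogeneous}) with forcing term $F(u,\partial u)$. As in the proof of Theorem~\ref{longtime} I would first reduce to the case $F=F(u)$, the general case being parallel, and I would work only with the $\ell^{2}(\Z^{d})$-norm $M(t):=\|u(\cdot,t)\|_{\ell^{2}(\Z^{d})}$: since $\|u(\cdot,t)\|_{\ell^{\infty}(\Z^{d})}\le M(t)$, the continuation criterion in Theorem~\ref{1} tells us it suffices to bound $M(t)$ on the interval $[0,K\log(1/\varepsilon)]$, and this handles the $\ell^{2,1}(\Z^{d})$ case as well since $\ell^{2,1}(\Z^{d})\subseteq\ell^{2}(\Z^{d})$.

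Next I would run the continuity method. Fix a radius $R>0$, to be chosen as $R=A\varepsilon$ with $A>0$ so large that $M(0)=\varepsilon\|u_{0}\|_{\ell^{2}(\Z^{d})}<R/2$ and so that $R\le 1$; concretely one may take $R=\tfrac{1}{10}$, i.e. $A=\tfrac{1}{10\varepsilon}$, which is legitimate once $\varepsilon$ is small. On a time interval $[0,T]$ on which the a priori bound $M(t)\le R$ holds, the equation is exactly~(\ref{inhomogeneous}) with $F(x,t)=F(u(x,t))$, so Theorem~\ref{energy estimate1} (with $k=0$, and note the crucial absence of any exponential factor) gives
\[
M(t)\lesssim M(0)+\int_{0}^{t}\|F(u(\cdot,s))\|_{\ell^{2}(\Z^{d})}\,ds .
\]
Since $F\in C^{1}$, $F(0)=0$ and $\|u(\cdot,s)\|_{\ell^{\infty}(\Z^{d})}\le R\le 1$, we have $\|F(u(\cdot,s))\|_{\ell^{2}(\Z^{d})}\lesssim_{F}M(s)$ (if $F$ also depends on $\partial u$, one additionally uses $\|\partial u\|_{\ell^{2}(\Z^{d})}\lesssim\|u\|_{\ell^{2}(\Z^{d})}$ from Theorem~2.14 in \cite{26}), hence $M(t)\lesssim M(0)+c\int_{0}^{t}M(s)\,ds$ for a constant $c=c(F,d)$ independent of $\varepsilon$, and Gronwall yields $M(t)\le C_{\ast}M(0)e^{c_{\ast}t}$ with $C_{\ast},c_{\ast}$ depending only on $F$ and $d$.

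Finally I would close the bootstrap. The bound $M(t)\le C_{\ast}M(0)e^{c_{\ast}t}=C_{\ast}\varepsilon\|u_{0}\|_{\ell^{2}(\Z^{d})}e^{c_{\ast}t}$ is $\le R/2=A\varepsilon/2$ as soon as $e^{c_{\ast}t}\le A/(2C_{\ast}\|u_{0}\|_{\ell^{2}(\Z^{d})})$, that is $t\le \tfrac{1}{c_{\ast}}\log\!\big(A/(2C_{\ast}\|u_{0}\|_{\ell^{2}(\Z^{d})})\big)$; since $A=\tfrac{1}{10\varepsilon}$ the right-hand side is $\approx \tfrac{1}{c_{\ast}}\log(1/\varepsilon)$ for $\varepsilon$ small. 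Thus property (P) of the continuity method — that $M(t)\le R$ on $[0,T]$ forces $M(t)\le R/2$ on $[0,T]$ — holds for every $T\le K\log(1/\varepsilon)$ with $K<1/c_{\ast}$, so $M(t)\le R$ on $[0,K\log(1/\varepsilon)]$, and the continuation criterion of Theorem~\ref{1} gives $T^{\ast}\ge K\log(1/\varepsilon)$ with $K=K(F,u_{0},d)$. There is no deep obstacle here: the only real point is the observation that in the flat case $g^{jk}\equiv\frac12\delta_{jk}$ one may invoke Theorem~\ref{energy estimate1} instead of Theorem~\ref{energy estimate2}, which removes the extra $\exp(C'MT)$ factor responsible for the double logarithm in Theorem~\ref{longtime} and leaves only a single exponential in $t$, hence a lifespan of order $\log(1/\varepsilon)$.
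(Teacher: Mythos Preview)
Your proposal is correct and follows essentially the same approach as the paper: the paper's proof is a terse two-sentence remark that one reruns the continuity method of Theorem~\ref{longtime} with the energy estimate of Theorem~\ref{energy estimate1} (which for $k=0$ has no exponential factor) in place of Theorem~\ref{energy estimate2}, then applies Gronwall to obtain $M(t)\le Me^{Mt}M(0)$ and closes the bootstrap at $T\approx\log(1/\varepsilon)$. Your write-up is more detailed but the strategy is identical.
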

		\begin{proof}
			We still follow the continuity method in the proof of Theorem \ref{7}, with the energy estimate in Theorem \ref{energy estimate1} instead of the energy estimate in Theorem \ref{energy estimate2}. Then we similarly choose $A$ such that $A\varepsilon=\frac{1}{10}$ to ensure the (\ref{necessary}) and can obtain 
			\begin{equation*}
				M(t)\le M\left(M(0)+\int_{0}^{t}M(s)ds\right).
			\end{equation*}
			Applying the Gronwall inequality, we can immediately derive the lower bound $T^{\ast}\ge K\log(\frac{1}{\varepsilon})$.
		\end{proof}
		Before we give the proof of Theorem \ref{8}, we shall introduce some useful lemmas.
		\begin{lemma}\label{product}
			For $a,b\in \left(0,1\right)\cup (1,+\infty)$, $1\ll t$, we have the following equivalence
			\begin{equation*}
				\int_{0}^{t}\frac{1}{\langle s \rangle^{a}}\frac{1}{\langle t-s \rangle^{b}}ds\approx\frac{1}{\langle t \rangle^{\min\left\{a,\; b,\; a+b-1\right\}}}.
			\end{equation*}
			\begin{proof}
				Splitting the above integral into the following three parts
				\begin{equation*}
					\int_{0}^{t}\frac{1}{\langle s \rangle^{a}}\frac{1}{\langle t-s \rangle^{b}}ds=\int_{0}^{1}+\int_{t-1}^{t}+\int_{1}^{t-1}:=(i)+(ii)+(iii).
				\end{equation*}
				For $(i)$ and $(ii)$, we can directly deduce that
				\begin{equation*}
					(i)\approx\int_{0}^{1}\frac{1}{\langle t-s \rangle^{b}}ds\approx\frac{1}{\langle t \rangle^{b}}, \quad (ii)\approx\int_{t-1}^{t}\frac{1}{\langle s \rangle^{a}}ds\approx\frac{1}{\langle t \rangle^{a}}.
				\end{equation*}
				For $(iii)$, we can change the variable $s:=t\xi$ and still split the integral into three parts
				\begin{equation*}
					(iii)\approx\frac{1}{t^{a+b-1}}\int_{\frac{1}{t}}^{\frac{t-1}{t}}\frac{1}{\xi^{a}}\frac{1}{(1-\xi)^{b}}d\xi=\frac{1}{t^{a+b-1}}\left(\int_{\frac{1}{t}}^{1/4}+\int_{\frac{1}{4}}^{\frac{3}{4}}+\int_{\frac{3}{4}}^{\frac{t-1}{t}}\right).
				\end{equation*}
				Similarly, we can derive the desired estimate.
			\end{proof}
		\end{lemma}
		Next, we follow the notation in the statement of Theorem \ref{8}, where $\sigma:=\frac{p-1}{2}$ and $q:=\frac{2d(2\sigma+1)}{d(2\sigma+1)+3}$.
		\begin{lemma}\label{1011}
			For $\frac{3}{d}\le\frac{\sigma(2\sigma+1)}{\sigma+1}$, $r\in \left[q',2(\sigma+1)\right]$, there exists $\varepsilon=\varepsilon(d)>0$, such that if the initial data $u_{0}$ satisfies the smallness condition $\|u_{0}\|_{\ell^{\frac{2(\sigma+1)}{2\sigma+1}}(\Z^{d})}\le \varepsilon$, then we have the following decay estimate for the corresponding solution $u$
			\begin{equation}\label{decay estimate2}
				\|u(\cdot,t)\|_{\ell^{r}(\Z^{d})}\lesssim\frac{1}{\langle t\rangle^{\frac{d(r-2)}{3r}}}\|u_{0}\|_{\ell^{r'}(\Z^{d})}.
			\end{equation}
		\end{lemma}
		\begin{proof}
			We first consider the special case $\frac{3}{d}<\frac{\sigma(2\sigma+1)}{\sigma+1}$ and $r\in \left(q',2(\sigma+1)\right]$. For fixed $r$, we define the norm $\|\cdot\|_{A}$ and the closed set $X$ for $u:\Z^{d}\times \left[0,+\infty\right)\to \mathbb{C}$ as follows
			\begin{equation*}
				\|u\|_{A}:=\|\langle t\rangle^{\frac{d(r-2)}{3r}} u\|_{L_{t}^{\infty}\ell_{x}^{r}(\left[0,+\infty\right)\times \Z^{d})},\quad X := \left\lbrace u \Big| \|u\|_{A}\le 2C \|u_{0}\|_{\ell^{r'}(\Z^{d})} \right\rbrace,
			\end{equation*}
			where $C$ is the constant from the following interpolation inequality for $v:\Z^{d}\to \mathbb{C}$
			\begin{equation}\label{decay}
				\|e^{it\Delta/2}v\|_{\ell^{r}(\Z^{d})}\le \frac{C}{\langle t\rangle^{\frac{d(r-2)}{3r}}}\|v\|_{\ell^{r'}(\Z^{d})}, \forall r\ge2.
			\end{equation}
			Next, we define the map $T$ as follows
			\begin{equation*}
				Tu:= e^{it\Delta/2}u_{0}\pm i\int_{0}^{t}e^{i(t-s)\Delta/2}|u(\cdot,s)|^{2\sigma}u(\cdot,s)ds.
			\end{equation*}
			To apply the contraction mapping principle, we just need to check that $T$ maps $X$ to $X$, and $T$ has the contractivity. For $\varepsilon$ sufficiently small and $u\in X$, we can apply the above interpolation inequality (\ref{decay}), Lemma \ref{product} and the Minkowski inequality to deduce that
			\begin{equation*}
				\|Tu(\cdot,t)\|_{\ell^{r}(\Z^{d})}\le C\|u_{0}\|_{\ell^{r'}(\Z^{d})}\frac{1}{\langle t\rangle^{\frac{d(r-2)}{3r}}}+C\int_{0}^{t}\|u(\cdot,s)\|_{\ell^{(2\sigma+1)r'}}^{2\sigma+1}\frac{1}{\langle t-s\rangle^{\frac{d(r-2)}{3r}}}ds
			\end{equation*}
			\begin{equation*}
				\le C\|u_{0}\|_{\ell^{r'}(\Z^{d})}\frac{1}{\langle t\rangle^{\frac{d(r-2)}{3r}}}+C(2C)^{2\sigma+1}\|u_{0}\|_{\ell^{r'}(\Z^{d})}^{2\sigma+1}\int_{0}^{t}\frac{1}{\langle t-s\rangle^{\frac{d(r-2)}{3r}}}\frac{1}{\langle s\rangle^{\frac{(2\sigma+1)d(r-2)}{3r}}}ds
			\end{equation*}
			\begin{equation*}
				\le 2C\|u_{0}\|_{\ell^{r'}(\Z^{d})}\frac{1}{\langle t\rangle^{\frac{d(r-2)}{3r}}}.
			\end{equation*}
			Note that the condition $r\in \left(q',2(\sigma+1)\right]$ ensures $(2\sigma+1)r'\ge r,\; \frac{(2\sigma+1)d(p-2)}{3p}>1$. Thus, it remains to show the contractivity of $T$. For $u,v\in X$,
			\begin{equation*}
				\|Tu-Tv\|_{A}\le C\int_{0}^{t}\||u(\cdot,s)|^{2\sigma}u(\cdot,s)-|v(\cdot,s)|^{2\sigma}v(\cdot,s)\|_{\ell^{r'}(\Z^{d})}\frac{\langle t\rangle^{\frac{d(r-2)}{3r}}}{\langle t-s\rangle^{\frac{d(r-2)}{3r}}}ds
			\end{equation*}
			\begin{equation*}
				\lesssim \int_{0}^{t}\|u(\cdot,s)-v(\cdot,s)\|_{\ell^{r}(\Z^{d})}\left(\|u(\cdot,s)\|_{\ell^{r}(\Z^{d})}^{2\sigma}+\|v(\cdot,s)\|_{\ell^{r}(\Z^{d})}^{2\sigma}\right)\frac{\langle t\rangle^{\frac{d(r-2)}{3r}}}{\langle t-s\rangle^{\frac{d(r-2)}{3r}}}ds
			\end{equation*}
			\begin{equation*}
				\lesssim \|u_{0}\|_{\ell^{r'}(\Z^{d})}^{2\sigma}\cdot\|u-v\|_{A}\le \frac{1}{2}\|u-v\|_{A}.
			\end{equation*}
			For the general case, we note that the implicit constant in the decay estimate (\ref{decay estimate2}) is independent of $r$. Taking the limit $r\to q'$, we can immediately conclude the whole theorem. 
		\end{proof}
		\begin{rem}\label{1012}
			This proof is inspired by \cite{5} and partially answers the M. Weinstein's conjecture in \cite{35}. To be more specific, Lemma \ref{1011} shows that, under some smallness condition, the solution of the DNLS will disperse to zero, in the sense that for $r\ge q'>2$, 
			\begin{equation*}
				\|u(\cdot,t)\|_{\ell^{r}(\Z^{d})}\to 0, \; t\to +\infty.
			\end{equation*}
			Additionally, the conditions $r\in \left[q',2(\sigma+1)\right]$ and $\|u_{0}\|_{\ell^{\frac{2(\sigma+1)}{2\sigma+1}}(\Z^{d})}\le \varepsilon$ in Lemma \ref{1011} can be respectively replaced by $r\in \left[q',\tau\right]$ and $\|u_{0}\|_{\ell^{\tau'}(\Z^{d})}\le \varepsilon$, for any $\tau\in \left[q',2(\sigma+1)\right]$. In particular, we can take $\tau=q'$, which is crucial in the proof of Theorem \ref{8}. 
		\end{rem}
		Now we can immediately prove Theorem \ref{8}.
		\begin{proof}[Proof of Theorme \ref{8}]
			Combined with the analysis in Remark \ref{1012} and Remark \ref{10122}, we can directly summarize the uniformly lower bound property of the ground state as the three tables in Theorem \ref{8}.
		\end{proof}

		\newpage
		\section*{Acknowledgement}
		The author is supported by NSFC, No. 123B1035 and is grateful to Prof. B. Hua for helpful suggestions.
		
		\section*{Conflict of interest statement}
		The author does not have any possible conflict of interest.
		
		\section*{Data availability statement}
		The manuscript has no associated data.
		\bigskip
		\bigskip

		\bibliographystyle{alpha}
		\bibliography{second}
		
	\end{document}